\documentclass[a4paper,10pt]{article}
\usepackage{geometry}                % See geometry.pdf to learn the layout options. There are lots.
\usepackage[active]{srcltx}
\usepackage{hyperref}
\usepackage[normalem]{ulem}

\usepackage{amsmath}
\usepackage{amssymb}
\usepackage{parskip}
\usepackage{mathtools}
\usepackage{fullpage}
\usepackage{mathrsfs}
\usepackage{amsthm}
\usepackage{physics}
\usepackage{bbm}
\usepackage[utf8]{inputenc}
\usepackage{caption}
\usepackage{subcaption}
\usepackage{graphicx}
\usepackage{attachfile}
\usepackage{navigator}
\usepackage{theoremref}
\usepackage{faktor}
\usepackage{musicography}
\usepackage{accents}
\usepackage{tikz}
\usetikzlibrary{angles,quotes}

\newtheorem{theorem}{Theorem}[section]
\newtheorem{maintheorem}{Theorem}

\newtheorem{definition}[theorem]{Definition}
\newtheorem{proposition}[theorem]{Proposition}
\newtheorem{corollary}[theorem]{Corollary}
\newtheorem{lemma}[theorem]{Lemma}
\newtheorem{remark}[theorem]{Remark}

\newtheorem{assumption}[theorem]{Assumption}

\numberwithin{equation}{section}

\newcommand{\R}{\mathbb{R}}
\newcommand{\Q}{\mathbb{Q}}

\newcommand{\N}{\mathbb{N}}

\newcommand{\e}{\varepsilon}
\newcommand{\p}{\varphi}

\newcommand{\diag}{\mathrm{diag}}

\newcommand{\vol}{\mathrm{vol}}

\newcommand{\cd}{\mathsf{CD}}

\newcommand{\sfd}{\mathsf d}
\newcommand{\di}{{\rm d}}

\newcommand{\mm}{\mathfrak{m}}

\newcommand{\LM}[1]{\hbox{\vrule width.2pt \vbox to#1pt{\vfill \hrule
width#1pt height.2pt}}}
\newcommand{\LL}{{\mathchoice
{\,\LM7\,}{\,\LM7\,}{\,\LM5\,}{\,\LM{3.35}\,}}}

\title{Infinitesimal Minkowskianity for manifolds with continuous Lorentzian metrics}
\author{Vanessa Ryborz \thanks{{  Mathematical Institute, University of Oxford, UK. email: {\tt ryborz@maths.ox.ac.uk}}}}
\date{}

\begin{document}
\maketitle
\begin{abstract}
 We prove that any metric measure spacetime arising from a smooth manifold $M$ endowed with a continuous Lorentzian metric $g$ is infinitesimally Minkowskian, under the assumption that $(M, g)$ is causally simple.
 \let\thefootnote\relax\footnotetext{\textit{MSC2020:} 53C50, 53C23, 53B30 \\
 \textit{Keywords}.  Lorentzian manifolds of low regularity, metric measure spacetimes,  Lorentzian length spaces, causal functions}
\end{abstract}
\tableofcontents

{\small \textbf{Acknowledgements.} 
The author gratefully acknowledges Andrea Mondino for his supervision and guidance, and thanks Clemens Sämann for valuable discussions. The author is supported by a postgraduate scholarship from the Mathematical Institute at the University of Oxford. 
}

\section{Introduction}

The geometry of metric (measure) spaces is an active field of study  across analysis, metric and differential geometry, and probability.
Metric measure spaces can be seen as a generalisation of smooth Riemannian manifolds, which turn out to carry enough information  to allow generalisations of several core notions from differential geometry. These include upper and lower sectional curvature bounds, which can be defined on length spaces (see for instance \cite{alexander2024alexandrov}), the celebrated curvature dimension condition $\cd(K, N)$ based on optimal transport introduced by Lott-Villani \cite{lott2009ricci, LottVillaniJFA} and Sturm \cite{sturm2006geometryI, sturm2006geometryII} (a synthetic definition of lower Ricci curvature and upper dimension bounds), and a generalised first order calculus (see \cite{ambrosio2014inventio}). 

Analogous approaches were recently started in Lorentzian signature: After the pioneering work by Kronheimer and Penrose \cite{kronheimer1967structure} and Busemann \cite{busemann1967timelike}, Kunzinger and S\"amann \cite{kunzinger2018lorentzian} proposed the notion of Lorentzian {(pre-)}length spaces, a synthetic approach to Lorentzian geometry. They also studied timelike sectional curvature bounds through timelike triangle comparison. 

A burst of activity in the field of non-smooth Lorentzian geometry followed: 
Cavalletti and Mondino \cite{cavalletti2020optimal} developed optimal transport tools and a theory of synthetic timelike lower Ricci curvature bounds  for Lorentzian length spaces (building on \cite{mccann2018displacement} and \cite{mondino2022optimal} that treated the case of smooth Lorentzian manifolds), together with a synthetic Hawking singularity theorem. Mondino and Suhr proposed a synthetic definition of upper timelike Ricci curvature bounds leading to a synthetic formulation of the Einstein equations in \cite{mondino2022optimal}.
In the flavour of the celebrated singularity theorems on smooth spacetimes, main developments in the non-smooth case include the study of cones and related singularity theorems \cite{alexander2019generalized}, the relationship between spacetime inextendability and synthetic curvature blow-up established in \cite{grant2019inextendibility}, singularity theorems under variable timelike lower Ricci curvature bounds \cite{BM-MEMS}, and an extension of the Penrose singularity theorem to continuous spacetimes satisfying a synthetic null energy condition \cite{CMM-Synthetic-2025}.
Further developments include a Lorentzian splitting theorem  for Lorentzian length spaces with non-negative timelike sectional curvature \cite{beran2023splitting}, a definition of Lorentzian Hausdorff measures and dimension \cite{mccann2022lorentzian}, the study of time functions on Lorentzian length spaces \cite{burtscher2025time}, a definition of timelike lower Ricci curvature bounds on Finsler spacetimes \cite{braun2024optimal}, and a new Lorentzian isoperimetric-type inequality under timelike lower Ricci curvature bounds \cite{CM:LorentzianIsop}.

In \cite{beran2024nonlinear}, the authors introduce the notion of  metric measure spacetimes, a framework largely consistent with the works by Kunzinger-Sämann, Cavalletti-Mondino, and Mondino-Suhr. In this setting, they  prove a d'Alembertian comparison (see also \cite{braun2024exact}) and develop a Lorentzian differential first order calculus.   
Based on that, they define a synthetic criterion, called \emph{infinitesimal Minkowskianity}, to distinguish those metric measure spacetimes whose time separation arises from a Lorentzian inner product from Lorentz-Finsler structures.  This criterion  can be seen as a Lorentzian counterpart of infinitesimal Hilbertianity, which was introduced in \cite{AGS14, gigli2015differential} for metric measure spaces and which provided a natural framework  for remarkable structural results \cite{MN-JEMS2019, BS-CPAM2020} when combined with Lott-Sturm-Villani's $\cd(K,N)$ condition.

A vast class of metric measure spacetimes arises from manifolds $M$ endowed with non-smooth Lorentzian metrics $g$.
They have been studied in \cite{chrusciel2012lorentzian, samann2016global, graf2020singularity, calisti2025hawking}. Such manifolds serve to describe various phenomena from general relativity, examples being the inside or outside of a star, spacetimes with conical singularities, and cosmic strings (\cite{vickers1990quasi, vickers2000generalized}). 
In this paper, we study the case that $g$ is continuous. A physically relevant example is provided by Penrose's impulsive gravitational waves (see for instance \cite{penrose1972geometry} and \cite[Chapter 20]{griffiths2009exact}).

Both metric measure spacetimes and manifolds endowed with continuous Lorentzian metrics can be seen as generalisations of smooth Lorentzian manifolds, hence a naturally emerging question is whether these two generalisations are compatible. In Riemannian signature, such compatibility questions were studied in \cite{burtscher2012length, kunzinger2022synthetic, mondino2025equivalence, kunzinger2025ricci, eros2025distributional}. 
The goal of this paper is to compare the classical first order calculus on manifolds with the first order calculus on metric measure spacetimes developed in \cite{beran2024nonlinear}. This work can be seen as a Lorentzian counterpart to \cite{burtscher2012length} and \cite[Section 4]{mondino2025equivalence}.

The main result of this paper is:

\begin{maintheorem}\label{mainthm}
    Let $M$ be a $(d+1)$-dimensional manifold and $g$ be a continuous Lorentzian metric on $M$ such that $(M, g)$ is a causally simple spacetime.
    Then the metric measure spacetime $(M, \ell_g, \vol_g)$ is infinitesimally Minkowskian. 
\end{maintheorem}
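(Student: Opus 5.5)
The strategy follows the Riemannian template of Burtscher and of Mondino--Ryborz: identify the metric-measure-spacetime first order calculus of \cite{beran2024nonlinear} with the classical calculus on $(M,g)$. Then read off the quadratic (Minkowskian) structure from the pointwise Lorentzian inner product $g_x$.

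Recall that in \cite{beran2024nonlinear} infinitesimal Minkowskianity asks that the $p$-energy (or $p$-d'Alembertian) be linear. Concretely, for causal functions $f_1,f_2$ the ``maximal weak subslope'' $|\di f|$ must satisfy the parallelogram identity
\[
|\di(f_1+f_2)|^2+|\di(f_1-f_2)|^2=2|\di f_1|^2+2|\di f_2|^2,
\]
in the appropriate reverse sense and on suitably steep/causal functions. The polarisation then gives a bilinear pairing.

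\textbf{Step 1 (local time separation).} I will show that the time separation $\ell_g$ of a continuous causally simple metric is infinitesimally governed by $g$. Around each point, $g$ is squeezed between two smooth flat metrics $\check g_\e \prec g\prec \hat g_\e$ with narrower and wider cones. By the Chru\'sciel--Grant / S\"amann approximation, $\ell_g$ is then locally comparable to the Minkowskian time separations of $g_x$, up to a factor $1\pm\e$. Causal simplicity gives closedness of $J^+$ and continuity of $\ell_g$ (or at least lower semicontinuity plus finiteness), which the framework of \cite{beran2024nonlinear} requires.

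\textbf{Step 2 (slopes of causal Lipschitz functions).} For a locally Lipschitz causal function $f$, I will prove the identity
\[
|\di f|(x)=\sqrt{-g_x(\nabla f,\nabla f)}\quad \vol_g\text{-a.e.},
\]
where $|\di f|$ is the metric-measure-spacetime subslope (maximal weak subslope). The inequality ``$\ge$'' comes from the pointwise reverse Lipschitz slope. One compares along $\ell_g$-geodesic-like short timelike segments, using Step 1 and Rademacher's theorem, and then passes to the minimal relaxation via $\vol_g$-a.e. differentiability. The reverse inequality uses test plans: timelike curves concentrating along the maximising direction $-\nabla f$. One shows that the relaxation of the pointwise slope coincides with the pointwise slope itself, by lower semicontinuity of $f\mapsto \int \sqrt{-g(\nabla f,\nabla f)}^p$ along causal convergence. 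This holds because the integrand is concave on the future cone, by the reverse triangle inequality, so the relevant functional is upper semicontinuous in the right direction.

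\textbf{Step 3 (conclusion).} Since $g_x$ is a Lorentzian inner product, the expression $-g_x(\nabla f,\nabla f)$ is a quadratic form on the cone of causal covectors. The parallelogram law then holds a.e. for causal $f_1,f_2$ with $f_1\pm f_2$ causal. The version required in \cite{beran2024nonlinear}, phrased via the $p$-energy being quadratic, follows by integration.

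The main obstacle is Step 2, specifically the relaxation argument under only continuous $g$. Test plans and geodesics may be badly behaved (no uniqueness, branching of causal curves), and approximation by smooth metrics must be done uniformly. I would first try to work with cone approximations $\check g_\e,\hat g_\e$ chosen so that $\ell_{\check g_\e}\le \ell_g\le \ell_{\hat g_\e}$ locally. The next step is to prove the slope identity for each smooth approximant, where it is classical. Finally, letting $\e\to 0$ sandwiches the slope of $f$ for $g$ between quantities converging to $\sqrt{-g(\nabla f,\nabla f)}$.
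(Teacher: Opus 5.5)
\textbf{Gap: Step 2 only covers Lipschitz causal functions.} Infinitesimal Minkowskianity quantifies over \emph{all} causal functions $f\colon M\to\overline{\R}$. These are merely order preserving. They may jump, being continuous only off a countable union of achronal sets, and they may take the values $\pm\infty$.

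You give no way to pass from Lipschitz causal functions to general ones, and the mechanism you gesture at is not available as stated. The maximal weak subslope is defined directly by the test-plan inequality, not as a relaxation of pointwise slopes. So semicontinuity of $\int\sqrt{-g(\nabla f,\nabla f)}^p$ says nothing about it unless you also have an approximation theorem for causal functions, an analogue of the Ambrosio--Gigli--Savar\'e density results used in the Riemannian case. You neither state nor prove one.

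What is needed is a direct identification, for every causal $f$: $|\di f|=|\nabla_g f|_g$ where $f$ is finite and differentiable, and $|\di f|=+\infty$ elsewhere. The paper proceeds as follows.
\begin{itemize}
\item In suitable local coordinates a causal $f$ is monotone in each variable. Hence it is locally BV and a.e.\ differentiable, with $\di f$ equal to the density of the absolutely continuous part of $\mathcal{D}f$.
\item The lower bound, that $|\nabla_g f|_g$ is a weak subslope, follows from three facts. $f\circ\gamma$ is monotone, so $f(\gamma_1)-f(\gamma_0)\ge\int_0^1(f\circ\gamma)'\,\di t$. The chain rule combined with reverse Cauchy--Schwarz bounds the integrand from below. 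Test plans do not charge the null set where $f$ is not differentiable.
\item The upper bound is the delicate part. Use a test plan of short parallel segments $y+tv$, with $v$ nearly saturating reverse Cauchy--Schwarz at $p$. The increment $f(\gamma_1)-f(\gamma_0)$ can exceed $\int_0^1(f\circ\gamma)'\,\di t$ by a jump/Cantor contribution. After averaging over the plan, this contribution is controlled by the $\mathcal{D}f^s(v)$-mass of the tube swept by the segments.
\item To make that defect negligible, one localises at points where $\mathcal{D}f^s(B_\delta(p))=o(\delta^{d+1})$, which holds $\mathcal{L}^{d+1}$-a.e.\ by a Besicovitch argument. One also needs the relevant densities to have Lebesgue points along the lines.
\end{itemize}
None of this appears in your plan; for Lipschitz $f$ the defect simply does not exist. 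Separately, test plans live on left-continuous causal paths, so you need their a.e.\ differentiability and $|\dot\gamma_t|=|\dot\gamma_t|_g$ a.e. Your Step 1 comparison gives the latter at differentiability points only once strong causality is used to keep near-maximising curves inside the small chart.

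\textbf{The smooth sandwich does not close the gap.} Local inequalities $\ell_{\check g_\e}\le\ell_g\le\ell_{\hat g_\e}$ give no comparison of maximal weak subslopes, because the admissible objects change with the cones. A $g$-causal function need not be $\hat g_\e$-causal, and the paths charged by $g$-test plans need not be $\check g_\e$-causal. So no identity for the approximants applies to your $f$.

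\textbf{The target is misstated.} The definition requires $2|\di(f_1+f_2)|^2+2|\di f_1|^2=|\di f_2|^2+|\di(2f_1+f_2)|^2$ $\vol_g$-a.e.\ for \emph{all} causal $f_1,f_2$; all four functions involved are automatically causal. It is not a parallelogram law with $f_1-f_2$ restricted to pairs where $f_1\pm f_2$ is causal, and it is not an integrated $p$-energy statement. Once the slope identity is known for every causal function, the correct identity follows pointwise from bilinearity of $g$ on $\{|f_1|<\infty\}\cap\{|f_2|<\infty\}$. Elsewhere both sides are $+\infty$, so it holds trivially.
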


Here, the causal simplicity is an assumption of technical nature as we merely use strong causality in our proofs, see Remark \ref{remark_causal_simplicity}.

The proof of Theorem \ref{mainthm} essentially consists of two steps: 
\begin{itemize}
    \item We verify that for any sufficiently regular causal curve $\gamma\colon [0,1] \to M$, and almost every $t \in [0,1]$, the causal speed $|\dot{\gamma}_t|$ which is almost everywhere given by $|\dot{\gamma}_t|=\lim_{s \searrow t} \frac{\ell(\gamma_t, \gamma_s)}{|s-t|}$ essentially coincides with the Lorentzian length of the derivative $|\dot{\gamma}_t|_g =\sqrt{-g(\dot{\gamma}_t,\dot{\gamma}_t)}$. This step is contained in Corollary \ref{causal_speed_computed}.
    \item We prove that, for a causal function $f$, the maximal weak subslope is given by $|\nabla_g f|_g$, wherever $f$ is finite and differentiable, and $+\infty$ wherever $f$ is infinite or not differentiable. The idea of the proof is similar to the proof in the Riemannian signature \cite[Proposition 4.24]{mondino2025equivalence} but in this setting an extra difficulty arises as we may work with functions that are not $C^1$. We use that causal functions are of bounded variation (see \cite{beran2024nonlinear}), so the distributional derivative of $f$ is given by a measure $\mathcal{D}f$, which potentially has a singular part $\mathcal{D}f^s$. Using the Besicovitch covering theorem allows us to suitably localise away from the singular part $\mathcal{D}f^s$ (Lemma \ref{singular_measure_lebesgue_point}) and then perform a contradiction argument in Lemma \ref{weak_subslope_identified}. 
\end{itemize}

The paper is organised as follows: In Section \ref{sec2}, we recall the synthetic Lorentzian theory from \cite{beran2024nonlinear}. Thereafter, in Section \ref{sec3}, we establish relevant measure theoretic results that will be needed in the proof of Theorem \ref{mainthm}. Finally, in Section \ref{sec4}, we recall how a manifold with a continuous Lorentzian metric enters the class of metric measure spacetimes and then compare the synthetic first order calculus to the classical manifold calculus to prove Theorem \ref{mainthm}.

\subsection*{Notation}
Given $m \in \N$ and real numbers $\lambda_1, \ldots, \lambda_m$, the expression $\mathrm{diag}(\lambda_1, \ldots, \lambda_m)$ refers to the $m$-dimensional diagonal matrix with entries $\lambda_1, \ldots, \lambda_m$. Given a matrix $A \in \R^{m \times m}$, $\norm{A}_{op}$ denotes the operator norm of $A$ induced by the Euclidean metric on $\R^m$.

Given a manifold $M$, a continuous and time orientable Lorentzian metric $g$ on $M$, and two vectors $v, w \in T_pM$ for some $p \in M$, we write $g(v,w) = \langle v, w \rangle_g = \sum_{i,j} g_{ij}v_iw_j$ for the $g$-inner product of $v$ and $w$.
Given a causal function $f$ on $M$, we denote by $|\di f|$ the maximal weak subslope.
We will denote by $\di f \in T^*M$ its differential, $\nabla_g f \in TM$ its gradient and by $|\nabla_g f|_g(=|\di f|_g) = \sqrt{-\sum_{i,j}g^{ij}\partial_if\partial_jf}$ the $g$-norm of its gradient (= the $g$-norm of its differential), wherever they exist. For a causal path $\gamma\colon [0,1] \to M$, we denote by $\dot{\gamma}_t \in T_{\gamma_t}M$ the derivative, by $|\dot{\gamma}_t|_g = \sqrt{-g(\dot{\gamma}_t, \dot{\gamma}_t)}$ the $g$-norm of the derivative, and by $|\dot{\gamma}_t|$ the causal speed of $\gamma$ at time $t$, wherever these exist. \\
Given an open set $U \subset M$ and Lorentzian metrics $g, g'$, we say that $g \prec g'$ (or $g' \succ g$) on $U$ if for every $0 \neq v \in TU$ it holds that $g(v,v) \leq 0 \implies g'(v,v) < 0$.
\section{Synthetic Lorentzian spaces and causal functions}\label{sec2}
We will briefly summarise relevant aspects of the theory on metric (measure) spacetimes following \cite{beran2024nonlinear}. We will adapt the infinity conventions from \cite{beran2024nonlinear} which are given by
\begin{align*}
    +\infty -(+\infty) := +\infty,&\\
    -\infty -(-\infty) := +\infty,&\\
    \pm \infty +z := \pm \infty& \quad \mathrm{for}\ z \in [-\infty, +\infty],\\
    (\pm \infty) \cdot 0 := 0.
\end{align*}
Given a set $M$ and a function $\ell\colon  M \times M \to \{-\infty\} \cup [0, \infty]$, we call $\ell$ a \emph{time separation} if for all $x, y, z \in M$ it holds
\begin{align*}
   \ell(x,x) =0, \quad \min(\ell(x,y)+\ell(y,z), \ell(y,z)+ \ell(x,y)) \leq \ell(x,z).
\end{align*}
Define the transitive relations chronology $\ll$ and causality $\leq$ via
\begin{align*}
    \ll := \ell^{-1}((0, +\infty]), \ \leq := \ell^{-1}([0, +\infty]).
\end{align*}
We call a subset $A$ of $M$ \emph{achronal}, if no two points in $A$ are related by $\ll$. 
\begin{definition}
    We call $(M, \ell)$ a \emph{metric spacetime} if $\ell$ is a time separation and $\leq$ is a partial order. We call $(M, \tau, \ell)$ a \emph{Polish metric spacetime} if $\tau$ is a Polish topology on the metric spacetime $(M, \ell)$ and if $\{\ell \ge 0\}$ is a closed set in $M^2$. 
\end{definition}
\smallskip

\begin{definition}[\cite{beran2024nonlinear}, Definition 3.1]
    A function $f\colon M \to \overline{\R}$ is called a \emph{causal function} if $f$ is causally order-preserving, i.e., for all points $x, y \in M$ with $x \leq y$ it holds $f(x) \leq f(y)$. We denote by $\mathrm{Dom}(f)$ the domain of $f$, i.e., the set of all $x \in M$ such that $f(x)$ is a real number.
\end{definition}
\smallskip

\begin{definition}[\cite{beran2024nonlinear}, Definition 2.26]
    A map $\gamma\colon [0,1] \to M$ is called a \emph{left continuous causal path} if it is causal, i.e., for $s \leq t$ it holds $\gamma_s \leq \gamma_t$, and left continuous, i.e., for all $t \in (0,1]$ the limit $\gamma_{t-} := \lim_{s \nearrow t}\gamma_s$ exists and equals $\gamma_t$. The space of all left continuous causal paths is denoted $\mathrm{LCC}([0,1], M)$.
\end{definition}

\smallskip
\begin{lemma}[\cite{beran2024nonlinear}, Lemma 3.2]\label{cts_outside_achronal}
    Every causal function $f$ has a representative that is continuous outside a countable union of achronal sets. 
\end{lemma}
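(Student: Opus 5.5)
The idea is to show that the set of discontinuity points of a causal function $f\colon M \to \overline{\R}$ (in a suitable sense) can be covered by countably many achronal sets, and then to redefine $f$ on that exceptional set only where needed. First I would reduce to real-valued, bounded $f$ by composing with an increasing homeomorphism $\arctan\colon \overline{\R} \to [-\pi/2,\pi/2]$. This preserves causality and continuity, so a good representative of $\arctan\circ f$ pulls back to one of $f$. Then, for each rational pair $p<q$, I would consider the ``jump sets''
\[
A_{p,q} := \{x \in M : \text{there exist points } y \ll x \ll z \text{ arbitrarily close to } x \text{ with } f(y)\le p,\ f(z)\ge q\},
\]
or more robustly the set where the upper and lower regularisations differ by a gap straddling $(p,q)$. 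These regularisations are
\[
f^-(x) := \sup_{y \ll x} f(y), \qquad f^+(x) := \inf_{z \gg x} f(z).
\]
Causality gives $f^- \le f \le f^+$. Both $f^\pm$ are causal, and $f^-$ is lower semicontinuous and $f^+$ upper semicontinuous, provided $I^\pm$ are open; this openness has to be taken from the Polish metric spacetime structure or the setting of \cite{beran2024nonlinear}.

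The key claim is that each set $D_{p,q} := \{f^- < p < q < f^+\}$ is achronal. Indeed, if $x \ll x'$ both lie in $D_{p,q}$, then $f^+(x) \le f(w) \le f^-(x')$ for any $w$ with $x \ll w \ll x'$; such a $w$ exists by the push-up/transitivity of $\ll$. This yields $q < f^+(x) \le f^-(x') < p$, a contradiction. Moreover, the set $\{f^- < f^+\}$ is exactly $\bigcup_{p<q \in \Q} D_{p,q}$, a countable union of achronal sets.

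Outside this union we have $f^- = f = f^+$, hence the function is both upper and lower semicontinuous at such points. Continuity there follows by squeezing: semicontinuity of $f^\pm$ at $x$ together with $f^- \le f \le f^+$ everywhere gives continuity of $f$ at $x$ relative to $M$. One subtlety is whether continuity is meant relative to $M$ or to the complement. Here the squeeze gives continuity at each point of the complement, which suffices; one may even take the representative $f^-$ itself.

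The step I expect to be the main obstacle is justifying the semicontinuity of $f^\pm$ and the existence of intermediate points $w$ with $x \ll w \ll x'$ in the abstract setting of \cite{beran2024nonlinear}. These need openness of chronological futures/pasts and some interpolation property of $\ll$, which are not part of the bare definition of a metric spacetime recalled above. If these fail, I would instead work with $\le$ and define the jump sets via sequences approaching from the causal past and future, using closedness of $\{\ell\ge 0\}$ in $M^2$. A further ingredient is that the modification only happens on a negligible set, so that ``representative'' makes sense with respect to the reference measure.
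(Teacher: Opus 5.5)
The paper does not prove this statement; it imports it from \cite{beran2024nonlinear}. Your regularisation argument is the natural one, and the squeeze and rational-gap bookkeeping are fine. However, the step you flag as the main obstacle is a genuine gap, and your fallback cannot close it.

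Everything rests on $f^-(x)=\sup_{y\ll x}f(y)$ being lower semicontinuous and $f^+(x)=\inf_{z\gg x}f(z)$ being upper semicontinuous, i.e.\ on $I^+(y)$ and $I^-(z)$ being open for all $y,z$. Nothing in the definition of a Polish metric spacetime provides this. Closedness of $\{\ell\ge 0\}$ says that limits of causally related pairs remain causally related, which is the wrong direction. Regularising over causal pasts and futures instead is vacuous, since $\sup_{y\le x}f(y)=f(x)=\inf_{z\ge x}f(z)$ for causal $f$.

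In fact, without an openness hypothesis the statement as posed is false. On $M=\R^2$ let $\ell((a,b),(c,d))=c-a$ if $b=d$ and $a\le c$, and $\ell=-\infty$ otherwise, with $\mm=\mathcal{L}^2$. This is a Polish metric spacetime, and $f(a,b)=\mathbbm{1}_{\{b\ge 0\}}$ is causal. Every $\mm$-representative of $f$ is discontinuous at every point of $\R\times\{0\}$. Yet any two points of this line are chronologically related, so an achronal set meets it at most once, and countably many achronal sets cannot cover it.

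The same happens in the setting of Section~\ref{sec4}. Take the continuous, globally hyperbolic (hence causally simple) metric on $\R^2$ with $g(v,v)=(\dot x+3\dot t)(\dot x-b(x)\dot t)$ for $v=\dot t\,\partial_t+\dot x\,\partial_x$, where $b(x)=-\mathrm{sgn}(x)\min(|x|^{1/2},1)$. Future causal curves satisfy $\dot x\le b(x)\dot t\le 0$ in $\{x>0\}$, so $\{x\le 0\}$ is a future set. On the other hand, any two distinct points of $\{x=0\}$ are $\ll_g$-related: leave along a timelike segment into $\{x<0\}$, return along the null curve $\dot x=|x|^{1/2}$ (which reaches $x=0$ in finite time), then follow the null line $\{x=0\}$. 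Hence $\{\ell_g(p,\cdot)>0\}$ need not be open, and $\mathbbm{1}_{\{x\le 0\}}$ is again a counterexample. Openness of $I^\pm(x)$ (e.g.\ from lower semicontinuity of $\ell$) must therefore be an explicit hypothesis; it cannot be extracted from the structure at hand.

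Under that hypothesis the argument goes through. You can also drop the interpolation property of $\ll$ that you invoked, which is likewise not available in general. Cover $\{f^-<f^+\}$ by the sets $\{f^-<p<q<f\}$ and $\{f<p<q<f^+\}$ with $p<q$ rational. If $x\ll x'$ then $f(x)\le f^-(x')$ and $f^+(x)\le f(x')$, which makes each of these sets achronal directly.

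Finally, no new representative is needed, since $f$ itself is continuous off the union. Replacing $f$ by $f^-$ is not justified in general: $\{f^-\neq f\}$ is a countable union of achronal sets, and such sets need not be $\mm$-null.
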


\smallskip
\begin{definition}[\cite{beran2024nonlinear}, Theorem 2.23 - Definition 2.25]\label{def_causal_speed}
    Let $\gamma\colon  [0,1] \to M$ be a causal path. Then there exists a maximal Borel measure $|\dot{\boldsymbol{\gamma}}|$ on $[0,1]$ such that for all $0 \leq s \leq t \leq 1$, it holds $|\dot{\boldsymbol{\gamma}}|([s,t]) \leq \ell(\gamma_s, \gamma_t)$. Denote its Lebesgue decomposition with respect to $\mathcal{L}^1$ as $|\dot{\boldsymbol{\gamma}}| = |\dot{\gamma}|\mathcal{L}^1 + |\dot{\boldsymbol{\gamma}}|^s$. 
    Then, the function $|\dot{\gamma}|$ is called the \emph{causal speed} of $\gamma$. It is for almost every $t \in [0,1]$ given by 
    \begin{align}
        |\dot{\gamma}_t| = \lim_{s \nearrow t} \frac{\ell(\gamma_s, \gamma_t)}{t-s} = \lim_{s \searrow t} \frac{\ell(\gamma_t, \gamma_s)}{s-t}.
    \end{align}
\end{definition}
\smallskip

\begin{definition}[\cite{beran2024nonlinear}, Definition 2.48]
   A \emph{metric measure spacetime} is a quadruple $(M, \tau, \ell, \mathfrak{m})$, where $(M, \tau, \ell)$ is a Polish metric spacetime and $\mathfrak{m}$ a non-negative and non-zero Radon measure on $M$.
\end{definition}
Given a topological space $X$, we denote by $\mathcal{P}(X)$ the space of Borel probability measures on $X$. Given two topological spaces $X, Y$, a Borel map $T: X \to Y$, and a Borel measure $\mu$ on $X$, the pushforward $T_{\#}\mu$ of $\mu$ by $T$ is defined via $T_{\#}\mu(B) = \mu (T^{-1}(B))$, for every Borel set $B \subset Y$.
For $t \in [0,1]$, the evaluation map $e_t\colon \mathrm{LCC}([0,1], M) \to M$ is defined via $e_t(\gamma) := \gamma_t$.
\smallskip

\begin{definition}[\cite{beran2024nonlinear}, Definition 3.7]
    A measure $\boldsymbol{\pi} \in \mathcal{P}(\mathrm{LCC}([0,1], M))$ is called a \emph{test plan} if it has bounded compression, i.e., if there exists a constant $C>0$ such that for all $t \in [0,1]$ it holds 
    \begin{align*}
        (e_t)_\#\boldsymbol{\pi} \leq C \mathfrak{m}. 
    \end{align*}
\end{definition}

\smallskip
\begin{definition}[\cite{beran2024nonlinear}, Definition 3.11]\label{def:subslope}
    Let $(M, \tau, \ell, \mathfrak{m})$ be a metric measure spacetime and $f\colon M \to \overline{\R}$ a causal function. A Borel function $G\colon M \to [0, \infty]$ is called a \emph{weak subslope} of $f$ if for every test plan $\boldsymbol{\pi} \in \mathcal{P}(\mathrm{LCC}([0,1], M))$, it holds
    \begin{align}
        \int [f(\gamma_1)-f(\gamma_0)]\, \di \boldsymbol{\pi}(\gamma) \geq \int \int_0^1 G(\gamma_t)|\dot{\gamma}_t|\, \di t \di \boldsymbol{\pi}(\gamma).
    \end{align}
    By Theorem 3.14 in \cite{beran2024nonlinear}, there exists a unique weak subslope $G$ that is no smaller than any other weak subslope. We will call it the \emph{maximal weak subslope} and denote it by $|\di f|$.
\end{definition}

\smallskip
\begin{definition}[\cite{beran2024nonlinear}, Defintion 1.4]
    We call $(M,\tau, \ell, \mathfrak{m})$ \emph{infinitesimally Minkowskian} if for any two causal functions $f, g\colon  M \to \overline{\R}$, it holds
    \begin{align}\label{def:infinitesimal_minkowskianity}
        2|\di(f+g)|^2+ 2|\di f|^2 = |\di g|^2 + |\di (2f+g)|^2 \ \mathfrak{m}-\mathrm{a.e.}
    \end{align}
\end{definition}
Given a test plan $\boldsymbol{\pi}$, we denote by $\rho_{\boldsymbol{\pi}}$ the density of the push-forward of the measure $|\dot{\gamma}_t|\di\boldsymbol{\pi}(\gamma) \di t$ under the ``full" evaluation map $\mathsf{e}(\gamma, t) := \gamma_t$. Define $\mathrm{Vis}_{\boldsymbol{\pi}}:=\{\rho_{\boldsymbol{\pi}}>0\}$. We define $\mathrm{Vis}(M)$ as the essential union of the sets $\mathrm{Vis}_{\boldsymbol{\pi}}$, as $\boldsymbol{\pi}$ varies over all test plans (see \cite[Proposition 3.1.9]{gigli2020lectures}). 
We will use that a property holds $\mathfrak{m}$-almost everywhere on $\mathrm{Vis}(M)$ if and only if it holds $\mathfrak{m}$-almost everywhere on $\mathrm{Vis}_{\boldsymbol{\pi}}$ for every test plan $\boldsymbol{\pi}$. 
Finally denote $\mathrm{Invis}(M):= M\setminus \mathrm{Vis}(M)$.

\smallskip
\begin{lemma}[\cite{beran2024nonlinear}, Lemma 3.19]\label{achronal_invisible}
    Let $(M, \tau, \ell, \mm)$ be a metric measure spacetime and $A \subset M$ be an achronal set. Then $\mm(A \setminus\mathrm{Invis}(M) ) =0$.
\end{lemma}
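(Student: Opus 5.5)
The plan is to fix an arbitrary test plan $\boldsymbol{\pi}$ and show that $\rho_{\boldsymbol{\pi}}=0$ holds $\mm$-a.e.\ on $A$, so that $\mm(A\cap \mathrm{Vis}_{\boldsymbol{\pi}})=0$. Since a property holds $\mm$-a.e.\ on $\mathrm{Vis}(M)$ if and only if it holds $\mm$-a.e.\ on every $\mathrm{Vis}_{\boldsymbol{\pi}}$, this gives $\mm(A\cap\mathrm{Vis}(M))=0$, which is the claim $\mm(A\setminus \mathrm{Invis}(M))=0$.

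Measurability first. If $A$ is not Borel, I would replace it by an $\mm$-measurable set or a Borel subset, or interpret $\mm$ as an outer measure. The argument below only needs, for each fixed path, an estimate on the set of times spent in $A$, and this estimate passes to Borel subsets of $A$. So assume $A$ is Borel. By the definition of $\rho_{\boldsymbol{\pi}}$ as the density of $\mathsf e_\#(|\dot\gamma_t|\,\di\boldsymbol{\pi}\,\di t)$,
\begin{align*}
\int_A \rho_{\boldsymbol{\pi}}\,\di\mm = \int\int_0^1 \mathbbm{1}_A(\gamma_t)\,|\dot\gamma_t|\,\di t\,\di\boldsymbol{\pi}(\gamma).
\end{align*}
It therefore suffices to show that the inner integral vanishes for every causal path $\gamma$.

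The key pathwise step is as follows. Fix $\gamma$ and set $T:=\{t\in[0,1]:\gamma_t\in A\}$. For $t<s$ in $T$, causality of $\gamma$ gives $\gamma_t\le\gamma_s$, so $\ell(\gamma_t,\gamma_s)\ge0$. Achronality of $A$ forbids $\gamma_t\ll\gamma_s$, so $\ell(\gamma_t,\gamma_s)=0$. By the defining property of the causal speed measure in Definition \ref{def_causal_speed}, $|\dot{\boldsymbol\gamma}|([t,s])\le \ell(\gamma_t,\gamma_s)=0$. Let $a=\inf T$ and $b=\sup T$. Then $(a,b)$ is a countable increasing union of intervals $[t_n,s_n]$ with $t_n,s_n\in T$, so $|\dot{\boldsymbol\gamma}|((a,b))=0$. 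In particular, its absolutely continuous part $|\dot\gamma|\mathcal L^1$ vanishes on $(a,b)$, so $|\dot\gamma|=0$ a.e.\ on $(a,b)$. Since $T\subset[a,b]$ and $\{a,b\}$ is $\mathcal L^1$-null (which also covers the trivial case where $T$ has at most one point), we obtain $\int_0^1\mathbbm 1_T|\dot\gamma_t|\,\di t=0$.

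Integrating over $\boldsymbol{\pi}$ gives $\int_A\rho_{\boldsymbol{\pi}}\,\di\mm=0$. Hence $\rho_{\boldsymbol{\pi}}=0$ $\mm$-a.e.\ on $A$, which concludes the argument as explained at the start. The only delicate points are the measurability issues: joint measurability of $(\gamma,t)\mapsto\mathbbm 1_A(\gamma_t)|\dot\gamma_t|$, which is already implicit in the definition of $\rho_{\boldsymbol{\pi}}$, and the non-Borel case of $A$. I expect these to be the main technical obstacle. The geometric content is just the observation that a causal path spends zero proper time inside an achronal set.
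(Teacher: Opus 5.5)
The paper gives no proof of this lemma; it is quoted from \cite{beran2024nonlinear}, so there is no in-paper argument to compare against. Judged on its own, your proof is correct and self-contained.

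The essential step is right. For $t<s$ in $T=\gamma^{-1}(A)$, causality of $\gamma$ gives $\ell(\gamma_t,\gamma_s)\ge 0$, and achronality excludes $\ell(\gamma_t,\gamma_s)\in(0,+\infty]$. The defining inequality of the causal speed measure then forces $|\dot{\boldsymbol{\gamma}}|([t,s])=0$. Exhausting $(\inf T,\sup T)$ by such intervals shows that the whole measure $|\dot{\boldsymbol{\gamma}}|$ vanishes on the convex hull of $T$, not only its absolutely continuous part. Integrating against $\boldsymbol{\pi}$ and using the characterisation of $\mathrm{Vis}(M)$ through the sets $\mathrm{Vis}_{\boldsymbol{\pi}}$ then gives the claim.

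The one point to state more carefully is measurability. Passing to a Borel subset $B\subset A$ (again achronal) suffices when $A$ is $\mm$-measurable, because $B$ can then be chosen with $\mm(A\setminus B)=0$. This is the reading under which the expression $\mm(A\setminus\mathrm{Invis}(M))$ makes sense.

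Under an outer-measure reading, however, Borel subsets only control the inner measure. You would need a Borel \emph{superset} of $A\cap\mathrm{Vis}_{\boldsymbol{\pi}}$ of $\mm$-measure zero, and the pathwise vanishing does not produce one by itself. The outer measure of $\mathsf e_\#(|\dot\gamma_t|\,\boldsymbol{\pi}\otimes\mathcal L^1)$ at a non-measurable $A$ is not computed by pulling $A$ back along $\mathsf e$. So either make measurability of $A$ an explicit hypothesis, or drop the suggestion that the outer-measure case is covered by the same reduction.
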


\smallskip
\begin{lemma}\label{local_finiteness}
    Let $(M, \tau, \ell, \mm)$ be a metric measure spacetime and $f$ be a causal function. Then for $\mm$-almost every $p\in \{|f|< \infty\}\cap \mathrm{Vis}(M)$, there exists an open neighbourhood $U$ of $p$ such that $\mm(U \setminus \{|f|< \infty\})=0$.
\end{lemma}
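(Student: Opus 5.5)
The plan is a contradiction argument built from a single test plan. Let $A$ be the set of points $p \in \{|f|<\infty\}\cap \mathrm{Vis}(M)$ for which every open neighbourhood $U$ of $p$ satisfies $\mm(U\setminus\{|f|<\infty\})>0$; we must show $\mm(A)=0$. First I would exploit the monotonicity of $f$. Suppose $p\le q$ and $f(q)=-\infty$. Then $f(p)\le f(q)=-\infty$, so $f(p)=-\infty$. Symmetrically, if $p\le q$ and $f(p)=+\infty$, then $f(q)=+\infty$. Hence the set $\{f=+\infty\}$ is a future set and $\{f=-\infty\}$ is a past set. For a point $p$ with $f(p)$ finite, no point of $\{f=+\infty\}$ lies in the causal past of $p$, and no point of $\{f=-\infty\}$ lies in its causal future.

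The main step uses visibility. Since $p$ is (essentially) in $\mathrm{Vis}(M)$, there is a test plan $\boldsymbol\pi$ with $\rho_{\boldsymbol\pi}>0$ near $p$. I would consider $\boldsymbol\pi$-typical curves $\gamma$ that pass through points of $\{|f|<\infty\}$ at times $t$ where $|\dot\gamma_t|>0$. Along any such curve, $t\mapsto f(\gamma_t)$ is monotone. Its finite set is therefore an interval of times, whose complement is an initial segment where $f=-\infty$ together with a final segment where $f=+\infty$. If $\mm$-positively many points near $p$ were infinite, then by bounded compression and the positivity of $\rho_{\boldsymbol\pi}$ we would like to derive a contradiction, or else show that this can only happen on an $\mm$-null set of $p$'s.

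The cleanest route I see is different. Set $E_+ := \{f=+\infty\}$ and let $\partial E_+$ be its topological boundary. I claim that $\partial E_+\cap\{|f|<\infty\}$ is contained in a countable union of achronal sets, or is at least invisible. Points of $A$ lie in $\overline{E_+\cup E_-}\cap\{|f|<\infty\}$, and up to a null set they lie in the boundary of a future set (or of a past set). Boundaries of future sets are achronal in the usual sense: if $x\ll y$ both lie on the boundary, then $y$ lies in the interior, provided $\ll$ is open. In a general Polish metric spacetime this openness is not available, so instead I would argue directly via test plans. Concretely, I would replace $f$ by the truncation $\arctan f$, which is still causal and takes values in $[-\pi/2,\pi/2]$. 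Near a point of $A$, the truncated function jumps from a value $<\pi/2$ to $\pi/2$ on sets of positive measure arbitrarily close by. By Lemma \ref{cts_outside_achronal}, after discarding a countable union of achronal sets, which is invisible by Lemma \ref{achronal_invisible}, the representative of $\arctan f$ is continuous at $p$. Continuity at $p$ with $\arctan f(p)\in(-\pi/2,\pi/2)$ then forces $\arctan f$ to stay bounded away from $\pm\pi/2$ on a neighbourhood $U$. This gives $\mm(U\setminus\{|f|<\infty\})=0$ provided the representative agrees with $f$ $\mm$-a.e. on $U$, which holds by the definition of a representative.

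The main obstacle is the bookkeeping around representatives. Lemma \ref{cts_outside_achronal} gives a representative $\tilde f$ equal to $f$ $\mm$-a.e. (or off an invisible set), and I must make sure that $\{|f|<\infty\}$ and $\{|\tilde f|<\infty\}$ agree up to an $\mm$-null set on $\mathrm{Vis}(M)$. I must also check that continuity is meant with respect to the extended-real topology, so that finite values at $p$ propagate to a neighbourhood. I expect these points to follow from the statement of Lemma \ref{cts_outside_achronal} together with Lemma \ref{achronal_invisible}, which absorbs the achronal exceptional sets into $\mathrm{Invis}(M)$.
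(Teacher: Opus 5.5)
Your final route is essentially the paper's proof: take the representative from Lemma \ref{cts_outside_achronal}, discard the countable union of achronal sets via Lemma \ref{achronal_invisible}, and use continuity and finiteness of the representative at $p$ to get a neighbourhood on which it stays finite. The $\arctan$ truncation and the earlier exploratory ideas (test plans, boundaries of future sets) are unnecessary. The paper applies the lemma directly to $f$ and, like you, simply identifies $f$ with its representative.
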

\begin{proof}
By Lemma \ref{cts_outside_achronal}, we can find a representative $\Tilde{f}$ of $f$ and achronal sets $\{A_n\}_{n\in \N}$ such that $\Tilde{f}$ is continuous outside $\bigcup_{n \in \N} A_n$. We will from now on write $f$ for $\Tilde{f}$. Now Lemma \ref{achronal_invisible} yields that $\mm$-almost every point in $\{|f|< \infty\}\cap \mathrm{Vis}(M)$ is in $F:=\{|f|< \infty\}\cap \mathrm{Vis}(M) \setminus \bigcup_{n \in \N} A_n$. Pick a point $p \in F$. As $f$ is continuous and finite at $p$, we get that there exists an open neighbourhood $U$ of $p$ such that $f$ is finite on $U$.
\end{proof}

\section{Preliminary results from measure theory}\label{sec3}
The following measure theoretic results will be useful in Section \ref{sec4}. Let $m \in \N$. 
\smallskip

\begin{lemma}\label{ball_evaluation_measurable}
    Let $\Omega\subset \R^m$ be open and let $\mu$ be a locally finite Radon measure on $\Omega$. 
    For $\delta>0$ define the set $\Omega_\delta:= \{p \in \Omega: \overline{B_\delta(p)}\subset \Omega\}$. 
    Then for every $\delta>0$, the map $E_{\mu, \delta}\colon \Omega_\delta \to [0, \infty),\,   p\mapsto \mu(\overline{B_\delta(p)})$ is $\mathcal{L}^m$-measurable and locally bounded.
\end{lemma}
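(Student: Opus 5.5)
The plan is to show that $E_{\mu,\delta}$ is upper semicontinuous on $\Omega_\delta$. Upper semicontinuous functions are Borel, hence $\mathcal{L}^m$-measurable. Local boundedness will then follow from local finiteness of $\mu$ together with a compactness argument.

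\emph{Upper semicontinuity.} Fix $p\in\Omega_\delta$ and a sequence $p_n\to p$ with $p_n\in\Omega_\delta$.
\begin{itemize}
\item The sets $K_k:=\overline{B_{\delta+1/k}(p)}$ decrease to $\overline{B_\delta(p)}$. Since $\overline{B_\delta(p)}$ is compact and contained in the open set $\Omega$, it has positive distance to $\R^m\setminus\Omega$. Hence $K_k\subset\Omega$ is compact for $k$ large, so $\mu(K_k)<\infty$ by local finiteness.
\item Continuity from above of $\mu$ then gives $\mu(K_k)\searrow \mu(\overline{B_\delta(p)})$.
\item For any fixed $k$, we have $\overline{B_\delta(p_n)}\subset K_k$ as soon as $|p_n-p|\le 1/k$. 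Therefore $\limsup_n E_{\mu,\delta}(p_n)\le \mu(K_k)$ for every $k$.
\item Letting $k\to\infty$ yields $\limsup_n E_{\mu,\delta}(p_n)\le E_{\mu,\delta}(p)$.
\end{itemize}
Thus $E_{\mu,\delta}$ is upper semicontinuous. Each superlevel set $\{E_{\mu,\delta}\ge a\}$ is therefore relatively closed in $\Omega_\delta$, so $E_{\mu,\delta}$ is Borel measurable, in particular $\mathcal{L}^m$-measurable. Note also that $\Omega_\delta$ is open, by the same distance argument as in the first bullet, so it is a measurable domain.

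\emph{Local boundedness.} For $p\in\Omega_\delta$, choose $\eta>0$ with $\overline{B_{\delta+2\eta}(p)}\subset\Omega$. For every $q\in B_\eta(p)$ we have $\overline{B_\delta(q)}\subset \overline{B_{\delta+\eta}(p)}$, so $E_{\mu,\delta}(q)\le \mu(\overline{B_{\delta+\eta}(p)})<\infty$. This bounds $E_{\mu,\delta}$ on a neighbourhood of $p$. (Alternatively, an upper semicontinuous function is automatically locally bounded above, and it is bounded below by $0$.)

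The only mildly delicate point is ensuring that the enlarged balls remain inside $\Omega$, so that finiteness of $\mu$ on them is available for continuity from above. This follows from compactness of $\overline{B_\delta(p)}$ and openness of $\Omega$.
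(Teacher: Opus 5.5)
Your proof is correct, and it takes a genuinely different and much shorter route than the paper.

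\textbf{The paper's route.} It obtains measurability by decomposing $\mu=\mu_m+\sum_{i<m}\nu_i$. Each $\nu_i$ is concentrated on countably many $i$-dimensional spheres, and $\mu_m$ charges no sphere of dimension $<m$. From this it extracts an $\mathcal{L}^m$-null set $D$, a countable union of lower-dimensional spheres, such that $\mu(\partial B_\delta(p))=0$ for $p\notin D$. It then shows that $E_{\mu,\delta}$ is continuous at every such $p$. So $E_{\mu,\delta}$ is continuous $\mathcal{L}^m$-a.e., hence $\mathcal{L}^m$-measurable.

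\textbf{Your route.} You show directly that $E_{\mu,\delta}$ is upper semicontinuous everywhere on $\Omega_\delta$. This needs only local finiteness and continuity from above on the shrinking compact balls $\overline{B_{\delta+1/k}(p)}$. It yields Borel measurability, which is stronger than Lebesgue measurability, and it avoids the sphere decomposition and its countability argument entirely. Your local boundedness argument is essentially the paper's.

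\textbf{What the paper's route adds.} It also gives a.e.\ continuity of $E_{\mu,\delta}$, which the statement does not ask for. You could recover this cheaply within your setup.
\begin{itemize}
\item The symmetric argument (continuity from below, or Fatou) shows that $p\mapsto\mu(B_\delta(p))$ is lower semicontinuous. Combined with your upper semicontinuity, $E_{\mu,\delta}$ is then continuous at every $p$ with $\mu(\partial B_\delta(p))=0$.
\item For compact $K\subset\Omega_\delta$, Tonelli gives $\int_K\mu(\partial B_\delta(p))\,\di\mathcal{L}^m(p)=\int\mathcal{L}^m(\{p\in K:|x-p|=\delta\})\,\di\mu(x)=0$. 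So the boundary condition holds for a.e.\ $p$.
\end{itemize}
The lower semicontinuity also directly covers the open-ball version $p\mapsto\mu(B_\delta(p))$, which is the one actually used in Lemma \ref{singular_measure_lebesgue_point}.
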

\begin{proof}
    Fix a $\delta>0$. 
    Fix a compact set $K_1 \subset \Omega_\delta$ and a compact set $K_2 \subset \Omega$ such that $\overline{B_\delta(K_1)} \subset K_2$. Now for each $p \in K_1$, it holds that 
    \begin{align*}
        \mu(\overline{B_\delta(p)}) \leq \mu(K_2) < \infty.
    \end{align*}
    This proves the local boundedness. 

    For $i=0, \ldots, m-1$, define the sets $P_{i}$ and the Radon measures $\mu_{i+1}, \nu_{i}$ such that 
    \begin{itemize}
        \item $P_{i}$ is a countable union of $i$-dimensional spheres, where $i$-dimensional spheres are given by $\{q \in \R^m: |q-p|=r, q-p \in V\}$, where $p \in \R^m$, $r>0$ and $V \subset \R^m$ is an $i$-dimensional linear subspace of $\R^m$. If $i=0$, $P_0$ is going to be a countable set of points.
        \item $\nu_{i}= \mu_{i}\LL P_{i}$.
        \item $\mu_{i+1} = \mu_{i}-\nu_{i}$ is zero on spheres of dimension at most $i$.
    \end{itemize}

    Define $\mu_0:= \mu$. 
    Now, define the set 
    \begin{align*}
        P_0:= \{p \in K_2: \mu(\{p\})>0\}. 
    \end{align*}
    This set has to be countable, as $P_0 = \bigcup_{n \in \N} \{p \in K_2: \mu(\{p\})>\frac{1}{n}\}$, and for each $n\in \N$, we have that $\#\{p \in K_2: \mu(\{p\})>\frac{1}{n}\}\leq n\mu(K_2) <\infty$. 
    Now define the measures
    \begin{align}
        \nu_0:= \mu \LL P_0,\ \mu_1:= \mu-\nu_0.  
    \end{align}
    Now for $i \in \{1, \ldots, m-1\}$, define inductively the measures $\nu_i, \mu_{i+1}$ as follows:
    Let
    \begin{align}
        P_i:=\{S:  i\mathrm{-dimensional\ sphere}, \ S \subset K_2,\ \mu_i(S)>0\}. 
    \end{align}
    For every $n \in \N$, we have that 
    \begin{align}\label{finitely_many_spheres_of_big_measure}
       \#\{S:  i\mathrm{-dimensional\ sphere}, \ S \subset K_2,\ \mu_i(S)>1/n\} < n\mu_{i}(K_2) \leq n \mu(K_2) < \infty.
    \end{align}
    Indeed, suppose that this was not true. Then there exist two distinct $i$-dimensional spheres $S_\alpha, S_\beta \in P_i$ such that $\mu_{i}( S_\alpha \cap S_\beta) >0$. 
    Now, we have that for any two distinct $i$-dimensional spheres, their intersection is the finite union of spheres of dimension at most $i-1$. By our assumption, the measure $\mu_{i}$ is zero on spheres of dimension at most $i-1$, which yields a contradiction and hence verifies \eqref{finitely_many_spheres_of_big_measure}.
    This proves that $P_i = \bigcup_{n \in \N} \{S:  i\mathrm{-dimensional\ sphere}, \ S \subset K_2,\ \mu_i(S)>1/n\}$ is countable. 
    Now we define 
    \begin{align}
        \nu_i:= \mu_i \LL P_i,\ \mu_{i+1}:= \mu_i-\nu_{i}. 
    \end{align}
    In total, this gives us a decomposition of the measure 
    \begin{align}
        \mu = \mu_m + \sum_{i=0}^{m-1} \nu_{i},
    \end{align}
    where the $\nu_i$ are supported on countably many $i$-dimensional spheres (collected in $P_i$) and $\mu_m$ is zero on spheres of dimension smaller than $m$. 

    For each $i\leq m-1$, and any $i$-dimensional sphere $S$, we have that the set
    \begin{align*}
        D_{S, \delta}:= \{p: S \subset \partial B_\delta(p)\}
    \end{align*}
    is a sphere of dimension $m-i-1$. Any such sphere is a Borel $\mathcal{L}^m$-zero set.
    Then, the set 
    \begin{align}
        D:= \bigcup_{i=0}^{m-1} \bigcup_{S \in P_i} D_{S, \delta}
    \end{align}
    is a Borel $\mathcal{L}^m$-zero set, which is a countable union of spheres of dimension at most $m-1$.
    For any point $p \in K_1\setminus D$, we have that 
    \begin{align*}
        \mu(\partial B_\delta(p)) = \mu_m(\partial B_\delta(p))=0.
    \end{align*}
    Note that 
    \begin{align}
        \lim_{n \to \infty} \mu(B_{\delta+ 2^{-n}}(p)\setminus B_{\delta- 2^{-n}}(p)) = \mu(\partial B_\delta(p))=0.
    \end{align}
    Fix an $\e>0$ and $n\in \N$ such that $\mu(B_{\delta+ 2^{-n}}(p)\setminus B_{\delta- 2^{-n}}(p)) \leq \e$.
    Then we have that for each $q \in B_{2^{-n-1}}(p)$, it holds that 
    \begin{align}
        |\mu(\overline{B_\delta(p)}) - \mu(\overline{B_\delta(q)})| &\leq \mu((\overline{B_\delta(p)}\setminus \overline{B_\delta(q)})\cup (\overline{B_\delta(q)}\setminus \overline{B_\delta(p)})) \leq \mu(B_{\delta+ 2^{-n}}(p) \setminus B_{\delta- 2^{-n}}(p)) \leq \e.
    \end{align}
    Hence, $E_{\mu, \delta}$ is continuous at $p$. This holds for $p \in K_1 \setminus D$, so the map is continuous $\mathcal{L}^m$-almost everywhere. Hence, $E_{\mu, \delta}$ is $\mathcal{L}^m$-measurable when restricted to $K_1$. As $K_1 \subset \Omega_\delta$ was arbitrary, this concludes the proof.
\end{proof}

\begin{lemma}\label{singular_measure_lebesgue_point}
    Let $\mu$ be a locally finite Radon measure on $\R^m$ such that $\mu \perp \mathcal{L}^m$. 
    %Fix a vector $(a_1, \ldots, a_m) \in (0, 1]^m$.
    Then for $\mathcal{L}^m$-almost every point $p\in \R^m$, it holds that 
   \begin{align}
        \lim_{\delta\to 0} \frac{\mu(B_\delta(p))}{\delta^m } = 0.
    \end{align}
\end{lemma}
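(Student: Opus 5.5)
This is the classical density estimate for a measure singular to Lebesgue measure. The plan is a Vitali/Besicovitch covering argument of the kind behind the Lebesgue–Besicovitch differentiation theorem. Since the statement is local, we may restrict to a bounded open set $\Omega$ and assume $\mu(\Omega)<\infty$. Singularity $\mu\perp\mathcal{L}^m$ gives a Borel set $N$ with $\mathcal{L}^m(N)=0$ and $\mu(\R^m\setminus N)=0$. It therefore suffices to show that for each fixed $t>0$ the set
\begin{align*}
    E_t:=\Big\{p\in \Omega\setminus N: \limsup_{\delta\to 0}\frac{\mu(B_\delta(p))}{\delta^m}>t\Big\}
\end{align*}
has $\mathcal{L}^m$-measure zero. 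Taking $t=1/k$, $k\in\N$, and removing the null set $N$ then gives the claim, because the limit inferior is trivially nonnegative.

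To estimate $E_t$, fix $\e>0$. By outer regularity of $\mu$ and $\mu(\Omega\setminus N)=0$, choose an open set $U$ with $\Omega\setminus N\subset U\subset\Omega$ and $\mu(U)<\e$. For every $p\in E_t$ there are arbitrarily small radii $\delta$ with $B_\delta(p)\subset U$ and $\mu(B_\delta(p))>t\delta^m$. These balls form a fine cover of $E_t$. By the Vitali covering lemma (the $5r$-version, whose radii are uniformly bounded here), we extract a countable disjoint subfamily $\{B_{\delta_j}(p_j)\}_j$ such that $E_t\subset\bigcup_j B_{5\delta_j}(p_j)$. Alternatively, the Besicovitch covering theorem gives a bounded-overlap subfamily covering $E_t$ with constant $c_m$, and it is the tool already mentioned in the introduction. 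Then
\begin{align*}
    \mathcal{L}^m(E_t)\le \sum_j \omega_m 5^m\delta_j^m\le \frac{5^m\omega_m}{t}\sum_j\mu(B_{\delta_j}(p_j))\le \frac{5^m\omega_m}{t}\mu(U)<\frac{5^m\omega_m}{t}\e,
\end{align*}
where $\omega_m$ is the volume of the unit ball. Letting $\e\to0$ gives $\mathcal{L}^m(E_t)=0$, here in the sense of outer measure, which is enough.

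I expect no serious obstacle. The one point that needs care is that $E_t$ need not be obviously measurable, so the argument should be phrased with Lebesgue outer measure, which the covering bound controls directly. Measurability of $p\mapsto\mu(B_\delta(p))$ (compare Lemma \ref{ball_evaluation_measurable}) is then not needed. A second small point is that the covering selection must use balls contained in $U$ and of bounded radius; we arrange this by restricting to radii $\delta<\min(1,\mathrm{dist}(p,U^c))$.
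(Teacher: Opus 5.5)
Your proposal is correct and is essentially the paper's argument. Both split off a Lebesgue-null Borel set carrying $\mu$, cover the bad set by small balls of $\mu$-density above a threshold lying inside an open set, extract a disjoint subfamily, and conclude via outer regularity of $\mu$ on the $\mu$-null complement. The only differences are that you argue directly with Lebesgue outer measure rather than by contradiction (so the measurability supplied by Lemma \ref{ball_evaluation_measurable} is indeed not needed), and that you use the Vitali $5r$-lemma in place of the paper's Besicovitch-plus-pigeonhole selection, which suffices since the measure being estimated is $\mathcal{L}^m$.
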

\begin{proof}
 Suppose this was not the case. Then there exist an $\e>0$ and a bounded set $A' \subset \R^m$ with $\mathcal{L}^m(A')>0$ such that for each $p \in A'$ it holds 
    \begin{align}\label{contradiction_assumption_singular_measure}
        \limsup_{\delta\to 0} \frac{\mu(B_\delta(p))}{\delta^m } \geq  2\e>0.
    \end{align}
    Note that by Lemma \ref{ball_evaluation_measurable}, we have that the set of all points satisfying \eqref{contradiction_assumption_singular_measure} is Lebesgue measurable. 
    As $\mu \perp \mathcal{L}^m$, there exists a decomposition of $\R^m$ into disjoint Borel sets $L, S$ such that $\mathcal{L}^m(S) = 0 = \mu(L)$ and $L \cup S = \R^m$.
    Define $A:= A' \setminus S$. Then $\mathcal{L}^m(A')= \mathcal{L}^m(A) >0$. Pick an open set $\Tilde{O} \supset A$. Then for each point $p \in A$, there exists a $\delta_p> 0$ such that $B_{\delta_p}(p) \subset \Tilde{O}$ and such that 
    \begin{align}\label{choice_delta_p}
        \frac{\mu(B_{\delta_p}(p))}{(\delta_p)^m} \geq  \e.
    \end{align}
    Then 
    \begin{align*}
        A \subset \bigcup_{p \in A} B_{\delta_p}(p) \subset \Tilde{O}.
    \end{align*} 
    By the Besicovitch covering theorem, there exists a constant $c_m \in \N$ such that there exist $c_m$ countable families $T_1, \ldots, T_{c_m}$ of points $p \in A$ such that 
    \begin{align*}
         A \subset \bigcup_{i=1}^{c_m} \bigcup_{p\in T_i} B_{\delta_{p}}(p) =:O \subset \Tilde{O},  
    \end{align*}
    and such that for each $i$ and $p, p' \in T_i$, it holds 
    \begin{align}\label{pairwise_disjoint_balls}
        B_{\delta_{p'}}(p') \cap B_{\delta_{p}}(p) = \emptyset.
    \end{align}
    By the pigeonhole principle, we get that there exists an $i \in \{1, \ldots, c_m\}$ such that 
    \begin{align}\label{family_with_big_measure}
        \mathcal{L}^m\Big( \bigcup_{p\in T_i} B_{\delta_{p}}(p) \Big) \geq \frac{1}{c_m} \mathcal{L}^m(A). 
    \end{align}
    But now \eqref{pairwise_disjoint_balls} together with \eqref{choice_delta_p} and \eqref{family_with_big_measure} yields that 
    \begin{align}
        \mu\Big( \bigcup_{p\in T_i} B_{\delta_{p}}(p) \Big) \geq  \frac{\e}{\mathcal{L}^m(B_1(0))} \sum_{p \in T_i} \mathcal{L}^m( B_{\delta_{p}}(p)) \geq \frac{\e}{c_m\mathcal{L}^m(B_1(0))} \mathcal{L}^m(A). 
    \end{align}
    It follows that 
    \begin{align*}
        \mu(\Tilde{O}) \geq \mu(O) \geq \frac{\e}{c_m\mathcal{L}^m(B_1(0))} \mathcal{L}^m(A).
    \end{align*}
    As $\Tilde{O}$ is an arbitrary open set containing $A$, the outer regularity of Radon measures on $\R^m$ yields that 
    \begin{align}
        \mu(A) \geq \frac{\e}{c_m\mathcal{L}^m(B_1(0))} \mathcal{L}^m(A) >0.
    \end{align}
    But $A \subset L$ and $\mu(L)=0$ by assumption. This is a contradiction and finishes the proof. 
\end{proof}
For a set $A \subset \R^m$, we denote by $A^c$ its complement, i.e., $A^c = \R^m \setminus A$.
\smallskip

\begin{lemma}\label{null_set_local_to_global}
    Let $A \subset \R^m$ be Lebesgue measurable and $P, Q \subset A$ such that $\mathcal{L}^m(A \cap P^c) = 0$. Assume that for each $p \in P$ there exists a radius $r_p>0$ such that $\mathcal{L}^m(Q^c \cap B_{r_p}(p))=0$. Then, $\mathcal{L}^m(A \cap Q^c)=0$. In particular, $Q$ is measurable.  
\end{lemma}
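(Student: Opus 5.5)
The plan is to reduce to a countable cover, using that $\R^m$ is second countable (Lindelöf). Consider the open set
\begin{align*}
U := \bigcup_{p \in P} B_{r_p}(p).
\end{align*}

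First, since $U$ is an open subset of $\R^m$, it is Lindelöf. So there is a countable family $\{p_k\}_{k\in\N}\subset P$ with
\begin{align*}
U = \bigcup_{k \in \N} B_{r_{p_k}}(p_k).
\end{align*}
To get this concretely, for each $p\in P$ pick a ball $B$ with rational center and rational radius such that $p\in B\subset B_{r_p}(p)$. Only countably many such rational balls occur. For each rational ball that occurs, choose one point $p_k \in P$ whose ball contains it; these countably many balls $B_{r_{p_k}}(p_k)$ already cover $U$. Note that $P$ need not be measurable; this causes no problem, since only the open balls enter.

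Second, we estimate by countable subadditivity of the outer Lebesgue measure:
\begin{align*}
\mathcal{L}^m(Q^c \cap U) \le \sum_{k\in\N} \mathcal{L}^m\big(Q^c \cap B_{r_{p_k}}(p_k)\big) = 0.
\end{align*}
Third, since $P \subset U$, we have $A\cap Q^c \subset (Q^c\cap U) \cup (A\cap P^c)$. Both sets on the right are (outer) null, the second by hypothesis. Hence $\mathcal{L}^m(A\cap Q^c)=0$, understood as outer measure.

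For measurability, we use that $Q\subset A$, so $Q = A \setminus (A\cap Q^c)$. This is a measurable set minus a Lebesgue null set, and null sets are measurable by completeness of Lebesgue measure. Hence $Q$ is measurable.

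The only delicate point is the countable reduction in the first step: the family $\{B_{r_p}(p)\}_{p \in P}$ may be uncountable and $P$ may be non-measurable. The Lindelöf argument above handles both issues, since everything else only uses the outer measure.
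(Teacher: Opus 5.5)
Your proof is correct. It has the same skeleton as the paper's proof: extract a countable subfamily $\{B_{r_{p_k}}(p_k)\}$ covering $P$, use $A\cap Q^c\subset (A\cap P^c)\cup\bigcup_k \bigl(Q^c\cap B_{r_{p_k}}(p_k)\bigr)$ with countable subadditivity of the outer measure, and conclude measurability from completeness.

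The one genuine difference is the tool for the countable reduction. The paper first reduces to bounded $A$ and then invokes the Besicovitch covering theorem. You use second countability (Lindel\"of) instead. Only the countability of the subfamily is ever used, and Besicovitch's bounded-overlap property plays no role. So your route is more elementary and needs neither the boundedness reduction nor the control on the radii that Besicovitch requires. The paper's tool simply delivers more structure than the lemma needs. You also make the measurability step explicit ($Q=A\setminus(A\cap Q^c)$, using $Q\subset A$), which the paper leaves implicit.

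There is one small slip in your concrete construction, though it is harmless. Choosing, for each $p\in P$, a rational ball $B$ with $p\in B\subset B_{r_p}(p)$, and then one representative $p_k$ per occurring $B$, yields balls that cover $P$ but not necessarily $U$. For example, in $\R$ take $P=(-1,1)$, $r_0=10$ and $r_p=1$ for $p\neq0$, so $U=(-10,10)$. The rational ball $(-\tfrac12,\tfrac12)$ chosen for $p=0$ may be represented by $p_k=\tfrac1{10}$. If $0$ is never picked as a representative, which your rule allows, all selected balls lie in $(-2,2)$ and $5\in U$ is not covered. To cover $U$ you would have to choose rational balls around every point of $U$, as in the standard Lindel\"of argument. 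Your second and third steps only use $P\subset\bigcup_k B_{r_{p_k}}(p_k)$, so replacing $U$ by this countable union repairs the argument. The abstract Lindel\"of statement you quote first is correct as it stands.
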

\begin{proof}
    We may assume $A$ to be bounded.
    Then $A \cap P \subset \bigcup_{p \in P}B_{r_p}(p)$. The Besicovitch covering theorem yields that there exists a countable set $(p_i)_{i \in \N} \subset P$ such that 
    \begin{align*}
        A \cap P \subset \bigcup_{i \in \N}B_{r_{p_i}}(p_i).
    \end{align*}
    Denote by $\mathcal{L}^{m*}$ the outer $m$-dimensional Lebesgue measure. 
    Now we have that 
    \begin{align*}
        \mathcal{L}^{m*}(A \cap Q^c) \leq \mathcal{L}^{m*}(A \cap P^c) + \sum_{i \in \N} \mathcal{L}^{m*}(B_{r_{p_i}}(p_i)\cap Q^c) =0.
    \end{align*}
    Now null sets are Lebesgue measurable, which proves the lemma. 
\end{proof}

 The final lemma of this section is proved analogously to Lemma \ref{Lebesguepointsadvanced_lor}, so the proof is omitted. 

 \smallskip
 \begin{lemma}\label{lebesgue_point_line}
     Let $\Omega\subset \R^m$ be open, $f: \Omega \to \R$ be Lebesgue measurable and locally bounded, and $\Gamma \subset B_1(0)$ be a countable set. Then for $\mathcal{L}^m$-almost every point $p \in \Omega$ and all $\dot{\gamma} \in \Gamma$, we have that $t=0$ is a Lebesgue point of the map 
     \begin{align}
         L^f_{p, \dot{\gamma}}: (-\e,\e) \to \R, t \mapsto f(p+ t\dot{\gamma}), 
     \end{align}
     where $\e$ is such that $B_\e(p) \subset \Omega$.
 \end{lemma}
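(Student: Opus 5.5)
Since $\Gamma$ is countable and a countable union of $\mathcal{L}^m$-null sets is null, it suffices to fix one $v:=\dot{\gamma}\in\Gamma$ and show that for $\mathcal{L}^m$-a.e.\ $p\in\Omega$ the point $t=0$ is a Lebesgue point of $L^f_{p,v}$. We may also assume $v\neq 0$; for $v=0$ the map $L^f_{p,v}$ is constant and there is nothing to prove. The plan is to reduce this to the one-dimensional Lebesgue differentiation theorem along the lines parallel to $v$, combined with Fubini's theorem.

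\textbf{Step 1: straighten the lines.} After a rotation and a rescaling of the parameter $t$ (which does not affect the Lebesgue point property), we may assume $v=e_m$. We also exhaust $\Omega$ by countably many open boxes $R=R'\times(a,b)\Subset\Omega$ with $R'\subset\R^{m-1}$. On each such box $f$ is bounded, hence $f\in L^1(R)$, so it suffices to prove the claim for $\mathcal{L}^m$-a.e.\ $p\in R$. Write $p=(x',s)$ with $x'\in R'$ and $s\in(a,b)$. Then
\begin{align*}
L^f_{p,v}(t)=f(x',s+t)=:f_{x'}(s+t).
\end{align*}
Consequently, $t=0$ is a Lebesgue point of $L^f_{p,v}$ if and only if $s$ is a Lebesgue point of the one-variable function $f_{x'}$. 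The range of admissible $t$ is restricted by $\e$, but the Lebesgue point property only concerns arbitrarily small neighbourhoods of $0$, so this restriction is harmless.

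\textbf{Step 2: Fubini.} By Fubini's theorem (for the completed Lebesgue measure), for $\mathcal{L}^{m-1}$-a.e.\ $x'\in R'$ the slice $f_{x'}$ is Lebesgue measurable and bounded on $(a,b)$, hence lies in $L^1(a,b)$. For every such $x'$, the one-dimensional Lebesgue differentiation theorem shows that the set
\begin{align*}
N_{x'}:=\{s\in(a,b):\ s \text{ is not a Lebesgue point of } f_{x'}\}
\end{align*}
satisfies $\mathcal{L}^1(N_{x'})=0$.

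\textbf{Step 3: measurability of the exceptional set.} Let $N:=\{(x',s)\in R:\ s\in N_{x'}\}$. To conclude $\mathcal{L}^m(N)=0$ from Fubini--Tonelli, $N$ must be measurable, and I expect this to be the main technical point. I would handle it by choosing a Borel representative $\tilde f$ with $\tilde f=f$ $\mathcal{L}^m$-a.e. The function
\begin{align*}
(x',s,h)\mapsto \frac{1}{2h}\int_{-h}^{h}|\tilde f(x',s+\tau)-\tilde f(x',s)|\,\di\tau
\end{align*}
is Borel, and it is continuous in $h$. Hence its $\limsup$ as $h\to0$ can be taken over rational $h$, which makes the exceptional set $\tilde N$ for $\tilde f$ Borel. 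Tonelli then gives
\begin{align*}
\mathcal{L}^m(\tilde N)=\int_{R'}\mathcal{L}^1(\tilde N_{x'})\,\di x'=0.
\end{align*}

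\textbf{Step 4: return to $f$.} It remains to pass from $\tilde f$ back to $f$. Since $f=\tilde f$ $\mathcal{L}^m$-a.e., Fubini gives, for a.e.\ $x'$, that $f_{x'}=\tilde f_{x'}$ $\mathcal{L}^1$-a.e.\ on $(a,b)$. For such $x'$, the Lebesgue averages of $f_{x'}$ and $\tilde f_{x'}$ over intervals agree. The remaining discrepancy is only in the centre value $f_{x'}(s)$ versus $\tilde f_{x'}(s)$, which occurs only on the null set $\{f\neq\tilde f\}$. Therefore the exceptional set for $f$ is contained in $\tilde N\cup\{f\neq\tilde f\}\cup(N'\times(a,b))$, where $N'\subset R'$ is an $\mathcal{L}^{m-1}$-null set. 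Each of these three sets is $\mathcal{L}^m$-null, which proves the claim on $R$. Taking the countable union over boxes $R$ and over $v\in\Gamma$ finishes the proof.
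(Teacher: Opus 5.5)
Your proof is correct and follows essentially the same route as the paper, which refers to the argument of Lemma \ref{Lebesguepointsadvanced_lor}: one-dimensional Lebesgue differentiation along each line parallel to $\dot{\gamma}$, Fubini over the transversal hyperplane, and a countable union over the directions in $\Gamma$. Two things differ in detail: your exhaustion by countably many boxes replaces the paper's local-to-global step via Lemma \ref{null_set_local_to_global}, and your Borel-representative argument in Steps 3--4 proves the measurability of the exceptional set, which the paper's Fubini step takes for granted.
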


\section{Application to manifolds with non-smooth Lorentzian metrics}\label{sec4}
We want to study how the theory on causal functions as it has been developed in \cite{beran2024nonlinear} applies to the setting of a Lorentzian manifold with a metric of low regularity.
From now on the setting will be a smooth $(d+1)$-manifold $M$, for $d \geq 1$, equipped with a continuous Lorentzian metric $g$, i.e., a continuous symmetric bilinear $(0,2)$-tensor of signature $(-, +, \ldots, +)$.
We say that a vector $v \in TM$ is \emph{timelike} if $g(v,v) <0$, \emph{null} or \emph{lightlike} if $g(v,v)=0$ and $v \neq 0$, \emph{spacelike} if $v =0$ or $g(v,v)>0$, and \emph{causal} if it is timelike or null.
We assume that $g$ is \emph{time orientable}, i.e., there exists a continuous timelike vector field $X$ which tells us at each point, which cone is the future cone and which cone is the past cone. More precisely, we say that a causal vector $v \in TM$ is future directed, if $g(v, X)<0$, otherwise we say that it is past directed. Time orientable Lorentzian manifolds are called \emph{spacetimes}.

For causal vectors $v \in TM$, we define $|v|_g:= \sqrt{-g(v,v)}$. Recall that for $p \in M$ and $v, w \in T_pM$ causal, the reverse Cauchy-Schwartz inequality holds, i.e.,
\begin{align}\label{reverse_cauchy_schwartz}
    |g(v, w)| \geq |v|_g|w|_g,
\end{align}
with equality if and only if $v = \alpha w$ for some $\alpha \in \R$. 

A spacetime arising from a manifold with a continuous Lorentzian metric is naturally associated with a topology, a time separation, and a measure as follows:
\begin{itemize}
    \item Any complete Riemannian metric $h$ on $M$ induces a distance $\sfd_h$ on $M$ such that $(M, \sfd_h)$ is a Polish metric space. Fix some such $h$ and denote by $\tau_h$ the topology induced by $\sfd_h$. Note that $\tau_h$ equals the topology induced by charts, so from now on we will not always mention the topology explicitly.  
    \item The metric $g$ induces a volume measure locally given by $\di \vol_g = \sqrt{-\det g}\, \di \mathcal{L}^{d+1}$.
    \item We say that an absolutely continuous curve $\gamma\colon[0,1] \to M$ is \emph{classically causal} (c.c.) and future directed (past directed) if $g(\dot{\gamma}_t,\dot{\gamma}_t) \leq 0$ and $\dot{\gamma}_t$ is future directed (past directed) or zero for almost every $t \in [0,1]$. 
    \item The time separation $\ell_g:M^2\to \{-\infty\}\cup [0, \infty]$ is defined as
    \begin{align}
        \ell_g(x, y) = \sup  \Big\{ L_g(\gamma):=\int_0^1\sqrt{-g(\dot{\gamma}_t,\dot{\gamma}_t)}\, \di t\Big|\ \gamma \in C^{0,1}([0,1], M)\ \mathrm{c.c., \ future \ dir.},\ \gamma_0=x, \gamma_1=y.   \Big\}. 
    \end{align}
\end{itemize}
We then define the chronological future and past $I^+, I^-$ of a point $p \in M$ via
\begin{align*}
    I^+(p) = \{q \in M: p \ll q\}, \ I^-(p) = \{q \in M: q \ll p\},
\end{align*} 
and the causal future and past $J^+, J^-$ of a point $p \in M$ via
\begin{align*}
    J^+(p) = \{q \in M: p \leq q\}, \ J^-(p) = \{q \in M: q \leq p\},
\end{align*} 
where the chronological relation $\ll:=\ll_g$ is defined via $\ll_g := \ell_g^{-1}((0, \infty]) \subset M^2$ and the causal relation $\leq := \leq_g$ is defined via $\leq_g := \ell_g^{-1}([0, \infty]) \subset M^2$. From now on when speaking of classically causal curves, we always refer to future directed classically causal curves, even if not explicitly mentioned.
\smallskip

\begin{remark}\label{causal_curves_constant_remark}
    Notice that in the literature, it is sometimes employed to consider only those classically causal curves whose tangent vectors are non-zero almost everywhere (e.g., \cite{samann2016global}). In the context of Lorentzian length spaces, causal curves are commonly defined to be non-constant \cite{kunzinger2018lorentzian, cavalletti2020optimal}. However, our definition allows classically causal curves with constant segments, as this ensures that $\ell_g(x, x)=0$ for $x \in M$,  and better suits the definition of causal paths in \cite{beran2024nonlinear}. In particular, Lemma \ref{classically_causal_curve_non_constant} shows that our choice to allow classically causal curves with zero derivatives at a positive measure set of times for the definition of $\ell_g$ does not change the values of the time separation $\ell_g(x, z)$ for $x \neq z$.
\end{remark}
\smallskip

\begin{lemma}\label{coordinates_causal_curves_monotone}
    Let $g$ be a continuous Lorentzian metric on $M$ and let $p \in M$. Then there exists an open neighbourhood $U$ of $p$ and coordinates $y= (y_0, \ldots, y_d)$ for $U$ such that the following holds: For any points $p^1 \leq_g p^2 \in U$ that can be joined by a $g$-classically causal curve that is contained in $U$, it holds that $y_i(p^1) \leq y_i(p^2)$ for all $i =0, \ldots, d$.
\end{lemma}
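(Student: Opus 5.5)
Choose coordinates $x=(x_0,\ldots,x_d)$ around $p$ such that $g_p$ is the Minkowski form, $g_{ij}(p)=\diag(-1,1,\ldots,1)$, and the time orientation at $p$ is given by $\partial_{x_0}$. The plan is to replace the coordinate axes by null-type directions. We choose new linear coordinates $y_i$, $i=0,\ldots,d$, each a linear form $y_i = \langle a_i, x\rangle$ with covector $a_i$ lying strictly inside the future dual cone. Concretely we take
\begin{align*}
    y_0 = x_0, \qquad y_i = x_0 + \lambda x_i \quad (i=1,\ldots,d)
\end{align*}
for a fixed $\lambda\in(0,1)$, say $\lambda = 1/2$. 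These forms are linearly independent, so they define coordinates. Each $a_i$ is strictly timelike for the dual Minkowski metric, because $-1+\lambda^2<0$. Equivalently, there is $c>0$ such that $\langle a_i, v\rangle \geq c|v|_{\mathrm{eucl}}$ for every future directed $g_p$-causal vector $v$. Indeed, for such $v$ we have $v_0\geq |\vec v|$, hence $v_0+\lambda v_i\geq (1-\lambda)v_0 \geq \tfrac{1-\lambda}{2}|v|$.

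Next we use continuity of $g$ to pass to a neighbourhood. Fix a slightly wider constant Minkowski-type metric $g'$ in these coordinates, for instance $g' = -(1-\e)^{-2}\di x_0^2+\sum_i \di x_i^2$ with small $\e$ chosen so that the $a_i$ still lie in its dual future cone. Since $g$ is continuous, there is a neighbourhood $U$ of $p$ with $g\prec g'$ on $U$. We use the standard fact that if $g_{ij}$ is close to $g_{ij}(p)$ uniformly on $U$, then the $g_q$-causal cone lies inside the open $g'$-cone. We also shrink $U$ so that the time orientation stays consistent: the vector field $X$ is close to $\partial_{x_0}$, so future directed $g$-causal vectors on $U$ satisfy $v_0>0$ and hence are future $g'$-causal. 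Consequently, for every $q\in U$ and every future directed $g_q$-causal vector $v$, we get $\langle a_i, v\rangle \geq 0$ for all $i$.

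Now let $p^1\leq_g p^2$ be joined by a classically causal curve $\gamma\subset U$. The curve is absolutely continuous, so
\begin{align*}
    y_i(p^2)-y_i(p^1) = \int_0^1 \langle a_i, \dot\gamma_t\rangle\, \di t .
\end{align*}
For almost every $t$, $\dot\gamma_t$ is either zero or future directed $g$-causal at $\gamma_t\in U$, so the integrand is $\geq 0$ almost everywhere. This yields $y_i(p^1)\le y_i(p^2)$ for all $i$.

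The main obstacle is the careful choice of the cone comparison. We must make sure that a uniform neighbourhood $U$ exists on which every $g$-causal future vector is contained in the $g'$-cone and has the correct time orientation. This is a routine but delicate compactness and continuity argument on the unit sphere of directions: the function $(q,v)\mapsto g_q(v,v)$ is continuous on $\overline U\times S^d$, and the $g_p$-causal set is compactly inside the $g'$-timelike set.
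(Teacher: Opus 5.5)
Your proof is correct and is essentially the paper's argument. The paper also dominates $g$ near $p$ by a slightly wider constant metric ($m_2=\diag(-2,1,\ldots,1)$), chooses linear coordinates $y$ in which that constant future cone lies in the positive orthant $C$, and concludes $p^2-p^1=\int_0^1\dot\gamma_t\,\di t\in C$; requiring the covectors $a_i=\di y_i$ to be nonnegative on the wider cone, as you do, is the dual form of that orthant containment, and your explicit coordinates $y_i=x_0+\lambda x_i$ and your time-orientation check simply fill in details the paper leaves implicit.
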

\begin{proof}
    This proof uses a similar idea as in \cite[Theorem A.1]{beran2024nonlinear}. Fix a neighbourhood $W$ around $p$ that is contained in one coordinate patch and assume that $W \subset \R^{d+1}$. Moreover, we may assume the coordinates on $W$ to be chosen in a way that $g(p) = \diag(-1, 1, \ldots, 1)$. 
    We call the coordinates $(t, x_1, \ldots, x_d)$ and suppose that $\partial_t$ is future directed. 
    Now there exists a neighbourhood $U \subset W \subset \R^{d+1}$ of $p$ such that $g \prec \mathrm{diag}(-2, 1, \ldots, 1)=: m_2$ on $U$. Possibly shrinking $U$ further, we may assume that $U \ni p$ is $m_2$-globally hyperbolic and that it can be written as $I_{m_2}^+(p^-) \cap I_{m_2}^-(p^+)$, for some $p^-, p^+ \in W$. 
    Now there exists a linear transform into coordinates $(y_0, y_1, \ldots, y_d)$ such that the future cone of $m_2$ is contained in the set 
    \begin{align*}
        C:=\big\{\sum_{i=0}^d \alpha_i \partial_{y_i}| \alpha_0, \ldots, \alpha_d \geq 0\big\}.
    \end{align*}
    As $m_2 \succ g$ in $U$, we get that the future cone of $g$ is contained in $C$ at every point $q \in U$. 
    Now pick two points $p^1, p^2 \in U$ such that they can be joined by a $g$-classically causal curve $\gamma\colon [0,1] \to M$ with $\gamma([0,1]) \subset U$.  
    Suppose that there exists an $i \in \{0, \ldots, d\}$ such that $y_i(p^1) > y_i(p^2)$. 
    Then $w:=p^2-p^1 \in \R^{d+1} \setminus C$.
    But by assumption, we have that $\gamma$ is a $m_2$-classically causal curve from $p^1$ to $p^2$, hence as $m_2$ is constant, we get that $p^2-p^1 \in C$ which yields a contradiction. Hence, we have that for all $i$, it holds $y_i(p^1) \leq y_i(p^2)$. As $p^1, p^2$ were arbitrary, this proves the lemma. 
\end{proof}

\smallskip

\begin{lemma}\label{classically_causal_curve_non_constant}
    Let $x \neq z \in M$. Let $\gamma\colon[0,1] \to M$ be a Lipschitz continuous classically causal curve from $x$ to $z$. Then there exists a Lipschitz continuous and classically causal curve $\Tilde{\gamma}\colon[0,1] \to M$ with $\Tilde{\gamma}_0=x, \Tilde{\gamma}_1=z$, such that for almost all $t \in [0,1]$ it holds $\dot{\Tilde{\gamma}}_t \neq 0$, and such that 
    \begin{align*}
        L_g(\gamma) = L_g(\Tilde{\gamma}).
    \end{align*}
\end{lemma}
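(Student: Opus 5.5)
The idea is to reparametrize $\gamma$ by removing its constant pieces. Let $\gamma\colon[0,1]\to M$ be Lipschitz and classically causal with $\gamma_0=x\neq z=\gamma_1$. Fix a complete Riemannian metric $h$ on $M$ and define the $h$-arclength function
\begin{align*}
    \sigma(t):=\int_0^t |\dot\gamma_s|_h\,\di s .
\end{align*}
It is Lipschitz and nondecreasing. Its total value $\Lambda:=\sigma(1)$ is positive, since $x\neq z$ forces $\dot\gamma\neq 0$ on a set of positive measure.

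The curve $\gamma$ is constant on every interval on which $\sigma$ is constant. Hence there is a well-defined curve $\hat\gamma\colon[0,\Lambda]\to M$ with $\hat\gamma(\sigma(t))=\gamma_t$. Then $\sfd_h(\hat\gamma(a),\hat\gamma(b))\le |b-a|$, so $\hat\gamma$ is $1$-Lipschitz. Finally set $\tilde\gamma_s:=\hat\gamma(\Lambda s)$; this is Lipschitz with $\tilde\gamma_0=x$ and $\tilde\gamma_1=z$.

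The key steps, in order, are as follows.

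First, I show $|\dot{\hat\gamma}_u|_h=1$ for a.e. $u\in[0,\Lambda]$; this is the standard arclength reparametrization fact. The upper bound $\le 1$ comes from the Lipschitz bound. For the lower bound, the length of $\hat\gamma$ on $[0,\Lambda]$ equals the length of $\gamma$, which is $\Lambda$, by invariance of length under monotone reparametrization. So $\int_0^\Lambda|\dot{\hat\gamma}|_h=\Lambda$, which forces the derivative to have norm $1$ a.e. In particular $\dot{\tilde\gamma}_s\neq0$ for a.e. $s$.

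Second, I verify that $\tilde\gamma$ is classically causal and compare lengths, using the chain rule for the composition $\gamma=\hat\gamma\circ\sigma$. Let $N$ be the null set where $\hat\gamma$ is not differentiable. Since $\sigma$ is absolutely continuous, $\sigma'=0$ a.e. on $\sigma^{-1}(N)$. Hence for a.e. $t$ with $\sigma'(t)>0$, $\hat\gamma$ is differentiable at $\sigma(t)$ and $\dot\gamma_t=\dot{\hat\gamma}_{\sigma(t)}\sigma'(t)$.

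Causality of $\hat\gamma$ follows from this identity. The image under $\sigma$ of the set $G:=\{t:\sigma'(t)>0,\ \dot\gamma_t \text{ future causal}\}$ has full measure in $[0,\Lambda]$. This is the area formula: $\mathcal{L}^1(\sigma([0,1]\setminus G))\le\int_{[0,1]\setminus G}\sigma'=0$, because $\sigma'=0$ wherever $\dot\gamma_t=0$. At points $\sigma(t)$ with $t\in G$, the vector $\dot{\hat\gamma}_{\sigma(t)}$ is a positive multiple of a future causal vector, hence itself future causal.

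Third, the length identity follows from the change of variables formula for the absolutely continuous monotone map $\sigma$, using positive $1$-homogeneity of $v\mapsto\sqrt{-g(v,v)}$:
\begin{align*}
    L_g(\gamma)=\int_0^1 \sqrt{-g(\dot{\hat\gamma}_{\sigma(t)},\dot{\hat\gamma}_{\sigma(t)})}\,\sigma'(t)\,\di t=\int_0^\Lambda|\dot{\hat\gamma}_u|_g\,\di u = L_g(\tilde\gamma).
\end{align*}
The last equality is a linear rescaling.

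The main obstacle is the measure-theoretic bookkeeping in the second step: justifying the chain rule and the change of variables when $\hat\gamma$ is only Lipschitz and $\sigma$ has flat parts. I would handle this via the area/coarea formula for monotone absolutely continuous functions, as sketched above, rather than any pointwise argument.
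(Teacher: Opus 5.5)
Your proof is correct, but it takes a different route from the paper. The paper works locally. It covers $\gamma([0,1])$ by finitely many neighbourhoods from Lemma~\ref{coordinates_causal_curves_monotone}, in whose coordinates $y$ causal curves are nondecreasing in every $y_i$. It then reparametrizes by the $\ell^1$ coordinate length $f(t)=\sum_i(\gamma_t)_i-(\gamma_0)_i$, which by monotonicity is simply a sum of coordinate increments, so $\tilde\gamma$ has unit $\sfd_{l^1}$-speed and hence non-vanishing derivative a.e. Causality of $\dot{\tilde\gamma}$ is obtained not through the chain rule but by a cone comparison. Around $p=\tilde\gamma_t$ one picks a constant metric $g_\e\succ g$ and uses $\gamma=\tilde\gamma\circ f$ to see that the piece of $\gamma$ joining $\tilde\gamma_t$ to $\tilde\gamma_{t+h}$ stays where $g_\e\succ g$. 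The difference quotients therefore lie in the closed convex $g_\e$-future cone, and so does their limit; this is the same mechanism as in Lemma~\ref{derivative_causal_where_exists}. Only the length identity uses the chain rule and substitution, exactly as in your third step.

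You instead use a global $h$-arclength and transfer causality pointwise through $\dot\gamma_t=\sigma'(t)\,\dot{\hat\gamma}_{\sigma(t)}$ with $\sigma'(t)>0$. The price is a few standard real-analysis facts: Lusin's property (N) for $\sigma$, $\sigma'=0$ a.e.\ on $\sigma^{-1}(N)$, and $\mathcal{L}^1(\sigma(E))\le\int_E\sigma'$.

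What each route buys:
\begin{itemize}
\item Your version avoids localizing and concatenating. With the paper's approach one would also have to discard pieces with equal endpoints before normalizing as in \eqref{l1_unit_dist}.
\item Your version makes explicit the justification of the a.e.\ chain rule for a composition whose inner map has flat parts; the paper invokes that chain rule without comment.
\item The paper's version needs no fine information about images of null sets under the reparametrization for the causality step, and it reuses tools it develops for other lemmas anyway.
\end{itemize}
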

\begin{proof}
    Lemma \ref{coordinates_causal_curves_monotone} implies that for every point $p \in M$ there exists a neighbourhood $U$ of $p$ and local coordinates $(y_0, \ldots, y_d)$ for $U$ such that the following holds: For any points $p^1 \leq_g p^2 \in U$ that can be joined by a $g$-classically causal curve that is contained in $U$, it holds that $y_i(p^1) \leq y_i(p^2)$ for all $i =0, \ldots, d$.
    We can cover $\gamma([0,1])$ with finitely many of such neighbourhoods and may hence assume that $\gamma([0,1])$ is contained in one such $U$. From now on, we work in $\R^{d+1}$ with the coordinates $y$ and for $p \in U$ and $i =0, \ldots, d$ we write $p_i$ for $y_i(p)$. We may assume that
    \begin{align}\label{l1_unit_dist}
        \sum_{i=0}^d z_i-x_i =1.
    \end{align}
    Define the function $f\colon[0,1] \to [0,1]$ via
    \begin{align*}
        f(t):= \int_0^t \sum_{i=0}^d (\dot{\gamma}_s)_i\, \di s.
    \end{align*}
    Note that $f$ is Lipschitz continuous as $\gamma$ is Lipschitz continuous. Moreover, our choice of coordinates together with \eqref{l1_unit_dist} yields that $f$ is increasing and surjective. In particular, it holds
    \begin{align*}
        f(t) = \sum_{i=0}^d (\gamma_t)_i -(\gamma_0)_i = \sum_{i=0}^d |(\gamma_t)_i -(\gamma_0)_i|.
    \end{align*}
    We now define the curve $\Tilde{\gamma}\colon [0,1] \to U$ via
    \begin{align}
    \Tilde{\gamma}(\tau):= \gamma(\sup\{\sigma: f(\sigma) \leq \tau\}).
    \end{align}
    We immediately get that $\gamma_0=\Tilde{\gamma}_0$ and $\gamma_1=\Tilde{\gamma}_1$.
    Now let $0 \leq \tau_1 < \tau_2 \leq 1$ and for $j=1, 2$ define $\sigma_j:=\sup\{\sigma: f(\sigma) \leq \tau_j\}$. Then note that as $f$ is increasing it holds $\sigma_2 \geq \sigma_1$. It follows that 
    \begin{align}\label{tilde_gamma_causal}
        \Tilde{\gamma}(\tau_1) = \gamma(\sigma_1) \leq_g \gamma(\sigma_2) = \Tilde{\gamma}(\tau_2).
    \end{align}
    Moreover, as $f$ is continuous, we get that $f(\sigma_j)=\tau_j$ for $j=1, 2$. In particular, this yields
    \begin{align*}
        \sum_{i=0}^d|(\Tilde{\gamma}(\tau_2))_i-(\Tilde{\gamma}(\tau_1))_i| = \sum_{i=0}^d(\gamma(\sigma_2))_i-(\gamma(\sigma_1))_i = f(\sigma_2)-f(\sigma_1)=\tau_2-\tau_1. 
    \end{align*}
    It follows that $\Tilde{\gamma}$ has unit $\sfd_{l^1}$-speed in the $y$-coordinates, where $\sfd_{l^1}(p, q):= \sum_{i=0}^d |p_i-q_i|$ for $p, q \in \R^{d+1}$. As all norms on the Euclidean space are equivalent, it follows that $\Tilde{\gamma}$ is $\sfd_{euc}$-bi-Lipschitz and in particular it holds that $\dot{\Tilde{\gamma}}_t \neq 0$ for almost every $t \in [0,1]$. 

        \textbf{Claim 1} $\gamma = \Tilde{\gamma} \circ f$.\\
    \textit{Proof of claim 1.} Let $t \in [0,1]$. Define $t^*:= \sup\{\sigma: f(\sigma) \leq f(t)\}$. 
    As $f$ is continuous it follows that $f(t) = f(t^*)$. By the definition of $f$, it follows that $\gamma(t) = \gamma(t^*)$. Hence, 
    \begin{align*}
        \Tilde{\gamma}(f(t)) = \gamma(t^*) = \gamma(t).
    \end{align*}
    This proves claim 1. 

     \textbf{Claim 2.} $\dot{\Tilde{\gamma}}_t$ is causal and future directed for almost every $t \in [0,1]$. \\
    \textit{Proof of claim 2.}
    By our previous observations we have that $\dot{\Tilde{\gamma}}_t$ exists and is non-zero for almost every $t \in [0,1]$ so we may fix one such $t$.
     Denote $p:= \Tilde{\gamma}_t \in U \subset \R^{d+1}$.  Define the constant Lorentzian metric $g_\e$ on $\R^{d+1}$ via $g_\e(w,w):=g(p)(w,w)-\e g(p)(X(p), w)^2$. Then $g_\e \succ g(p)$ and hence there exists a neighbourhood $U_\e \subset U$ of $p$ such that $g_\e \succ g$ on $U_\e$.
    We argue by contradiction. 
    Suppose that $v:=\dot{\Tilde{\gamma}}_t\in \R^{d+1} \cong T_pM$ was spacelike or causal and past directed. Then $g(p)(v, v) >0$ or $g(p)(v, X(p))>0$.
     
    Note that $\lim_{\e \to 0} g_\e = g(p)$, hence there exists an $\e>0$ such that $g_\e(v,v)>0$ or $g_\e(v, X(p))>0$. Fix such an $\e$ and possibly shrink $U_\e \ni p$ such that the vector $X(p) \in \R^{d+1}$ is $g$-timelike everywhere in $U_\e$.
    Now let $h_* >0$ be small enough such that $\Tilde{\gamma}([t, t+h_*]) \subset U_\e$. Pick $h \in (0, h_*)$.
    Denote by $\sigma_0:= \sup\{\sigma: f(\sigma) \leq t\}$. Then $\gamma_{\sigma_0} = p$. Moreover, denote $\sigma_h:= \sup\{\sigma: f(\sigma) \leq t+h\}$. Then $\gamma_{\sigma_h} = \Tilde{\gamma}_{t+h}$. 
    Note that by claim 1, we get that $\gamma([\sigma_0, \sigma_h]) = \Tilde{\gamma}([t, t+h]) \subset U_\e$. Hence, we can reparametrise $\gamma|_{[\sigma_0, \sigma_h]}$ to obtain a $g$-classically causal curve $\check{\gamma}:[0,1] \to U_\e$ that connects $\Tilde{\gamma}_t$ with $\Tilde{\gamma}_{t+h}$. By our choice of $U_\e$, we get that $\check{\gamma}$ is $g_\e$-classically causal. However, as $g_\e$ is constant and its future cone is a convex set, we get that
    \begin{align*}
        g_\e(\Tilde{\gamma}_{t+h}-\Tilde{\gamma}_t, \Tilde{\gamma}_{t+h}-\Tilde{\gamma}_t) \leq 0 \ \mathrm{and}\ g_\e(\Tilde{\gamma}_{t+h}-\Tilde{\gamma}_t, X(p)) \leq 0,
    \end{align*}
    i.e., $\Tilde{\gamma}_{t+h}-\Tilde{\gamma}_t$ as a vector in $\R^{d+1}$ is $g_\e$-causal and future directed. 
    Note that $v = \lim_{h \to 0} \frac{\Tilde{\gamma}_{t+h}-\Tilde{\gamma}_t}{h}$, so we get that $g_\e(v, v) \leq 0$ and $g_\e(v, X(p)) \leq 0$, which contradicts our assumption. Hence, $v$ has to be causal and future directed. 
    This proves claim 2. 
    
    Note that this shows that $\Tilde{\gamma}$ is  $g$-classically causal.
    
    Now, both $\Tilde{\gamma}$ and $f$ are Lipschitz continuous, hence we may apply the chain rule and get that 
    \begin{align*}
        \dot{\gamma}_{t} = f'(t) \dot{\Tilde{\gamma}}_{f(t)} \ \mathrm{almost\ everywhere}.
    \end{align*}
    It follows that 
    \begin{align*}
        L_g(\gamma) = \int_0^1 \sqrt{g({\gamma}_t)(\dot{{\gamma}}_t, \dot{{\gamma}}_t)} \, \di t &= \int_0^1 f'(t)\sqrt{g(\Tilde{\gamma}(f(t)))(\dot{\Tilde{\gamma}}_{f(t)}, \dot{\Tilde{\gamma}}_{f(t)})} \, \di t.
    \end{align*}
    However, as $f$ is monotone and absolutely continuous, we may apply substitution (see for instance \cite[Theorem 5.42]{leoni2017first}) and get that 
    \begin{align*}
        \int_0^1 f'(t)\sqrt{g(\Tilde{\gamma}(f(t)))(\dot{\Tilde{\gamma}}_{f(t)}, \dot{\Tilde{\gamma}}_{f(t)})} \, \di t = \int_0^1\sqrt{g(\Tilde{\gamma}_\tau)(\dot{\Tilde{\gamma}}_\tau,\dot{\Tilde{\gamma}}_\tau )}\, \di \tau = L_g(\Tilde{\gamma}),
    \end{align*}
    which yields $L_g(\gamma)= L_g(\Tilde{\gamma})$ and hence proves the lemma. 
\end{proof}

Recall that the spacetime $(M, g)$ is said to be 
\begin{itemize}
    \item \textit{strongly causal} if for every $p \in M$ and any neighbourhood $U \subset M$ of $p$, there exists a neighbourhood $V \subset U$ of $p$ such that every classically causal and future directed curve with endpoints in $V$ is contained in $U$.
    \item \textit{causally simple} if there are no non-constant closed classically causal and future directed curves and if $\leq$ is closed in the manifold topology.
    \item \textit{globally hyperbolic} if it is strongly causal and if for any two points $p, q \in M$ the set $J^+(p) \cap J^-(q)$ is compact. 
\end{itemize}

The next result is known for smooth Lorentzian spacetimes (see for instance \cite{minguzzi2019lorentzian}) and the proof works similarly. Alternatively, Minguzzi's result on closed cone structures \cite[Theorem 2.47]{minguzzi2019causality} implies the following lemma.
\smallskip

\begin{lemma}\label{simple_to_strong_causal}
    If $(M,g)$ is causally simple then $(M, g)$ is strongly causal.
\end{lemma}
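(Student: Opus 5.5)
We argue by contradiction, following the classical proof for smooth spacetimes; the only ingredients are local, namely the existence of cone-controlled neighbourhoods as in Lemma \ref{coordinates_causal_curves_monotone}. Suppose $(M,g)$ is causally simple but strong causality fails at some $p\in M$. Then there is a neighbourhood $U$ of $p$ such that for every neighbourhood $V\subset U$ of $p$ some classically causal future directed curve has both endpoints in $V$ but leaves $U$. We may shrink $U$ to a coordinate neighbourhood as in the proof of Lemma \ref{coordinates_causal_curves_monotone}. On it $g\prec m_2$ for a constant flat metric $m_2$, so every $g$-causal curve in $U$ is $m_2$-causal and its endpoints satisfy $y_i(p^1)\le y_i(p^2)$. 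We also fix a slightly smaller compact coordinate ball $\overline{B}\subset U$ around $p$.

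Choose neighbourhoods $V_n\downarrow\{p\}$ and curves $\gamma_n$ from $x_n\in V_n$ to $z_n\in V_n$ leaving $U$. Let $q_n$ be the first point where $\gamma_n$ meets $\partial B$, so $x_n\le q_n\le z_n$. By compactness of $\partial B$ we may pass to a subsequence with $q_n\to q\in\partial B$, and $q\ne p$. Since $x_n\to p$, $z_n\to p$ and $\le$ is closed (causal simplicity), we get $p\le q$ and $q\le p$ in the limit.

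Next we turn $p\le q\le p$ into a closed causal curve. By definition of $\ell_g$, the relation $p\le q$ means there is a Lipschitz classically causal future directed curve from $p$ to $q$, and likewise one from $q$ back to $p$. Concatenating them gives a closed classically causal curve through $p$ and $q$. It is non-constant because $q\neq p$. This contradicts the first clause of causal simplicity, so strong causality holds.

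The main obstacle is the closing step: making sure that $\le$ really means \emph{joinable by a classically causal curve}, and that the concatenation is non-constant. The first point follows from how $\ell_g$ was defined, as a supremum over a set that is empty exactly when $\ell_g=-\infty$. The second uses $q\in\partial B$, hence $q\ne p$. Note also that we do not need the $q_n$-curves to converge, since closedness of $\le$ replaces any limit curve theorem; this is why we pass through the relation rather than through the curves themselves.
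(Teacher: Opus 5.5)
Your proof is correct and follows essentially the same route as the paper: take first exit points $q_n$ on the boundary of a compact set around $p$, extract a limit $q\neq p$, use closedness of $\le$ to get $p\le q\le p$, and concatenate to obtain a non-constant closed causal curve (the paper uses $\partial U$ for a relatively compact $U$, you use $\partial B$ for a compact ball $\overline{B}\subset U$). The only difference is cosmetic: your initial reduction to a cone-controlled coordinate neighbourhood with $g\prec m_2$ is never used and can be dropped.
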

\begin{proof}
    Suppose for a contradiction that $(M, g)$ is not strongly causal. Then there exists a $p\in M$ and an open set $U \subset M$ such that $p \in U$ and such that for every open set $V\in M$ with $p \in V$ there exist $q^-, q^+ \in V$ and a classically causal and future directed curve $\gamma:[0,1] \to M$ with $\gamma_0=q^-$ and $\gamma_1 = q^+$ that leaves $U$.
    By possibly shrinking $U$ we may assume that $U \ni p$ is relatively compact. For $n \in \N$, fix open sets $V_n$ such that $p \in V_{n+1} \subset V_n \subset U$ and such that $\bigcap_{n \in \N} V_n = \{p\}$.
    Now, by our assumption, there exist $q_n^-, q_n^+ \in V_n$ and a causal curve $\gamma_n:[0,1] \to M$ from $q_n^-$ to $q_n^+$ that leaves $U$. Denote 
    \begin{align*}
        q_n^0:= \gamma\big(\min\{t \in [0,1]: \gamma_t \notin U\}\big) \in \partial U.
    \end{align*}
    By potentially passing to a subsequence, the relative compactness of $U$ and the choice of the $V_n$ yields that 
    \begin{align*}
        \lim_{n \to \infty} q_n^- = \lim_{n \to \infty} q_n^+ = p,
    \end{align*}
    and that there exists a $q \in \partial U$ such that 
    \begin{align*}
        \lim_{n \to \infty} q_n^0 = q.
    \end{align*}
    Moreover, we have that $q_n^- \leq_g q_n^0 \leq_g q_n^+$, hence the fact that $\leq_g$ is closed yields that 
    \begin{align*}
        p \leq_g q \leq_g p.
    \end{align*}
    However, $p \notin \partial U$, hence $q \neq p$. Thus, there exists a non-constant closed classically causal curve, namely starting at $p$ and passing through $q$. This contradicts the causal simplicity and hence proves the lemma. 
\end{proof}
\begin{assumption}
    From now on we will assume $(M, g)$ to be causally simple.
\end{assumption}
\smallskip

\begin{remark}\label{remark_causal_simplicity}
    The assumption of causal simplicity is of technical nature, as causal simplicity is necessary to enter the framework of a Polish metric measure spacetime as in \cite{beran2024nonlinear}. Indeed, it is needed for $\{\ell_g \geq 0\}$ to be closed in the manifold topology. However, strong causality is implied by Lemma \ref{simple_to_strong_causal} and suffices for our proofs. By \cite[Proposition 3.3]{samann2016global}, any globally hyperbolic continuous spacetime is causally simple and strongly causal.
\end{remark}
From now on, we will consider the metric measure spacetime $(M, \tau_h, \ell_g, \vol_g)$.  When speaking of causal paths we refer to maps $\gamma:[0,1] \to M$ such that for each $s, t \in [0,1]$ with $s \leq t$, it holds $\gamma_s \leq_g \gamma_t$ (these are the future directed and causal paths as defined in \cite{beran2024nonlinear}). Note that the notion of a causal path uses the definition of $\ell_g$ based on classically causal curves. When speaking of a causal curve, we refer to continuous causal paths.
\smallskip

A priori, all we know is that the class of classically causal curves is contained in the class of causal paths. We quickly notice the following useful fact.
\smallskip

\begin{lemma}
    Let $(M, g)$  be a spacetime and $V \subset U \subset M$ neighbourhoods such that every classically causal curve with endpoints in $V$ is contained in $U$. Then, every causal path with endpoints in $V$ is contained in $U$.
\end{lemma}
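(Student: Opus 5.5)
The idea is to reduce the statement about arbitrary causal paths to the hypothesis about classically causal curves. A causal path is only required to satisfy $\gamma_s \leq_g \gamma_t$ for $s \leq t$. By definition of $\leq_g$, this means $\ell_g(\gamma_s,\gamma_t) \geq 0$, so the supremum defining $\ell_g$ is over a nonempty set: there is a Lipschitz classically causal future directed curve from $\gamma_s$ to $\gamma_t$. The proof therefore rests on this unwinding. Any causal relation between two points is realised by a classically causal curve joining them.

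Let $\gamma\colon[0,1]\to M$ be a causal path with $\gamma_0,\gamma_1\in V$, and fix $t\in[0,1]$. We want to show $\gamma_t\in U$. Since $0\le t\le 1$, causality of $\gamma$ gives $\gamma_0\leq_g\gamma_t$ and $\gamma_t\leq_g\gamma_1$. We pick classically causal curves $\sigma_1$ from $\gamma_0$ to $\gamma_t$ and $\sigma_2$ from $\gamma_t$ to $\gamma_1$.

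Next we concatenate them, reparametrising to $[0,1]$ by running $\sigma_1$ on $[0,1/2]$ and $\sigma_2$ on $[1/2,1]$ at doubled speed. The result $\sigma$ is Lipschitz, hence absolutely continuous. Almost everywhere its derivative is a positive multiple of the derivative of $\sigma_1$ or $\sigma_2$, so it is causal and future directed or zero. Thus $\sigma$ is a classically causal curve with endpoints $\gamma_0,\gamma_1\in V$. By hypothesis $\sigma([0,1])\subset U$, and in particular $\gamma_t=\sigma(1/2)\in U$. Since $t$ was arbitrary, $\gamma([0,1])\subset U$.

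There is no real obstacle. The only points to check are that $\leq_g$ is genuinely witnessed by a classically causal curve and that concatenation preserves classical causality. For the first, when $x=y$ the constant curve is allowed by Remark \ref{causal_curves_constant_remark}, so this case is fine as well. For the second, multiplying a future directed causal vector by a positive scalar keeps it future directed causal. Note that no continuity of $\gamma$ is used, so the argument applies to arbitrary causal paths, not just curves.
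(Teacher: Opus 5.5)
Your proof is correct and is essentially the paper's argument. Both join $\gamma_0$ to $\gamma_t$ and $\gamma_t$ to $\gamma_1$ by Lipschitz classically causal curves and concatenate them into a classically causal curve with endpoints in $V$ passing through $\gamma_t$; the paper phrases this as a contradiction, while you argue directly.
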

\begin{proof}
    Suppose there exists a causal path $\gamma\colon [0,1] \to M$ with $\gamma_0, \gamma_1 \in V$ and such that there is a $t \in (0,1)$ with $\gamma_t \notin U$. By the definition of causal paths, we have that $\gamma_0 \leq_g \gamma_t \leq_g \gamma_1$. Hence, there exist Lipschitz continuous classically causal curves $\lambda^1, \lambda^2\colon [0,1] \to M$ such that $\lambda^1_0 = \gamma_0$, $\lambda^1_1=\lambda^2_0 = \gamma_t$, and $\lambda^2_1 = \gamma_1$. Then, the concatenation $\lambda$ of $\lambda^1$ and $\lambda^2$ is a classically causal curve with endpoints in $V$ that leaves $U$. This is a contradiction and proves the lemma. 
\end{proof}

This shows that the strong causality assumption also holds for causal paths, not only for classically causal curves. Similarly, the causal simplicity assumption holds for causal paths as well. 
\smallskip

\begin{lemma}\label{criterion_spacelike_vectors}
    Let $p \in M$ and $0 \neq w \in T_pM$ be spacelike. Then there exist timelike vectors $u, v \in T_pM$ such that one of them is future directed and the other one past directed and such that $g( u, w), g(v, w) >0$. 
\end{lemma}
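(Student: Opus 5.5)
This is pointwise linear algebra in the single inner product space $(T_pM, g(p))$. The plan is to pick an orthonormal basis adapted to $w$ and write the two timelike vectors down explicitly.

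Since $g(p)$ is a Lorentzian inner product and $w \neq 0$ is spacelike, we have $g(w,w)>0$. Set $e_1 := w/\sqrt{g(w,w)}$. The orthogonal complement $e_1^\perp$ has dimension $d$ and, by Sylvester's law of inertia, signature $(-,+,\ldots,+)$. Hence we can choose a unit timelike vector $e_0 \in e_1^\perp$ with $g(e_0,e_0)=-1$ and $g(e_0,e_1)=0$. If $e_0$ is past directed with respect to the time orientation $X(p)$, we replace it by $-e_0$, so that $e_0$ is future directed.

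Next we tilt $\pm e_0$ slightly towards $w$:
\begin{align*}
    u := e_0 + \tfrac12 e_1, \qquad v := -e_0 + \tfrac12 e_1 .
\end{align*}
Using orthogonality, $g(u,u) = g(v,v) = -1 + \tfrac14 < 0$, so both vectors are timelike. Also $g(u,w) = g(v,w) = \tfrac12\sqrt{g(w,w)} > 0$.

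It remains to check the time orientation. Write $a:= g(e_0, X(p))<0$, which holds because $e_0$ is future directed. Then $g(u, X(p)) = a + \tfrac12 g(e_1,X(p))$ and $g(v,X(p)) = -a + \tfrac12 g(e_1,X(p))$. These signs are not automatic, so I would avoid this computation. Instead I would use a connectedness argument: $u$ and $e_0$ are joined by the segment $e_0 + s e_1$, $s\in[0,\tfrac12]$, which consists entirely of timelike vectors. The future cone and the past cone are the two connected components of the set of timelike vectors, so $u$ lies in the same cone as $e_0$, i.e. it is future directed. The same argument along $-e_0 + s e_1$ shows that $v$ lies in the same cone as $-e_0$, i.e. it is past directed.

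The only genuinely delicate point is this time-orientation claim. A cleaner alternative is to choose $X(p)$ itself as $e_0$ after Gram--Schmidt. Replace $e_0$ by the normalised component of $X(p)$ orthogonal to $e_1$, namely $X(p) - \frac{g(X(p),e_1)}{g(e_1,e_1)}\,e_1$. This component is timelike, since $X(p)$ is timelike and we only subtract a spacelike part orthogonal to it, and it is future directed. Here the key fact is that $g(\cdot,X(p))$ has constant sign on each timelike cone, which follows from the reverse Cauchy--Schwarz inequality \eqref{reverse_cauchy_schwartz}. Either version finishes the proof.
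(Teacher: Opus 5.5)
Your proof is correct and follows essentially the paper's route. The paper also takes a timelike vector orthogonal to $w$: after normalising $g(p)=\diag(-1,1,\ldots,1)$ and rotating to reduce to $d=1$, it uses $\vartheta=(w_1,w_0)$, and tilts both $\vartheta$ and $-\vartheta$ by a small multiple $\pm\alpha e_1$ chosen so each stays in its cone, giving $g(w,\cdot)=\alpha|w_1|>0$. Your adapted orthonormal basis with $e_1=w/\sqrt{g(w,w)}$ makes the tilt explicit, with no small $\alpha$ to choose, and your continuity argument (that $g(\cdot,X(p))$ does not vanish on timelike vectors) is exactly what justifies the paper's ``hence in the same cone'' step.
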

\begin{proof}
    As this is a statement about the tangent space, we may assume to be working in $\R^{d+1}\cong T_pM$. Furthermore, we may assume the coordinates to be chosen in a way that $g(p) = \mathrm{diag}(-1, 1, \ldots, 1)$.
    Write $w = (w_0, \ldots, w_d) \in \R^{d+1}$.
    By applying a rotation, we may assume that $w_2 = \ldots = w_d = 0$. This reduces the problem to the case $d=1$ and $g= \mathrm{diag}(-1, 1)$.\\
    Now as $w$ is spacelike, we get that $w_1^2 > w_0^2$.
    Define the vectors $\vartheta := (w_1, w_0), \vartheta':=-\vartheta= (-w_1, -w_0)$ and note that $\vartheta, \vartheta'$ are both timelike but one is future directed and the other one past directed. Moreover, it holds that $g(w, \vartheta) = g(w, -\vartheta) =0$. Fix an $\alpha >0$ such that $\vartheta + [-\alpha, \alpha]e_1$ is timelike and hence in the same cone (future or past) as $\vartheta$ and such that similarly $\vartheta' + [-\alpha, \alpha]e_1$ is timelike and hence in the same cone as $\vartheta'$.
    Now we distinguish cases:\\
    \textbf{Case 1.} $w_1>0$. \\ 
    Then $u = \vartheta + \alpha e_1$ and $v = \vartheta' + \alpha e_1$ yield the desired. Indeed,
    \begin{align*}
        &g(w,u) = g(w, \vartheta) + \alpha w_1 = \alpha w_1 >0, \\
        &g(w,v) = g(w, \vartheta') + \alpha w_1 = \alpha w_1 >0.
    \end{align*}
    \\
    \textbf{Case 2.} $w_1<0$. \\
    In this case, $u = \vartheta - \alpha e_1$ and $v = \vartheta' - \alpha e_1$ yield the desired.
    This proves the lemma.
\end{proof}
\smallskip

\begin{lemma}\label{cauchy_schwartz_optimiser}
   Let $p \in M$ and fix a local trivialisation that identifies $T_pM$ with $\R^{d+1}$. Let $w \in T_pM$ be causal and past directed and let $\e>0$ and $\beta>0$. Then there exists a timelike and future directed vector $v \in T_pM$ such that $\beta \leq |v|_{euc} \leq 2\beta$ and such that
   \begin{align*}
       \langle v, w\rangle_g \leq \e|v|_g + |v|_g|w|_g.
   \end{align*}
\end{lemma}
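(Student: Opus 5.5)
The plan is to construct $v$ explicitly, close to the ray spanned by $-w$, and then rescale it. Two observations drive this. First, $w$ is past directed and $v$ must be future directed, so by the reverse Cauchy–Schwartz inequality \eqref{reverse_cauchy_schwartz} we always have $\langle v,w\rangle_g \ge |v|_g|w|_g>0$; the required inequality is an approximate equality case. Second, both sides of the target inequality are positively $1$-homogeneous in $v$. So it suffices to find some future directed timelike $v\neq 0$ satisfying the inequality, and then replace it by $\lambda v$ with $\lambda:=\beta/|v|_{euc}>0$. This gives $|\lambda v|_{euc}=\beta\in[\beta,2\beta]$ and preserves timelikeness, time orientation and the inequality. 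We split according to the causal character of $w$; note $w\neq 0$, since $0$ is spacelike by convention.

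\textbf{Case 1: $w$ timelike.} Take $v:=-w$. It is timelike and future directed, since $g(-w,X)=-g(w,X)<0$ because $w$ is past directed. We then have $\langle v,w\rangle_g=-g(w,w)=|w|_g^2=|v|_g|w|_g$. So the inequality holds with room $\e|v|_g\ge 0$ to spare.

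\textbf{Case 2: $w$ null.} Fix a future directed timelike vector $T\in T_pM$, for instance $T=X(p)$. Since $-w$ is future directed null, $c:=g(T,w)=-g(T,-w)>0$; this uses that two future directed causal vectors, one of them timelike, have strictly negative inner product. For $\delta>0$ set $v_\delta:=-w+\delta T$.

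We first check that $v_\delta$ is future directed timelike. Expanding,
\[
g(v_\delta,v_\delta)=g(w,w)-2\delta g(w,T)+\delta^2g(T,T)=-2\delta c-\delta^2|T|_g^2<0,
\]
and $g(v_\delta,X)=-g(w,X)+\delta g(T,X)<0$.

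Next we compute both sides of the target inequality. On the left, $\langle v_\delta,w\rangle_g=\delta c$. On the right, $|v_\delta|_g|w|_g=0$ and
\[
|v_\delta|_g=\sqrt{2\delta c+\delta^2|T|_g^2}\ge\sqrt{2\delta c}.
\]
So it suffices that $\delta c\le \e\sqrt{2\delta c}$, i.e.\ $\delta\le 2\e^2/c$. Fix such a $\delta$ and rescale as above.

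The only point needing care is Case 2. There the leading term $|v|_g|w|_g$ vanishes, and one must see that $\langle v,w\rangle_g$ is of order $\delta$ while $|v|_g$ is of the larger order $\sqrt\delta$, so the slack $\e|v|_g$ absorbs it. The remaining steps are elementary sign checks using the time orientation $X$.
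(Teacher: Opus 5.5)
Your proof is correct and follows essentially the same route as the paper. In the timelike case you take a positive multiple of $-w$, and in the null case you perturb $-w$ by $\delta$ times a future directed timelike vector, so that $\langle v_\delta,w\rangle_g$ is of order $\delta$ while $|v_\delta|_g$ is of order $\sqrt{\delta}$; the paper's $v_\delta=(b+\delta,b,0,\ldots,0)$ in normalised coordinates is exactly $\tfrac{b}{a}(-w)+\delta e_0$. The only difference is cosmetic: you work coordinate-free and fix the Euclidean norm at the end by positive homogeneity, whereas the paper pre-scales via $b$ and then takes $\delta$ small enough that $|v_\delta|_{euc}\in[\beta,2\beta]$.
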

\begin{proof}
    If $w$ is timelike then for any $\lambda>0$, we have that $v_\lambda := -\lambda w$ is timelike and future directed and 
    \begin{align*}
        \langle v_\lambda, w\rangle_g =|v_\lambda|_g|w|_g.
    \end{align*}
    Now we can pick $\lambda$ such that $2\beta =|v_\lambda|_{euc}$. 
    
    Now assume that $w$ is null. Write $w=(w_0, \ldots, w_d) \in \R^{d+1}$. Applying linear change $L$ of coordinates, we may again assume that (in the new coordinates after applying $L$) $g(p) = \diag(-1, 1, \ldots, 1)$ and that $w_2= \ldots=w_d=0$. 
    We may furthermore assume that $e_0$ is future directed and that $w=(-a, -a, 0, \ldots, 0)$ for some $a >0$.
    Fix some $b >0$ such that $|L^{-1}(b, b, 0,\ldots, 0)|_{euc}=3\beta/2$. 
    Define the timelike and future directed vector $v_\delta := (b+\delta, b, 0, \ldots, 0)$ for $\delta >0$. We can compute that 
    \begin{align*}
        g(v_\delta,v_\delta) = -2b\delta-\delta^2 \ \mathrm{and}\ |v_\delta|_g = \sqrt{2b\delta+\delta^2}.
    \end{align*}
    Furthermore, we get that 
    \begin{align*}
        \langle v_\delta, w \rangle_g = a \delta.
    \end{align*}
    There exists a $\delta_0 >0$ such that for $\delta \in (0, \delta_0)$, it holds
    \begin{align*}
         \langle v_\delta, w \rangle_g = a \delta \leq \e\sqrt{2b\delta + \delta^2} = \e |v_\delta|_g  =  \e |v_\delta|_g + |w|_g |v_\delta|_g.
    \end{align*}
    There exists a $\delta_1 >0$ such that for $\delta \in (0, \delta_1)$, it holds that $\beta \leq |L^{-1}v_\delta|_{euc} \leq 2\beta$. Then for any $\delta \in (0, \min(\delta_0, \delta_1))$, we get that $v_\delta$ satisfies the desired properties.  
\end{proof}
Many of the following arguments will be local, so we will often work in small neighbourhoods $U \subset M$ that may be assumed to be contained in one coordinate patch. To keep notation short, we will oftentimes assume $U \subset \R^{d+1}$ without explicitly mentioning (pushforwards by) a chart $\psi: U \to \R^{d+1}$.
\smallskip

\begin{lemma}\label{derivative_causal_where_exists}
    Let $\gamma\colon[0,1] \to M$ be a causal path. Then for each $t \in [0,1]$, at which $\gamma$ is differentiable, the derivative $\dot{\gamma}_t \in T_{\gamma_t}M$ is either $0$ or causal and future directed.
\end{lemma}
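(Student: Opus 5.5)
Fix $t\in[0,1]$ at which $\gamma$ is differentiable, write $p:=\gamma_t$ and $v:=\dot\gamma_t$, and assume $v\neq 0$. The argument reproduces Claim 2 in the proof of Lemma \ref{classically_causal_curve_non_constant}, now using only the order relation $\leq_g$ and not a classically causal curve. Work in a chart around $p$, identified with an open subset of $\R^{d+1}$. For $\e>0$ define the constant Lorentzian metric
\begin{align*}
  g_\e(w,w):=g(p)(w,w)-\e\, g(p)(X(p),w)^2 .
\end{align*}
Then $g_\e\succ g(p)$, so by continuity of $g$ there is a neighbourhood $U_\e$ of $p$ on which $g_\e\succ g$. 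Shrinking $U_\e$, we may also assume that the constant vector $X(p)$ is $g$-timelike on $U_\e$, so that $g$ and $g_\e$ induce compatible time orientations there.

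Suppose, for contradiction, that $v$ is spacelike or causal and past directed, so that $g(p)(v,v)>0$ or $g(p)(v,X(p))>0$. Since $g_\e\to g(p)$ as $\e\to0$, we can fix $\e>0$ with $g_\e(v,v)>0$ or $g_\e(v,X(p))>0$.

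Next use strong causality, which holds by Lemma \ref{simple_to_strong_causal}. Choose a neighbourhood $V\subset U_\e$ of $p$ such that every classically causal future directed curve with endpoints in $V$ stays in $U_\e$. Since $\gamma$ is differentiable at $t$, it is continuous there, so $\gamma_{t+h}\in V$ for all small $h>0$ (or small $h<0$ if $t=1$). The path is causal, so $\gamma_t\leq_g\gamma_{t+h}$. If $\gamma_t=\gamma_{t+h}$ the difference quotient is $0$, which is harmless. Otherwise $\ell_g(\gamma_t,\gamma_{t+h})\ge0$, so by definition of $\ell_g$ there is a Lipschitz classically causal future directed curve $\lambda$ from $\gamma_t$ to $\gamma_{t+h}$, and by the choice of $V$ it lies in $U_\e$. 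On $U_\e$ we have $g_\e\succ g$, so $\lambda$ is also $g_\e$-classically causal and future directed. The future cone of the constant metric $g_\e$ is convex, so integrating $\dot\lambda$ gives
\begin{align*}
  g_\e(\gamma_{t+h}-\gamma_t,\gamma_{t+h}-\gamma_t)\le 0, \qquad g_\e(\gamma_{t+h}-\gamma_t,X(p))\le 0 .
\end{align*}
Dividing by $h^2$ and $h$ respectively and letting $h\searrow0$ yields $g_\e(v,v)\le0$ and $g_\e(v,X(p))\le0$, contradicting the choice of $\e$. For $t=1$, use $h<0$ together with $\gamma_{t+h}\leq_g\gamma_t$; the same inequalities hold after the sign flip. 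Hence $v$ is $g_\e$-causal and future directed for all small $\e$, and letting $\e\to0$ shows that $v$ is $g(p)$-causal with $g(p)(v,X(p))\le 0$.

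It remains to exclude $g(p)(v,X(p))=0$ for a nonzero causal $v$. Since $X(p)$ is timelike, the reverse Cauchy--Schwarz inequality \eqref{reverse_cauchy_schwartz} gives $|g(v,X(p))|\ge|v|_g|X(p)|_g$, and for nonzero null $v$ the inner product with a timelike vector is nonzero. So $g(p)(v,X(p))<0$, i.e.\ $v$ is future directed. The main obstacle is the localisation step: the causal relation $\gamma_t\leq_g\gamma_{t+h}$ has to be realised by a classically causal curve lying entirely inside the small set $U_\e$ where the cone comparison $g_\e\succ g$ holds, and strong causality is exactly what makes this possible.
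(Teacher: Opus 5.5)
Your proof is correct and follows essentially the same route as the paper's: enlarge the cone at $p$ by a constant metric (your $g_\e$ is the coordinate-free form of the paper's $m_\e=\diag(-1-\e,1,\ldots,1)$), use strong causality to keep the connecting classically causal curve where $g_\e\succ g$, and pass to the limit of difference quotients in the closed convex $g_\e$-future cone. Your explicit handling of $t=1$ (via $h<0$) and your exclusion of $g(p)(v,X(p))=0$ for nonzero causal $v$ fill in small points that the paper leaves implicit.
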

\begin{proof}
    Fix a $t \in [0,1]$ such that $\gamma$ is differentiable at $t$ and denote $p:= \gamma_t$ and $v := \dot{\gamma}_t$. We may pick local coordinates for an open set $ U \ni p$ such that $p \in \R^{d+1}$ and such that $g(p) = \diag(-1, 1, \ldots, 1)$. 
    For $\e >0$, define the Lorentzian metric $m_\e:= \diag(-1-\e, 1, \ldots, 1)$ on $U$. Then for all $\e >0$, we have that there exists an open neighbourhood $U_\e \subset U$ such that $p \in U_\e$ and such that $m_\e \succ g$ on $U_\e$. Moreover, by the strong causality of $(M, g)$, there exists a neighbourhood $p \in V_\e \subset U_\e$ such that every $g$-causal path with endpoints in $V_\e$ is contained in $U_\e$. 

    We argue by contradiction and assume that $v$ is non-zero and spacelike or causal and past directed. Then $g(p)(v,v) >0$ or $g(p)(X(p), v)>0$. As $\lim_{\e \to 0}m_\e = g(p)$, we have that there exists an $\e>0$ small enough such that $m_\e(v, v) >0$ or $m_\e(X(p), v)>0$. Fix such an $\e>0$. Note that for $h_* >0$ small enough, we have that $\gamma_{t+h} \in V_\e$ for any $h\in (0, h_*)$. For such $h$, define
    \begin{align*}
        v_h:= \frac{\gamma_{t+h}-\gamma_t}{h} \in \R^{d+1}. 
    \end{align*}
    As $\gamma$ is causal, we have that $\gamma_{t+h} = \gamma_t +hv_h \geq_g \gamma_t$, hence there exists a Lipschitz continuous, $g$-classically causal curve $\lambda:[0,1] \to M$ such that $\lambda_0 = \gamma_t$, $\lambda_1 = \gamma_{t+h}$. But then $\lambda$ is contained in $U_\e$. Hence, $\lambda$ is $m_\e$-classically causal. 
    But $m_\e$ is constant on $U_\e$ and the causal future cone of $m_\e$ is a convex cone in $\R^{d+1}$, hence, we get that $v_h$ is $m_\e$-causal and future directed for $h \in (0, h_*)$. 
    Now, we have that $v_h \to v$ as $h \to 0$, so the closedness of the $m_\e$-causal future cone implies that $v$ is $m_\e$-causal and future directed. This contradicts our assumption and hence proves the lemma.
\end{proof}
\begin{remark}
 Lemma \ref{derivative_causal_where_exists} shows that in the setting of a manifold with a continuous Lorentzian metric the class of causal paths is a sensible extension of the class of classically causal curves, in the sense that the derivatives of these maps are causal or zero wherever they exist. This observation is in the flavour of \cite[Proposition 5.9]{kunzinger2018lorentzian}. 
\end{remark}

A similar observation as the following one can be found in \cite{beran2024nonlinear}.
\smallskip 

\begin{proposition}\label{causal_function_gradient}
    Let $f$ be a causal function on $M$. Then for each point $p \in M$ at which $f$ is differentiable, the gradient $\nabla_g f(p) \in T_pM$ is either zero or causal and past directed.
\end{proposition}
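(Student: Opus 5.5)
Fix a point $p$ at which $f$ is differentiable. Since differentiability entails finiteness, $f(p)\in\R$, and the differential $\di f(p)\in T_p^*M$ exists. Write $w:=\nabla_g f(p)\in T_pM$, so that $\di f(p)(v)=g(w,v)$ for every $v\in T_pM$. Assume $w\neq 0$. I will show that $w$ is causal and past directed by ruling out the two alternatives: $w$ spacelike, and $w$ causal but future directed. Both cases rest on one observation.

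\textbf{Key observation.} If $v\in T_pM$ is timelike and future directed, then $\di f(p)(v)\ge 0$. To see this, work in a chart around $p$ and consider the straight segment $\gamma_s:=p+sv$ for small $s\in(-s_0,s_0)$. Since $g$ is continuous and $v$ is $g(p)$-timelike and future directed, $v$ stays $g$-timelike and future directed at nearby points (compare with the continuous time orientation $X$). Hence $\gamma|_{[0,s]}$ is a Lipschitz, classically causal, future directed curve, and $p\le_g \gamma_s$ for small $s>0$. Because $f$ is causal, $f(\gamma_s)\ge f(p)$. Dividing by $s$ and letting $s\searrow 0$ gives
\[
\di f(p)(v)=\lim_{s\searrow 0}\frac{f(p+sv)-f(p)}{s}\ge 0 .
\]
Applying the same argument to $-v$, which is past directed, gives $\di f(p)(u)\le 0$ for every timelike past directed $u$. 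Equivalently, $g(w,v)\ge 0$ for all future directed timelike $v$, and $g(w,u)\le 0$ for all past directed timelike $u$.

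\textbf{Case 1: $w$ spacelike and nonzero.} Lemma~\ref{criterion_spacelike_vectors} provides timelike vectors $u,v$, one future directed and one past directed, with $g(u,w)>0$ and $g(v,w)>0$. Whichever of them is past directed then has a strictly positive pairing with $w$, which contradicts the observation.

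\textbf{Case 2: $w$ causal and future directed.} Take any timelike future directed $v$. For two future directed vectors, one timelike and one causal, the pairing is strictly negative: $g(w,v)<0$. This follows from the reverse Cauchy--Schwarz inequality \eqref{reverse_cauchy_schwartz} together with the standard sign fact, which can be checked quickly in an orthonormal frame with $g(p)=\diag(-1,1,\ldots,1)$. This again contradicts the observation. Therefore $w$ is causal and past directed.

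The main obstacle is the key observation, specifically the justification that the straight segment is classically causal, which uses only the continuity of $g$ and of the time orientation. Everything else is linear algebra on $T_pM$.
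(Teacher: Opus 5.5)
Your proof is correct and matches the paper's argument. The straight segment $p+sv$ gives $\langle \nabla_g f, v\rangle_g(p)\geq 0$ for every future directed timelike $v$, Lemma~\ref{criterion_spacelike_vectors} rules out the spacelike case, and a single future directed timelike test vector rules out the future directed causal case. The only difference is minor: in that last case the paper takes $v=X(p)$, so the contradiction $g(w,X(p))<0$ follows straight from the definition of future directedness. Your route instead needs the sign fact, and for null $w$ reverse Cauchy--Schwarz alone would not give strictness, though your orthonormal-frame check does.
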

\begin{proof}
    Take any timelike and future directed vector $v \in T_pM$. Working in a small neighbourhood $U$ around $p$, we may assume that  $U \subset \R^{d+1}$ and identify the tangent spaces with $\R^{d+1}$. 
    Then, by the continuity of $g$, there exists a neighbourhood $V$ around $p$ such that for each $q \in V$, $g(q)(v,v) <0$, where $v$ is seen as a vector in $\R^{d+1}$. Hence, there exists an $s >0$ such that the curve $\gamma: [-s, s] \to M, t \mapsto p+ t v$ is a classically causal and future directed curve.
    Note that $\gamma$ is differentiable at $t=0$. 
    Then, the chain rule yields that 
    \begin{align*}
        \lim_{h\to 0} \frac{f(\gamma_{h})-f(\gamma_0)}{h} = \di f_p(v) = \langle \nabla_g f, v\rangle_g(p).
    \end{align*}
    The causality of $f$ and $\gamma$ yields that 
    \begin{align*}
        \frac{f(\gamma_{h})-f(\gamma_0)}{h} \geq 0,\  \forall 0\neq h \in [-s, s].
    \end{align*}
    Hence, $\langle \nabla_g f, v\rangle_g(p) \geq 0$. As $v$ was chosen arbitrarily, we get that for all timelike and future directed vectors $v \in T_pM$ it holds
    \begin{align*}
        \langle \nabla_g f, v\rangle_g (p) \geq 0.
    \end{align*}
    Now assume $\nabla_g f(p)$ was non-zero and spacelike. Then by Lemma \ref{criterion_spacelike_vectors}, there exists a past directed timelike vector $u \in T_pM$ such that $\langle \nabla_g f, u\rangle_g (p)>0$. But then the vector $v := -u$ is timelike and future directed and satisfies $\langle \nabla_g f, v\rangle_g (p) < 0$, which is a contradiction. Similarly, considering $v = X(p)$, we get that $\nabla_g f(p)$ cannot be causal and future directed.
\end{proof}
\smallskip

\begin{lemma}\label{causal_derivative_of_causal_curves}
     Let $g$ be continuous and strongly causal, $\gamma\colon[0,1] \to  M$ a causal path. Then for each $t \in [0,1)$ where $\gamma$ is differentiable, it holds that 
     \begin{align*}
         \lim_{h \searrow 0} \frac{\ell_g(\gamma_t, \gamma_{t+h})}{h} =|\dot{\gamma}_t|_g.
     \end{align*}
\end{lemma}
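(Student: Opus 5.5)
We prove the two inequalities $\liminf \geq |\dot\gamma_t|_g$ and $\limsup \leq |\dot\gamma_t|_g$ separately, working in a chart around $p:=\gamma_t$ with $g(p)=\diag(-1,1,\ldots,1)$. Set $v:=\dot\gamma_t$. By Lemma \ref{derivative_causal_where_exists}, $v$ is either $0$ or causal and future directed. Throughout we write $\gamma_{t+h}=p+hv_h$ with $v_h\to v$.

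\textbf{Upper bound.} Fix $\e>0$ and let $m_\e:=\diag(-1-\e,1,\ldots,1)$.
\begin{itemize}
\item Choose $U_\e\ni p$ such that $m_\e\succ g$ on $U_\e$, and such that moreover $g\geq m_\e$ pointwise on causal vectors, in the sense that $\sqrt{-g(q)(w,w)}\leq\sqrt{-m_\e(w,w)}$ for all $g(q)$-causal $w$ and $q\in U_\e$. This is possible by continuity of $g$, at the price of enlarging $\e$ slightly, i.e.\ comparing $g(q)$ with $g(p)$ up to $\e$.
\item By strong causality (Lemma \ref{simple_to_strong_causal} together with the subsequent lemma for causal paths), pick $V_\e\subset U_\e$ such that every classically causal curve with endpoints in $V_\e$ stays in $U_\e$.
\item For small $h$, any c.c.\ curve $\lambda$ from $p$ to $\gamma_{t+h}$ lies in $U_\e$. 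Hence
\[L_g(\lambda)\leq L_{m_\e}(\lambda)\leq |hv_h|_{m_\e},\]
by the reverse triangle inequality in the constant metric $m_\e$: the $m_\e$-length of an $m_\e$-causal curve is at most the $m_\e$-length of the chord, by concavity of $|\cdot|_{m_\e}$ on the convex future cone.
\item Taking the supremum over $\lambda$, dividing by $h$, and letting $h\to0$ gives $\limsup\leq|v|_{m_\e}$. Letting $\e\to0$ then gives $\limsup\leq|v|_g$.
\end{itemize}

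\textbf{Lower bound.} If $|v|_g=0$ there is nothing to show, so assume $v$ is timelike. Fix $\e>0$.
\begin{itemize}
\item Take $U_\e$ small enough that $g\succ m'_\e:=\diag(-1+\e,1,\ldots,1)$ there, with the reverse length comparison $|w|_g\geq|w|_{m'_\e}$ for $m'_\e$-causal $w$.
\item For small $h$, $v_h$ is $m'_\e$-timelike, since $v_h\to v$ and $v$ is $m'_\e$-timelike for small $\e$.
\item The straight segment $s\mapsto p+shv_h$, $s\in[0,1]$, is a Lipschitz curve contained in $U_\e$ whose velocity $hv_h$ is $m'_\e$-timelike, hence $g$-timelike and future directed.
\item Therefore
\[\ell_g(p,\gamma_{t+h})\geq \int_0^1|hv_h|_{g(p+shv_h)}\,\di s\geq h|v_h|_{m'_\e}.\]
Dividing by $h$ gives $\liminf\geq|v|_{m'_\e}\to|v|_g$ as $\e\to0$.
\end{itemize}
This half is straightforward and needs no causality assumption.

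\textbf{Main obstacle.} The delicate step is the upper bound. There, $\ell_g$ is a supremum over all c.c.\ curves, which a priori might wander far from $p$ and pick up extra length. Strong causality is exactly what localizes them into $U_\e$, where the constant comparison metric controls them. One must also take care that the comparison $L_g\leq L_{m_\e}$ holds on g-causal vectors uniformly in $U_\e$; I would derive this from the uniform closeness of $g(q)$ to $g(p)$ in operator norm. A further point to check is that the reverse-triangle bound for constant metrics applies to Lipschitz curves. This follows by Jensen-type concavity of $|\cdot|_{m_\e}$ on the future cone, applied to $\int\dot\lambda=hv_h$.
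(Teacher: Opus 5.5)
Your proof is correct and follows essentially the same route as the paper. On a small chart neighbourhood you compare $g$ with the constant metrics $\diag(-1\mp\e,1,\ldots,1)$, use strong causality to confine all competing classically causal curves for the upper bound, and use straight chords for the lower bound. Your direct Jensen/reverse-triangle estimate $L_g(\lambda)\le L_{m_\e}(\lambda)\le |hv_h|_{m_\e}$ replaces the paper's contradiction argument with the auxiliary cone $C$, and it also covers the case $\dot\gamma_t=0$ at once, which the paper treats separately via $\ell_{m_2}\le 2\sfd_{euc}$.
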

\begin{proof}
    Pick a $t \in [0, 1)$ such that $\gamma$ is differentiable at $t$ and denote $p:= \gamma_t \in M$.
    We may restrict to a small neighbourhood $U$ of $p$ and using local coordinates, we may assume that $U \subset \R^{d+1}$ and $g(p) = \mathrm{diag}(-1, 1, \ldots, 1)$. Furthermore, we may possibly shrink $U$ such that it holds \mbox{$g \prec m_2 := \diag(-2,1, \ldots, 1)$} on $U \ni p$ and such that for every $g$-causal vector $v \in TU$ it holds $m_2(v,v) < g(v,v) \leq 0$. 

    We will first consider the case $\dot{\gamma}_t=0$.
    Now, there exist $p^-, p^+\in U$ such that $p \in U':=  (J_{m_2}^-(p^+) \cap J_{m_2}^+(p^-))^\circ \subset U$ (see \cite[Theorem 2.14]{minguzzi2008causal}). Furthermore, by the strong causality of $g$, we can now pick a neighbourhood $V \subset U'$ of $p$ such that every $g$-causal path with endpoints in $V$ is contained in $U'$.  
    Thus, for any $q \in V$, we have that any $g$-classically causal and future directed curve $\lambda$ from $p$ to $q$ is then contained in $U'$. 
    Hence it is also $m_2$-classically causal, future directed, and $L_g(\lambda) < L_{m_2}(\lambda)$, which yields that for $q \in V$, it holds
    \begin{align}\label{tau_vs_taum_zerocase}
        \ell_g(p, q) \leq \ell_{m_2}(p, q).
    \end{align}
    Moreover, note that for $q \in U'$, it holds
    \begin{align}\label{tau_vs_d_zerocase}
        \ell_{m_2}(p, q) \leq 2  \sfd_{euc}(p, q).
    \end{align}
    Now, as $\gamma$ is differentiable and thus continuous at $t$, we have that there exists a $h_0 > 0$ such that for $h \in (0, h_0)$, it holds $\gamma_{t+h} \in V$.
    Hence, considering only $h \in (0, h_0)$, we can apply \eqref{tau_vs_taum_zerocase} and \eqref{tau_vs_d_zerocase} which when passing to the limit yields that
    \begin{align}
        0 \leq \limsup_{h \searrow 0}\frac{\ell_g(\gamma_t, \gamma_{t+h})}{h} \leq \limsup_{h \searrow 0} \frac{2\sfd_{euc}(\gamma_t, \gamma_{t+h})}{h} = 2|\dot{\gamma}_t|_{euc} = 0. 
    \end{align}
    This finishes the proof in the case $\dot{\gamma}_t=0$. 

    From now on we assume that $\dot{\gamma}_t\neq 0$. Then Lemma \ref{derivative_causal_where_exists} ensures that $\dot{\gamma}_t$ is causal and future directed.
    We will show that 
    \begin{align}\label{limsup_liminf}
         \limsup_{h \searrow 0} \frac{\ell_g(\gamma_t, \gamma_{t+h})}{h} \leq |\dot{\gamma}_t|_g, \ \mathrm{and} \ \liminf_{h \searrow 0} \frac{\ell_g(\gamma_t, \gamma_{t+h})}{h} \geq |\dot{\gamma}_t|_g.
     \end{align}
     Working in the neighbourhood $U$ (as above) of $p$ permits to assume $U \subset \R^{d+1}$ and allows us to assume that $p =0$, $g(p)=\diag(-1, 1, \ldots, 1)$, and that $e_0$ is future directed.
    
    By potentially reparametrising $\gamma$, we may assume that $|\dot{\gamma}_t|_{euc} = 1$.
    Identifying the tangent space $T_pM$ with $\R^{d+1}$ as well, we may think of $v := \dot{\gamma}_t$ as a vector in $\R^{d+1}$. 

     We start with the first inequality of \eqref{limsup_liminf} and prove it by contradiction. Suppose for a contradiction that there was an $\e >0$ and a sequence $h_n \in (0, 1-t)$ with $\lim_{n \to \infty} h_n = 0$ and such that for all $n \in \N$, it holds 
    \begin{align*}
        \frac{\ell_g(\gamma_t, \gamma_{t+h_n})}{h_n} > |\dot{\gamma}_t|_g + \e = |v|_{g(p)} + \e = |v|_{g(0)} + \e.
    \end{align*}

Define 
\begin{align}\label{set_delta_causal_speed}
    \delta := \Big(\frac{\e}{12}\Big)^2.
\end{align}
   
    Define the Lorentzian metric $m_{\delta} := \mathrm{diag}(-1-\delta, 1, \ldots, 1)$ on $U$. Then we can find a neighbourhood $U_\delta \subset U$ of $p=0$ such that for all $q \in U_\delta$ it holds
    \begin{align*}
        &\norm{g(0)-g(q)}_{op} \leq \delta ,\ \norm{g(q)-m_{\delta}}_{op} \leq 2\delta , \ m_\delta \succ g(q).
    \end{align*}
    Moreover, we can shrink $U_\delta \ni 0$ to even get that for each $q \in U_\delta$, $w \in T_qM$ $g$-causal, it holds
    \begin{align}\label{m_delta_stronger_than_g}
        -g(q)(w,w) < - m_{\delta}(w,w).
    \end{align}
    Now define the set $C \subset \R^{d+1}$ as
    \begin{align*}
        C:= \left\{ u=(u_0, \ldots, u_d) \in \R^{d+1}, u_0 \geq 0, (1+\delta)u_0^2 -\Big(\sum_{i=1}^d u_i^2\Big) \geq \Big(|v|_{g(0)} + \frac{\e}{3}\Big)^2\Big(\sum_{i=0}^d u_i^2\Big)  \right\}. 
    \end{align*}
    In the spacetime $(\R^{d+1}, m_\delta)$, these are exactly those points $u$ in the causal future of $0$ that satisfy $\ell_{m_\delta}(0, u)^2 \geq (|v|_{g(0)}+\frac{\e}{3})^2\sfd_{euc}(0, u)^2$.
    Applying \cite[Theorem 2.14]{minguzzi2008causal}, we can find an open neighourhood $U_\delta' \subset U_\delta$ that is $m_\delta$-globally hyperbolic, contains $0$, and that is of the form $0 \in (I^+_{m_\delta}(q^-)\cap I^-_{m_\delta}(q^+) )^\circ = U_\delta'$ for $q^-, q^+ \in U_\delta$. By the strong causality of the spacetime $(M, g)$, we can find a neighbourhood $V_\delta \subset U_\delta'$ of $0$ such that any $g$-causal path between two points in $V_\delta$ is contained in $U_\delta'$.
    
   Now, by the continuity of $\gamma$ at $t$, there exists a $h_*>0$ such that $\gamma([t, t+h_*]) \subset V_\delta$. 
   For $n$ large enough, we have that $h_n \leq h_*$, hence $p_n:= \gamma_{t+h_n} \in V_\delta$. By our assumption, there exist Lipschitz continuous $g$-classically causal curves $\lambda_n\colon [0, 1] \to M$ such that $\lambda_n(0)=p=0$ and $\lambda_n(1)= p_n$ and 
   \begin{align}\label{lambda_n_length}
       L_g(\lambda_n) \geq h_n\Big(\frac{\e}{2} + |v|_{g(0)}\Big).
   \end{align}
   Moreover, we have that $\lambda_n([0,1]) \subset U_\delta'$. %Hence, potentially reparametrising, we may assume that they are $L$-Lipschitz, where $L$ is as above and only depends on $U$ and hence on neither $\delta$ nor $\e$.
   By the definition of $U_\delta$ and $U_\delta'$ we have that (for $n$ large enough) the $\lambda_n$ are $m_{\delta}$-classically causal. Then, using \eqref{m_delta_stronger_than_g} and \eqref{lambda_n_length}, we get that
   \begin{align}\label{tau_delta_lower_bound}
      \ell_{m_\delta}(0, p_n) =\ell_{m_\delta}(p, p_n) \geq L_{m_\delta}(\lambda_n) \geq L_g(\lambda_n) \geq h_n\Big(\frac{\e}{2} + |v|_{g(0)}\Big).
   \end{align}
   Moreover, 
   \begin{align}\label{taylor_distance_p_n_p}
       \sfd_{euc}(0, p_n) =\sfd_{euc}(p, p_n) = h_n|v|_{euc} + o(h_n)= h_n + o(h_n).
   \end{align}
   Now, if thinking of the $p_n = ((p_n)_0, \ldots, (p_n)_d)$ as vectors in $\R^{d+1}$, and recalling that in the Minkowski space the maximising curves are direct straight connections, \eqref{tau_delta_lower_bound}, and \eqref{taylor_distance_p_n_p} we get that for $n$ large enough, it holds
   \begin{align}\label{p_n_is_in_C}
       (1+\delta)(p_n)_0^2 -\Big(\sum_{i=1}^d (p_n)_i^2\Big) = \ell^2_{m_\delta}(0, p_n) &\geq h^2_n\Big(\frac{\e}{2} + |v|_{g(0)}\Big)^2 \nonumber \\
       &\geq \Big(\frac{\e}{3} + |v|_{g(0)}\Big)^2 \sfd_{euc}(0, p_n)^2 = \Big(\frac{\e}{3} + |v|_{g(0)}\Big)^2\sum_{i=0}^d (p_n)_i^2.
   \end{align}
Hence, for $n$ large enough, we have that $p_n \in C$.
Note that
\begin{align}\label{m_delta_of_v_vs_g_v}
    |m_{\delta}(v,v)| \leq |g(0)(v,v)| + 2\delta|v|_{euc}^2 = g(0)(v,v) + 2\delta.
\end{align}
Consider
\begin{align*}
    p_h := \frac{\gamma_{t+h}-\gamma_t}{h} \in \R^{d+1} \cong T_pM.
\end{align*}
As $\lim_{h \searrow 0} p_h = v$, \eqref{set_delta_causal_speed} together with \eqref{m_delta_of_v_vs_g_v} yields that 
\begin{align*}
    \lim_{h \searrow 0} |p_h|_{m_\delta} = |v|_{m_\delta} \leq \sqrt{ |v|_{g(0)}^2 + 2\delta|v|^2_{euc}} \leq |v|_{g(0)} + \frac{\e}{6}. 
\end{align*}
Then recalling that $\lim_{h \searrow 0}|p_h|_{euc} \to |v|_{euc} =  1$, we get that for $h$ small enough, it holds
\begin{align*}
     (1+\delta)(p_h)_0^2 -\Big(\sum_{i=1}^d (p_h)_i^2\Big) \leq \Big(|v|_{g(0)} + \frac{\e}{4}\Big)^2\Big(\sum_{i=0}^d (p_h)_i^2\Big).
\end{align*}
As $p_n = h_np_{h_n}$, this contradicts \eqref{p_n_is_in_C} and hence proves the first part of \eqref{limsup_liminf}. 

We will also prove the second part of \eqref{limsup_liminf} by contradiction. Assume there exists an $\e > 0$ and a sequence $h_n \in (0, 1-t)$ such that $\lim_{n \to \infty} h_n = 0$ and such that for all $n$ it holds
\begin{align*}
        0 \leq \frac{\ell_g(\gamma_t, \gamma_{t+h_n})}{h_n} < |\dot{\gamma}_t|_g - \e = |v|_{g(p)}-\e = |v|_{g(0)}-\e.
\end{align*}
First note that this implies $|v|_{g(0)} \geq \e >0$, i.e., $v$ is timelike at $p=0$. 
By the continuity of $g$, there exists an $r > 0$ such that for each $q \in B^{euc}_r(0)$, it holds
\begin{align}\label{liminf_proof_metrics_close}
    \norm{g(q)-g(0)}_{op} \leq \frac{\e}{4}.
\end{align}
Now for $n$ large enough, we have that $\gamma_{t+h_n} \in B^{euc}_r(0)$. 
Define 
\begin{align*}
    v_n:= \frac{\gamma_{t+h_n}-\gamma_{t}}{h_n} \in \R^{d+1},
\end{align*}
and $\lambda_n\colon [0, 1] \to M, s \mapsto sh_nv_n$. 
For $n$ large enough, we have that $\lambda_n([0,1]) \subset B^{euc}_r(0)$. Now note, $v_n \to v$ in $\R^{d+1}$ as $n \to \infty$, so as $v$ is timelike, we have that for $n$ large enough, $\lambda_n$ is a $g$-classically causal and future directed curve. 
But then 
\begin{align}\label{length_estimate_direct_curves}
    \int_0^1 h_n|v_n|_{g(sh_nv_n)}\, \di s = L_g(\lambda_n) \leq \ell_g(\gamma_t, \gamma_{t+h_n}) < h_n(|v|_{g(0)}-\e).
\end{align}
However, by \eqref{liminf_proof_metrics_close}, we get that for $n$ large enough and all $s \in [0,1]$ it holds
\begin{align*}
    ||v_n|_{g(sh_nv_n)}-|v|_{g(0)}|\leq \frac{\e}{2}.
\end{align*}
This contradicts \eqref{length_estimate_direct_curves} and therewith proves the second part of \eqref{limsup_liminf}, which immediately yields the desired result.
\end{proof}

\begin{lemma}\label{lcc_cont_ae}
   Let $\gamma\colon  [0,1] \to M$ be a left continuous causal path. Then $\gamma$ has at most countably many discontinuities. 
\end{lemma}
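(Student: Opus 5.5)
The plan is to mimic the classical argument that a monotone function on an interval has at most countably many discontinuities, using the local coordinates of Lemma \ref{coordinates_causal_curves_monotone} in which causality becomes coordinatewise monotonicity. Since $\gamma$ is left continuous, a discontinuity at $t$ means that either the right limit $\gamma_{t+}$ fails to exist or it exists but differs from $\gamma_t$. It therefore suffices to show that for every $t\in[0,1)$ the right limit exists, and that $\gamma_{t+}\neq\gamma_t$ can occur only for countably many $t$.

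\textbf{Step 1: local monotonicity.} Fix $t\in[0,1]$ and $p:=\gamma_t$. Let $U\ni p$ be a neighbourhood with coordinates $y$ as in Lemma \ref{coordinates_causal_curves_monotone}. By strong causality (Lemma \ref{simple_to_strong_causal}) together with the lemma showing that strong causality transfers to causal paths, choose $V\subset U$ containing $p$ such that every $g$-classically causal curve with endpoints in $V$ lies in $U$. For $s\le s'$ with $\gamma_s,\gamma_{s'}\in V$, the relation $\gamma_s\le_g\gamma_{s'}$ is realised by a classically causal curve, which then stays in $U$, so $y_i(\gamma_s)\le y_i(\gamma_{s'})$ for every $i$. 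Left continuity at $t$ gives an interval $(t-\eta,t]$ mapped into $V$. The delicate part is the right side, where no continuity is available. Here I would use the order: for $s>t$ we have $\gamma_t\le\gamma_s\le\gamma_1$. I would pick a point $q$ with $p\ll q$ and $q\in V$ (for example along a timelike coordinate direction). The candidate claim is that for $s$ slightly larger than $t$ the points $\gamma_s$ stay in $V$, and I expect this to be the main obstacle. I would try to establish it by contradiction: take $s_n\searrow t$ with $\gamma_{s_n}\notin V$ and use left continuity at each $s_n$ to find times $r_n\in(t,s_n)$ at which $\gamma_{r_n}$ lies near $\partial V$ while $\gamma$ stays in $\overline V$ on $(t,r_n]$ (take $r_n$ as a suitable infimum of exit times, together with left limits). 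This gives points $\gamma_{r_n}\in\partial V'$ for a slightly smaller compact coordinate box $V'$. These points lie above $\gamma_t$, and by the monotonicity on $(t,r_n]$ they are decreasing in $n$ in all coordinates. They therefore converge to some $q'\in\partial V'$, and the closedness of $\le$ gives $p\le q'$. A second application of coordinatewise monotonicity forces $q'$ to lie below every $\gamma_s$ for $s>t$ close to $t$, and combining this with $\gamma_{r_n}\to q'$ I want to obtain $q'=p$, which contradicts $q'\in\partial V'$. If this argument turns out to be too fiddly, the fallback is to restrict to the open set of times where $\gamma$ stays in $V$. On that set, monotone coordinate functions have one-sided limits and countably many jumps, and compactness of $[0,1]$ then lets me cover $[0,1]$ by finitely many such intervals, each of which can be handled by the monotone argument.

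\textbf{Step 2: countability.} Once $[0,1]$ is covered by finitely many intervals $I_k$ on which all coordinate functions $y_i\circ\gamma$ (with respect to the chart $U_k$) are monotone nondecreasing, each of these $d+1$ real monotone functions has at most countably many discontinuities. At any other point of $I_k$ all coordinates are continuous, hence $\gamma$ is continuous there. A finite union of countable sets is countable, which proves the lemma.
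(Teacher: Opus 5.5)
Your Step 1 rests on a false claim: a left continuous causal path need not stay in $V$ for $s$ slightly larger than $t$. Take $p\le_g q$ with $q\notin\overline V$, and set $\gamma_s=p$ for $s\le t$ and $\gamma_s=q$ for $s>t$. This is a left continuous causal path with $\gamma_s\notin V$ for every $s>t$. In particular the times $r_n$ in your contradiction argument do not exist, since $\gamma\equiv q\notin\overline V$ on $(t,s_n]$. More conceptually, strong causality lets you take $V$ inside an arbitrarily small $U$, so your claim for every $t$ would make every left continuous causal path continuous; the lemma asserts no such thing.

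The fallback fails for the same reason. Compactness of $[0,1]$ needs, for each $t$, an \emph{open} interval of times mapped into a single $V$, which is the same false right-sided statement. Left continuity only provides half-open intervals $(t-\eta,t]$, which are not open in $[0,1]$.

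Your preliminary reduction to the existence of right limits is also too strong, because right limits need not exist in a causally simple spacetime. Take $M=\{x_1>0\}\subset\R^{1,1}$ with the flat metric $-\di x_0^2+\di x_1^2$, which is causally simple. The path $\gamma_s=(0,1)$ for $s\le t$ and $\gamma_s=(1+(s-t),\,s-t)$ for $s>t$ is left continuous and causal, but $\gamma_s\to(1,0)\notin M$ as $s\searrow t$.

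The missing observation is that neither causality nor the coordinates of Lemma \ref{coordinates_causal_curves_monotone} are needed: left continuity alone suffices.
\begin{itemize}
\item Fix $c>0$ and let $B_c$ be the set of $t$ with $\limsup_{s\to t}\sfd_h(\gamma_s,\gamma_t)>c$.
\item For each $T\in(0,1]$, left continuity gives $\e_T>0$ with $\gamma((T-\e_T,T])\subset B^{\sfd_h}_{c/3}(\gamma_T)$.
\item Hence every $t\in(T-\e_T,T)$ has oscillation at most $2c/3$, so $(T-\e_T,T)\cap B_c=\emptyset$.
\item Consequently, for $T<T'$ in $B_c$ one has $T\le T'-\e_{T'}$. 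The intervals $(T-\e_T,T)$, $T\in B_c\setminus\{0\}$, are therefore pairwise disjoint, and $B_c$ is countable.
\item The discontinuity set is $\bigcup_n B_{1/n}$, which is countable.
\end{itemize}
This is the paper's argument, phrased there as a contradiction via a point $T$ at which $B_c$ accumulates uncountably from the left.

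If you want to keep the monotone route, replace compactness with the Lindel\"of property of the half-open interval topology. Countably many intervals $(t_i-\eta_i,t_i]$ then cover $(0,1]$, and on each of them the coordinate functions are monotone. Your Step 2 then goes through with countable rather than finite unions.
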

The proof of this lemma follows the same strategy as the proof of \cite[Lemma 2.27]{beran2024nonlinear}, with the difference that we immediately assume left continuity of $\gamma$ rather than, as they do in \cite{beran2024nonlinear}, working in a so-called forward metric spacetime.
\begin{proof}
    Recall that we fixed a complete Riemannian metric $h$ on $M$ and denote by $\sfd_h$ the induced Polish distance on $M$. For some $c>0$ we define the set
    \begin{align*}
        B:= \{t \in [0,1]: \limsup_{s \to t} \sfd_h(\gamma_s, \gamma_t)>c\}.
    \end{align*}
    If we can prove that $B$ is countable for any $c >0$, the result follows. Assume for a  contradiction that there exists a $c>0$ such that $B$ is uncountable. 
    
    \textbf{Claim:} There exists a $T \in (0,1]$ such that for all $\e>0$, it holds that $B\cap (T-\e, T]$ is uncountable. \\
    \textit{Proof of the claim.} We argue by contradiction. If the claim was false, then for every $T \in (0, 1]$ there exists an $\e(T)>0$ such that $B \cap (T-\e(T), T]$ is countable. Define 
    \begin{align*}
        a:= \inf\{T \in [0,1]: [T, 1]\cap B \ \mathrm{is\ countable}\}.
    \end{align*}
    We know that $a \leq 1-\e(1)<1$. Assume $a>0$. Then 
    \begin{align*}
        (a, 1] \cap B = \bigcup_{n \in \N}[a+n^{-1}, 1] \cap B \ \mathrm{is\ countable}.
    \end{align*}
    But now, we know that there exists an $\e(a)>0$ such that $B \cap (a-\e(a), a]$ is countable, hence $B \cap [a-\e(a)/2, 1]$ is countable, contradicting that $a$ is infimal with this property. It follows that $a=0$ and hence 
    \begin{align*}
        B \cap [0,1] = \Big(\{0\} \cup \bigcup_{n \in \N} [n^{-1}, 1]\Big)\cap B \ \mathrm{is\ countable}.
    \end{align*}
    This is a contradiction and proves the claim.

    Now fix a $T$ as in the claim. 
    Using that $\gamma$ is left continuous, we get that there exists an $\e>0$ such that
    \begin{align}\label{lcc_close}
     \gamma((T-\e, T]) \subset B^{\sfd_h}_{c/3}(\gamma(T)).   
    \end{align}
    But now, by assumption we have that $(T-\e, T] \cap B$ is uncountable, and hence nonempty, so there exists a $t \in (T-\e, T) \cap B$. By the definition of $B$, there exists an $s \in (T-\e, T)$ such that 
    \begin{align}
        \sfd_h(\gamma_s, \gamma_t) >c. 
    \end{align}
    This contradicts \eqref{lcc_close} and hence proves that $B$ is at most countable, finishing the proof of the lemma.  
\end{proof}
\begin{lemma}\label{lcc_diff_a-e}
      Let $\gamma \in \mathrm{LCC}([0,1], M)$. Then $\gamma$ is differentiable at almost every $t \in [0,1]$.
\end{lemma}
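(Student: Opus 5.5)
We localise, then reduce to the classical fact that monotone real functions are differentiable almost everywhere, using the coordinates of Lemma~\ref{coordinates_causal_curves_monotone}.

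\textbf{Step 1: pass to continuous pieces.} By Lemma~\ref{lcc_cont_ae}, $\gamma$ has at most countably many discontinuities. It therefore suffices to show differentiability almost everywhere on each open interval $I$ on which $\gamma$ is continuous. The complement of the discontinuity set is open, so it is a countable union of such intervals together with a countable set.

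\textbf{Step 2: localise near each continuity point.} Fix such an interval $I$ and a point $t_0\in I$, and set $p=\gamma_{t_0}$. Lemma~\ref{coordinates_causal_curves_monotone} gives a neighbourhood $U$ of $p$ and coordinates $y=(y_0,\dots,y_d)$ on $U$. These coordinates have the property that $y_i$ is nondecreasing along any pair $p^1\le_g p^2$ in $U$ that is joined by a classically causal curve inside $U$.

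By strong causality (Lemma~\ref{simple_to_strong_causal}) we choose a neighbourhood $V\subset U$ of $p$ such that every classically causal curve with endpoints in $V$ stays in $U$. Continuity of $\gamma$ at $t_0$ gives an interval $J\ni t_0$, open in $I$, with $\gamma(J)\subset V$.

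For $s\le t$ in $J$ we have $\gamma_s\le_g\gamma_t$. By the definition of $\ell_g$, these points are joined by a Lipschitz classically causal curve. Both endpoints lie in $V$, so that curve lies in $U$, and hence $y_i(\gamma_s)\le y_i(\gamma_t)$ for every $i$. Each coordinate function $t\mapsto y_i(\gamma_t)$ is therefore monotone nondecreasing and real-valued on $J$.

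\textbf{Step 3: apply Lebesgue's theorem.} By Lebesgue's theorem on monotone functions, each $y_i\circ\gamma$ is differentiable at almost every $t\in J$. Taking the intersection over the finitely many indices $i$, the map $y\circ\gamma$ is differentiable a.e.\ on $J$. Since $y$ is a chart, $\gamma$ itself is differentiable a.e.\ on $J$.

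\textbf{Step 4: globalise.} The intervals $J$ cover $I$. The real line is second countable, so countably many of them suffice, and a countable union of null sets is null. This gives differentiability almost everywhere on $I$, and then on all of $[0,1]$.

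The main subtlety is Step 2, namely making sure the monotonicity from Lemma~\ref{coordinates_causal_curves_monotone} actually applies. That lemma needs a connecting classically causal curve lying inside $U$, and this is exactly what strong causality supplies. It is also the reason continuity on $J$ is needed: it keeps $\gamma(J)$ inside $V$.
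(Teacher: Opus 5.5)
Your local argument in Steps 2--4 is correct and is essentially the paper's. However, Step 1 contains a false claim, and the globalisation depends on it. By Lemma~\ref{lcc_cont_ae} the discontinuity set $D$ of $\gamma$ is countable. A countable set need not be closed, though, so $[0,1]\setminus D$ need not be open.

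In fact $D$ can be dense. In Minkowski space (which is causally simple) take $\gamma_t=(\phi(t),0,\ldots,0)$ with $\phi(t)=\sum_{n:\,q_n<t}2^{-n}$, where $(q_n)$ enumerates $\mathbb{Q}\cap(0,1)$. This is a left continuous causal path with a jump at every rational, so there is no open interval on which $\gamma$ is continuous. Your reduction to ``each open interval $I$ on which $\gamma$ is continuous'' therefore covers nothing. What is left over is the set of irrationals, which is not countable. So the last sentence of Step 4 (``and then on all of $[0,1]$'') does not follow.

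The repair is immediate, because Step 2 never uses continuity on an interval, only continuity at the single point $t_0$. That already gives a relatively open interval $J\ni t_0$ with $\gamma(J)\subset V$, and hence coordinatewise monotonicity of $y\circ\gamma$ on $J$. So drop $I$ altogether and attach such a $J$ to every continuity point $t_0$. The set of continuity points has countable, hence null, complement $D$. Cover it by countably many of the $J$'s using the Lindel\"of property of subsets of the real line. This is exactly what the paper does; it phrases the last step through Lemma~\ref{null_set_local_to_global}, with $P$ the set of continuity points and $Q$ the set of differentiability points.
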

\begin{proof}
     First note that by Lemma \ref{lcc_cont_ae}, $\gamma$ is continuous almost everywhere. Pick a point $s \in [0,1]$ such that $\gamma$ is continuous at $s$. 
    Denote $p:= \gamma_s$ and pick a neighbourhood $U$ around $p$ and coordinates $y$ as in Lemma \ref{coordinates_causal_curves_monotone}.
    By the strong causality, we can find a neighbourhood $W \subset U$ of $p$ such that every curve with endpoints in $W$ is contained in $U$. 
    Now as $\gamma$ is continuous at $s$, there exists an $\e> 0$ such that $\gamma([s-\e, s+\e]) \subset W$. Let $\sigma_1\leq  \sigma_2 \in [s-\e, s+\e]$. 
    By assumption, we have that $\gamma_{\sigma_1} \leq_g \gamma_{\sigma_2}$ hence, they can be joined by a classically causal curve. Moreover, any classically causal curve from $\gamma_{\sigma_1} \in W$ to $\gamma_{\sigma_2} \in W$ must be contained in $U$, hence by our choice of $U$, we get that for all $i =0, \ldots, d$, it holds $y_i(\gamma_{\sigma_1}) \leq y_i(\gamma_{\sigma_2})$.  
    Thus, we get that for every continuity point $s \in [0,1]$ of $\gamma$, there exist coordinates $y$ and an $\e>0$ such that  $y \circ \gamma$ is increasing in every variable on $[s-\e, s+\e]$. Hence, for every continuity point $s \in [0,1]$, there exists an $\e>0$ such that $\gamma$ is differentiable almost everywhere in $[s-\e, s+\e]$.

    Now Lemma \ref{null_set_local_to_global} yields that $\gamma$ is almost everywhere differentiable.
\end{proof}
This leads us to the first important result of this paper, stating that the causal speed can be identified as the causal length of the derivative. This can be seen as a Lorentzian version of \cite[Corollary 3.8]{burtscher2012length}.

\smallskip
\begin{corollary}\label{causal_speed_computed}
    Let $\gamma \in \mathrm{LCC}([0,1], M)$. Then for almost every $t \in [0,1]$, the causal speed is given by 
    \begin{align}
        |\dot{\gamma}_t|= |\dot{\gamma}_t|_g.
    \end{align}
\end{corollary}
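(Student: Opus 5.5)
The corollary follows by intersecting two full-measure subsets of $[0,1]$. The first comes from Definition \ref{def_causal_speed}: the causal speed $|\dot{\gamma}|$ is the density of the absolutely continuous part of the maximal measure $|\dot{\boldsymbol{\gamma}}|$. Hence there is a set $T_1\subset[0,1]$ of full $\mathcal{L}^1$-measure such that for every $t\in T_1$
\begin{align*}
|\dot{\gamma}_t| = \lim_{s \searrow t} \frac{\ell_g(\gamma_t, \gamma_s)}{s-t}.
\end{align*}
This is the right-sided formula from Definition \ref{def_causal_speed}, applied with $\ell=\ell_g$.

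The second set comes from Lemma \ref{lcc_diff_a-e}: since $\gamma\in\mathrm{LCC}([0,1],M)$, there is a set $T_2\subset[0,1]$ of full measure on which $\gamma$ is differentiable. Discarding the point $t=1$ if necessary, we may assume $T_2\subset[0,1)$.

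Fix $t\in T_1\cap T_2$, which is again a set of full measure. Because $(M,g)$ is causally simple, Lemma \ref{simple_to_strong_causal} shows it is strongly causal. Moreover, $\gamma$ is a causal path that is differentiable at $t\in[0,1)$. Lemma \ref{causal_derivative_of_causal_curves} therefore applies and gives
\begin{align*}
\lim_{h\searrow 0}\frac{\ell_g(\gamma_t,\gamma_{t+h})}{h}=|\dot{\gamma}_t|_g.
\end{align*}
If $\dot{\gamma}_t=0$ we read $|\dot{\gamma}_t|_g$ as $0$, as in the first case of that lemma. Writing $s=t+h$, the left-hand side is exactly the limit defining $|\dot{\gamma}_t|$ on $T_1$, so $|\dot{\gamma}_t|=|\dot{\gamma}_t|_g$ for every $t\in T_1\cap T_2$, which proves the claim.

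All the work has already been done in Lemmas \ref{lcc_cont_ae}, \ref{lcc_diff_a-e} and \ref{causal_derivative_of_causal_curves}, so no step here is a serious obstacle. The one point to check is that the almost-everywhere characterisation in Definition \ref{def_causal_speed} really holds for every causal path, and hence for left continuous ones, so that the one-sided limit may be used on a full-measure set. This is what \cite{beran2024nonlinear} provides.
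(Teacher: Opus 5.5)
Your proposal is correct and matches the paper's proof step for step: almost-everywhere differentiability from Lemma \ref{lcc_diff_a-e}, the one-sided limit identity from Lemma \ref{causal_derivative_of_causal_curves}, and the almost-everywhere characterisation of the causal speed from Definition \ref{def_causal_speed}, combined on a common full-measure set of times. You also invoke Lemma \ref{simple_to_strong_causal} explicitly to obtain the strong causality hypothesis, which the paper leaves implicit.
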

\begin{proof}
    By Lemma \ref{lcc_diff_a-e}, we have that $\gamma$ is differentiable almost everywhere. Then by Lemma \ref{causal_derivative_of_causal_curves}, we have that for almost every $t \in [0,1]$, it holds
    \begin{align}
        \lim_{h \searrow 0} \frac{\ell_g(\gamma_t, \gamma_{t+h})}{h} = |\dot{\gamma}_t|_g.
    \end{align}
    To conclude, we use Definition \ref{def_causal_speed} (see also \cite[(2.28)]{beran2024nonlinear}), which states that for almost every $t$, it holds
    \begin{align*}
        \lim_{h \searrow 0} \frac{\ell_g(\gamma_t, \gamma_{t+h})}{h} = |\dot{\gamma}_t|.
    \end{align*}
\end{proof} 
\smallskip

\begin{lemma}\label{visibility}
    Let $M$ be a manifold and $g$ be a continuous Lorentzian metric on $M$. Then $\mathrm{Vis}(M) =M$. 
\end{lemma}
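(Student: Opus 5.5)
To prove $\mathrm{Vis}(M)=M$ (up to $\vol_g$-null sets), I would construct test plans that are made of straight timelike segments in coordinates. The goal is a countable family of test plans whose visible sets $\{\rho_{\boldsymbol{\pi}}>0\}$ cover $M$ up to a null set. Since $\mathrm{Vis}(M)$ is the essential union over all test plans, this gives the claim.

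\emph{Local construction.} Fix $p\in M$ and a chart neighbourhood $U\subset\R^{d+1}$ with $g(p)=\diag(-1,1,\ldots,1)$. Shrink $U$ so that $e_0$ is uniformly $g$-timelike and future directed on $U$, say $-g(q)(e_0,e_0)\geq 1/2$ for $q\in U$. Choose a small cube $Q=(-r,r)^{d+1}$ around $p$ with $Q+[0,1]\,r e_0\subset U$. Define
\begin{align*}
\boldsymbol{\pi}:=\frac{1}{\mathcal{L}^{d+1}(Q)}\int_Q \delta_{\gamma^x}\,\di\mathcal{L}^{d+1}(x),\qquad \gamma^x_t:=x+t r e_0 .
\end{align*}
Each $\gamma^x$ is Lipschitz and classically causal, hence causal: for $s\le t$ the segment from $\gamma^x_s$ to $\gamma^x_t$ is a classically causal curve, so $\gamma^x_s\le_g\gamma^x_t$. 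Each $\gamma^x$ is continuous, so it lies in $\mathrm{LCC}([0,1],M)$. The map $x\mapsto\gamma^x$ is continuous into paths, so $\boldsymbol{\pi}$ is a Borel probability measure. Its marginal $(e_t)_\#\boldsymbol{\pi}$ is normalized Lebesgue measure on the translated cube $Q+tre_0$. Since $\sqrt{-\det g}$ is continuous and positive, bounded above and below on $\overline U$, this gives $(e_t)_\#\boldsymbol{\pi}\le C\vol_g$, so $\boldsymbol{\pi}$ is a test plan.

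\emph{Positivity of $\rho_{\boldsymbol{\pi}}$.} By Corollary \ref{causal_speed_computed}, for $\boldsymbol{\pi}$-a.e.\ $\gamma$ and a.e.\ $t$ we have $|\dot\gamma_t|=|\dot\gamma_t|_g=r\sqrt{-g(\gamma_t)(e_0,e_0)}\ge r/\sqrt2$. Alternatively one can invoke Lemma \ref{causal_derivative_of_causal_curves} directly, since every $\gamma^x$ is differentiable everywhere. Hence the measure $|\dot\gamma_t|\,\di\boldsymbol{\pi}\,\di t$ pushed forward under $\mathsf e$ dominates
\begin{align*}
\frac{r}{\sqrt2}\int_0^1 (e_t)_\#\boldsymbol{\pi}\,\di t .
\end{align*}
This is a measure with density bounded below by a positive constant on $Q$: every point of $Q$ lies in $Q+tre_0$ for all $t$ in a set of positive measure. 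Therefore $\rho_{\boldsymbol{\pi}}>0$ $\vol_g$-a.e.\ on $Q$, i.e.\ $Q\subset\mathrm{Vis}_{\boldsymbol{\pi}}$ up to null sets.

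\emph{Globalisation.} The cubes $Q_p$ form an open cover of $M$, and $M$ is second countable, so countably many $Q_{p_k}$ cover $M$. Each $Q_{p_k}$ is contained, up to a null set, in $\mathrm{Vis}(M)$, hence so is their union, which gives $\vol_g(\mathrm{Invis}(M))=0$. Alternatively, I could form a single test plan $\sum_k 2^{-k}\boldsymbol{\pi}_k$ after normalising compression constants, but the essential-union characterisation avoids needing uniform bounds.

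The only mildly delicate step is the positivity of the density, namely identifying the causal speed of the straight segments. Corollary \ref{causal_speed_computed} (or Lemma \ref{causal_derivative_of_causal_curves}, which uses strong causality) handles this. Measurability of $\boldsymbol{\pi}$ and the compression bound are routine.
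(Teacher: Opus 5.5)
Your proposal is correct and follows essentially the paper's own route: uniformly distributed straight timelike segments in a fixed coordinate direction, bounded compression from the continuity and positivity of $\sqrt{-\det g}$, and Corollary~\ref{causal_speed_computed} to bound the causal speed from below; your Lindel\"of argument makes explicit the globalisation that the paper compresses into ``as $p$ was arbitrary''. One harmless slip: for the one-sided segments $x+tre_0$, the density of $\int_0^1(e_t)_\#\boldsymbol{\pi}\,\di t$ at $y\in Q$ is proportional to $\min(1,1+y_0/r)$, which is positive on $Q$ but not bounded away from zero near the face $\{y_0=-r\}$; since you only use positivity, the conclusion stands.
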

\begin{proof}
    Let $p \in M$. Fix an open neighbourhood $U \subset M$ of $p$ and suppose from now on that $U \subset \R^{d+1}$. Pick a vector $v \in \R^{d+1}$ with $g(p)(v,v)<-\alpha<0$ and such that $v$ is future directed. We may assume that $|v|_{euc}=1$.  
    Then, there exists a neighbourhood $V \subset U$ of $p$ such that $v$ is future directed and $g(q)(v,v)<-\frac{\alpha}{2}<0$ at every point $q \in V$. Let $\theta >0$ such that $B_{4\theta}^{\R^{d+1}}(p) \subset V$. 
    For $y \in B_\theta(p)$, define the causal curve \mbox{$\gamma^y\colon [0,1] \to V, t \mapsto y+\theta(2t-1)v$}. Then $\gamma_y([0,1]) \subset B_{4\theta}(p)$.
    Consider the test plan $\boldsymbol{\pi} \in \mathcal{P}(\mathrm{LCC}([0,1], M))$ given by  
    \begin{align*}
        \di \boldsymbol{\pi}(\gamma):= \left\{ \begin{array}{ll}
            \frac{1}{\mathcal{L}^{d+1}(B_\theta(p))}\di \mathcal{L}^{d+1}(y)  & \mathrm{if}\ \gamma=\gamma^y \ \mathrm{for\ some\ } y \in B_\theta(p), \\
            0 & \mathrm{otherwise.}
        \end{array} \right.
    \end{align*}
   By Corollary \ref{causal_speed_computed}, we get that
     \begin{align*}
        |\dot{\gamma}_t|\di \boldsymbol{\pi}(\gamma)\di t\geq  \left\{ \begin{array}{ll}
            \frac{\sqrt{\alpha}\theta}{2\mathcal{L}^{d+1}(B_\theta(p))}\di \mathcal{L}^{d+1}(y)\di t  & \mathrm{if}\ \gamma=\gamma^y \ \mathrm{for\ some\ } y \in B_\theta(p), \\
            0 & \mathrm{otherwise.}
        \end{array} \right.
    \end{align*}
    Then 
    \begin{align*}
        \di (e_t)_\#|\dot{\gamma}_t|\boldsymbol{\pi} \geq  \frac{\sqrt{\alpha}\theta}{2\mathcal{L}^{d+1}(B_\theta(p))} \mathbbm{1}_{B_\theta(p+\theta(2t-1)v)} \di \mathcal{L}^{d+1},
    \end{align*}
    where $\mathbbm{1}_{B_\theta(p+\theta(2t-1)v)}$ denotes the characteristic funcion of the set $B_\theta(p+\theta(2t-1)v)$.
    Hence, writing $(\mathsf{e}(t, \gamma))_\#|\dot{\gamma}_t|\boldsymbol{\pi}= \rho_{\boldsymbol{\pi}}\di \mathcal{L}^{d+1}$, we get that for every $q \in B_{\theta/8}(p)$, it holds
    \begin{align*}
        \rho_{\boldsymbol{\pi}}(q) \geq \frac{\sqrt{\alpha}\theta}{16\mathcal{L}^{d+1}(B_\theta(p))} >0.
    \end{align*}
    Hence, $p \in \mathrm{Vis}(M)$. As $p$ was arbitrary, this proves the lemma. 
\end{proof}

The next proposition has been proved for the smooth case in \cite[Theorem 1.19]{minguzzi2019lorentzian} and \cite[Theorem A.2]{beran2024nonlinear}. We will closely follow their argument.
\smallskip

\begin{lemma}\label{bounded_variation_neighbourhoods}
    Let $f$ be a causal function. 
    Then for almost every $p \in \{|f|<\infty\}$, there exists a neighbourhood $W$ of $p$ such that the distributional derivative of $f$ restricted to $W$ is given by a co-vector valued, finite Radon measure $\mathcal{D}f$.
    In particular,  $f$ is differentiable almost everywhere on $\{|f|<\infty\}$.
\end{lemma}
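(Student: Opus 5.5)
The approach is to reduce to the classical fact that a function which is monotone in every coordinate direction is locally of bounded variation. I would follow \cite[Theorem A.2]{beran2024nonlinear}, replacing smooth cone estimates by the comparison metric $m_2$ from Lemma \ref{coordinates_causal_curves_monotone}.

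\textbf{Step 1: reduction to interior points of the domain.} By Lemma \ref{visibility}, $\mathrm{Vis}(M)=M$. Lemma \ref{local_finiteness} then shows that for almost every $p\in\{|f|<\infty\}$ there is an open $U\ni p$ with $\vol_g(U\setminus\{|f|<\infty\})=0$. Better, by the proof of that lemma and Lemma \ref{cts_outside_achronal}, we may take a representative of $f$ that is finite and continuous near $p$. Since $\vol_g$ and $\mathcal L^{d+1}$ are mutually absolutely continuous in charts, it suffices to treat such $p$.

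\textbf{Step 2: monotonicity in good coordinates.} Take the neighbourhood $U$ and coordinates $y$ from Lemma \ref{coordinates_causal_curves_monotone}, arranged as in its proof. The future cone of $g$ on $U$ then lies inside the cone of a constant metric $m_2$, and $m_2$ lies inside $C=\{\sum\alpha_i\partial_{y_i}:\alpha_i\ge0\}$. The direction I actually need is the converse: I want each $\partial_{y_i}$ to be $g$-timelike, so that coordinate segments are classically causal curves. For this I would choose the linear change of variables differently. Pick $d+1$ linearly independent vectors $e'_0,\dots,e'_d$ inside the open future $g(p)$-timelike cone. By continuity of $g$ they remain $g$-timelike and future directed on a small neighbourhood $W$ of $p$. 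Use these vectors as the coordinate basis $y$, and shrink $W$ to a coordinate box.

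Then for $q\in W$ and $s\ge 0$ small, the segment $q+s e'_i$ is classically causal, so $f(q+se'_i)\ge f(q)$. Hence $f$, viewed in the $y$-coordinates on the box $W$, is nondecreasing in each variable separately. Since $f$ is finite near $p$, and a compact box has corners that are bounded by monotonicity, $f$ is bounded on a smaller closed box.

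\textbf{Step 3: BV and differentiability.} For a function on a box that is bounded and monotone in each variable, the partial derivative $\partial_{y_i}f$ in the distributional sense is a nonnegative Radon measure. Indeed, for $\varphi\ge0$ in $C_c^\infty$ one has $-\int f\partial_i\varphi=\lim_h\int \frac{f(y+he_i)-f(y)}{h}\varphi\ge0$, and a nonnegative distribution is a measure. Its total mass on the box is bounded by the oscillation of $f$ times the area of a cross section. So $\mathcal Df=(\partial_{y_0}f,\dots,\partial_{y_d}f)$ is a finite covector-valued Radon measure on $W$, which is transported back to the original chart by the linear map.

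For the "in particular", I would use the standard fact that BV functions are approximately differentiable almost everywhere (Calderón–Zygmund). Genuine differentiability needs more, and here the monotonicity helps. At points where the approximate gradient exists, monotone comparison with points along the timelike directions $\pm e'_i$ upgrades approximate differentiability to differentiability, since values are squeezed between those at nearby Lebesgue points in the cone directions. Alternatively, one can cite the theorem that functions monotone in each variable on $\R^{m}$ are differentiable almost everywhere (Chabrillac–Crouzeix / Lebesgue-type results for order-preserving maps with respect to a cone with nonempty interior). Finally, Lemma \ref{null_set_local_to_global} globalises the local almost-everywhere statements to almost every point of $\{|f|<\infty\}$.

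\textbf{Main obstacle.} I expect the main difficulty to be the a.e. differentiability upgrade, if not cited. I would first try the squeeze argument: fix a Lebesgue point $q$ of $\mathcal Df$ with vanishing singular part, using Lemma \ref{singular_measure_lebesgue_point}. Then compare $f(q+z)$ with averages of $f$ over small balls centred at $q+z\pm \eta|z|e$, where $e$ is timelike. These balls lie in the causal past and future of $q+z$, so they bound $f(q+z)$ from above and below up to $O(\eta|z|)$.
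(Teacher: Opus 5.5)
Your proposal follows essentially the same route as the paper: Lemmas \ref{local_finiteness} and \ref{visibility} give a neighbourhood of $p$ on which $f$ is finite, and linear coordinates whose basis vectors stay $g$-timelike and future directed nearby make $f$ monotone in each variable (the paper obtains these coordinates from a comparison metric $m_{1/2}\prec g$). Coordinatewise monotonicity then gives local BV, and a.e.\ differentiability follows from \cite[Theorem 14]{chabrillac2009continuity}; your squeeze argument via Calder\'on--Zygmund is a valid substitute for that citation.

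One correction. The proof of Lemma \ref{local_finiteness} gives continuity of $f$ only \emph{at} $p$, not on a neighbourhood. For example, on Minkowski space $\sum_n 2^{-n}\mathbbm{1}_{\{t>q_n\}}$, with $(q_n)$ an enumeration of $\Q$, is causal, finite, and discontinuous across a dense family of hyperplanes. So you still have to justify the Lebesgue measurability of $f$ before speaking of its distributional derivative. The paper does this via \cite[Theorem 4]{chabrillac2009continuity}, which applies to functions monotone in each variable.
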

\begin{proof}
    Lemma \ref{local_finiteness} and Lemma \ref{visibility} yield that for almost every $p\in \mathrm{Vis}(M)\cap \{|f|<\infty\} = \{|f|<\infty\}$, there exists an open neighbourhood $U$ of $p$ such that $f$ is finite on $U$. We may assume that $U \subset \R^{d+1}$ and that $U$ is relatively compact. We may furthermore assume that $g(p)= \mathrm{diag}(-1, 1, \ldots, 1)$. 
    Now we can find an open neighbourhood $ p \in V\subset U$ such that $m_{1/2}:=\mathrm{diag}(-1, 2, \ldots, 2) \prec g$ on $V$. There exists a basis $b_0, \ldots, b_d$ of $\R^{d+1}$ such that the cone $C:=\{\sum_{i=0}^d \alpha_ib_i | \alpha_0, \ldots, \alpha_d \geq 0\}$ is contained in the causal future cone of $m_{1/2}$. Then, $C$ is contained in the $g$-causal future cone of $T_qM$ for all $q \in V$. Now there exists a linear transformation into coordinates $y = (y_0, \ldots, y_d)$ for $ U \subset \R^{d+1}$ such that $\partial_{y_i} = b_i$ for all $0 \leq i \leq d$.
   % As a last step, we pick an open neighbourhood $W$ of $p$ such that $W \subset V$ and $W$ is $g$-globally hyperbolic.
    In these coordinates, we have that for each $x, z\in V$ with $y_i(x) \leq y_i(z)$ for  all $0 \leq i \leq d$, it holds that $x \leq_g z$. 
    Hence the function $f$ is monotone in each variable, which by \cite[Theorem 4]{chabrillac2009continuity} yields $\mathcal{L}^{d+1}$-measurability (and hence $\vol_g$-measurability). We may assume that in $y$-coordinates, we have that $p=0$ and that $ W:= (-a, a)^{d+1} \subset [-a, a]^{d+1} \subset V$ for some $a >0$. Now for each $\beta_0, \ldots, \beta_d \in [-a, a]$, and all $i \in \{0, \ldots, d\}$, it holds
    \begin{align*}
        t \mapsto f(\beta_0, \ldots, \beta_{i-1}, t, \beta_{i+1}, \ldots, \beta_d), \quad t \in [-a, a],
    \end{align*}
    is a monotone (and hence a bounded variation function in one variable) whose distributional differential is bounded above by $f(a, \ldots, a)-f(-a, \ldots, -a) < \infty$. By \cite[Remark 3.104]{ambrosio2000functions}, this proves that $f$ is locally a bounded variation function, i.e., its distributional derivative is a finite co-vector valued Radon measure $\mathcal{D}f$ on $[-a, a]^{d+1}$. Furthermore, \cite[Theorem 14]{chabrillac2009continuity} states that $f$ is $\mathcal{L}^{d+1}$-almost everywhere differentiable in $[-a, a]^{d+1}$. 
\end{proof}
Before stating the next lemma, it is worth understanding the distributional derivative $\mathcal{D}f$ on $\{|f|<\infty\}$ a little better. Fix $p \in \{|f|<\infty\}$ as in Lemma \ref{bounded_variation_neighbourhoods} and choose an open neighbourhood $W$ of $p$ accordingly. Assume that $W \subset \R^{d+1}$. Fix a vector $v \in \R^{d+1}$ such that $g(p)(v,v)<0$ and shrink $W$ to a neighbourhood $U \ni p$ such that $v$ is future directed and timelike everywhere in $U$. Then, as $\mathcal{D}f$ is the distributional derivative of $f$, we have that for every $\varphi \in C_c^\infty(U)$, it holds
\begin{align*}
    \int_U \varphi \, \di \mathcal{D}f(v) = \lim_{h \searrow 0} \int_U \p(y) \frac{f(y+hv)-f(y)}{h} \, \di \mathcal{L}^{d+1}(y).
\end{align*}
This yields that $\mathcal{D}f(v)$ is the limit measure of the sequence $\frac{f(y+hv)-f(y)}{h} \di \mathcal{L}^{d+1}(y)$, as $h \searrow 0$, in the narrow topology.

\smallskip

\begin{lemma}\label{fundamental_thm_of_calculus_for_measure}
    Let $p, v, U$ be as above.
    Given $q \in U$, $\delta, s >0$ such that $B_{2\delta+2s}(q) \subset U$, it holds that 
    \begin{align*}
        \int_{-s}^s \int_{\overline{B_\delta(q)}} \, \di\mathcal{D}f(v)(y+ tv) \di t \geq \int_{\overline{B_\delta(q)}} f(y+sv)-f(y-sv)\, \di \mathcal{L}^{d+1}(y).
    \end{align*}
\end{lemma}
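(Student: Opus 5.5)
The plan is to make rigorous the formal identity
\[
\int_{-s}^s \frac{\di}{\di t} f(y+tv)\,\di t = f(y+sv)-f(y-sv).
\]
We regularise $f$, apply the fundamental theorem of calculus to the regularisation, and pass to the limit. Positivity of $\mathcal{D}f(v)$ on $U$ then handles the closed-ball boundary and explains why we only obtain an inequality. First I would interpret the left-hand side. For each $t$, the inner integral is $\mathcal{D}f(v)\big(\overline{B_\delta(q)}+tv\big)$, i.e.\ the translate of the measure by $tv$ evaluated on $\overline{B_\delta(q)}$. By Lemma \ref{ball_evaluation_measurable}, this map is measurable and bounded in $t$.

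Next I would note that $\mathcal{D}f(v)\ge 0$ on $U$. Since $v$ is future directed timelike on $U$, the segment $y\mapsto y+hv$ is classically causal for small $h$, so $f(y+hv)-f(y)\ge 0$. Hence the difference quotients are nonnegative, and their narrow limit $\mathcal{D}f(v)$ is a nonnegative Radon measure on $U$.

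For the regularisation, take a standard mollifier $\rho_\eta$ and set $f_\eta=f*\rho_\eta$. For $\eta$ small this is well defined on a neighbourhood of $\overline{B_{\delta+s}(q)}$, thanks to the room $B_{2\delta+2s}(q)\subset U$. It is smooth and satisfies $\partial_v f_\eta=\mathcal{D}f(v)*\rho_\eta\ge 0$. By the classical fundamental theorem of calculus and Fubini,
\[
\int_{\overline{B_\delta(q)}} f_\eta(y+sv)-f_\eta(y-sv)\,\di y=\int_{-s}^s\int_{\overline{B_\delta(q)}}(\mathcal{D}f(v)*\rho_\eta)(y+tv)\,\di y\,\di t .
\]
As $\eta\to 0$, $f_\eta\to f$ in $L^1_{loc}$, because $f$ is BV and hence locally integrable near $q$ by Lemma \ref{bounded_variation_neighbourhoods}. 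So the left side converges to $\int_{\overline{B_\delta(q)}} f(y+sv)-f(y-sv)\,\di\mathcal{L}^{d+1}(y)$.

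On the right side, for fixed $t$ we have $\int_{\overline{B_\delta(q)}+tv}\mathcal{D}f(v)*\rho_\eta\,\di y=\int \rho_\eta*\mathbbm{1}_{\overline{B_\delta(q)}+tv}\,\di\mathcal{D}f(v)$. Since $\rho_\eta*\mathbbm 1_{\overline{B}}\le \mathbbm 1_{B_{\delta+\eta}(q)+tv}$ and the measure is nonnegative, this is at most $\mathcal{D}f(v)\big(\overline{B_{\delta+\eta}(q)}+tv\big)$. Letting $\eta\searrow0$, the sets $\overline{B_{\delta+\eta}(q)}+tv$ decrease to $\overline{B_\delta(q)}+tv$, so continuity from above of the finite measure gives convergence to $\mathcal{D}f(v)(\overline{B_\delta(q)}+tv)$. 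Dominated convergence in $t$ applies, since all these quantities are bounded by $\mathcal{D}f(v)(K)$ for a fixed compact $K\subset U$. Passing to the limit therefore gives $\limsup_{\eta\to0}\text{RHS}\le \int_{-s}^s\mathcal{D}f(v)(\overline{B_\delta(q)}+tv)\,\di t$, which is the claimed inequality.

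The main obstacle is the boundary mass: $\mathcal{D}f(v)$ may charge spheres, so the mollified integrals need not converge to the closed-ball values. This is handled by the one-sided bound from positivity and outer continuity above, and is why the statement is an inequality with closed balls. A second point to check is that the $L^1$ convergence of $f_\eta$ holds for the representative used, which is fine since $f$ is finite on $U$ and BV there.
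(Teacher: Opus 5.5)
Your proof is correct. It shares the paper's overall strategy: regularise, apply a fundamental theorem of calculus to the regularisation, and pass to the limit via upper semicontinuity on closed sets. The difference is the regularisation you use.

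The paper works directly with the difference-quotient measures $\frac{n}{2s}\big(f(y+\tfrac{2s}{n}v)-f(y)\big)\,\di\mathcal{L}^{d+1}(y)$, whose limit is $\mathcal{D}f(v)$ by the discussion preceding the lemma. Upper semicontinuity on the closed sets $\overline{B_\delta(q)}+tv$ bounds the left-hand side below by the $\limsup$ of the corresponding integrals. Fubini plus telescoping then reduce these to averages of $f(y+(t+s)v)-f(y+(t-s)v)$ over $t\in[0,2s/n)$, which are compared with $f(y+sv)-f(y-sv)$.

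You instead mollify, use $\partial_v f_\eta=\mathcal{D}f(v)*\rho_\eta$ and the classical fundamental theorem of calculus, and get exact convergence of the $f$-side from $f_\eta\to f$ in $L^1_{\mathrm{loc}}$. You then prove the closed-set upper semicontinuity by hand from $\mathcal{D}f(v)\geq 0$ and continuity from above.

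The paper's route is shorter and avoids convolving measures. Yours shows transparently that the only loss is mass of $\mathcal{D}f(v)$ on the spheres $\partial B_\delta(q)+tv$. It also makes explicit two limit passages the paper leaves implicit. The first is exchanging $\limsup_n$ with the $t$-integral, which your uniform bound $\mathcal{D}f(v)(K)$ handles. The second is the lower endpoint term: there, monotonicity of $f$ along $v$ alone gives the inequality in the wrong direction, so a continuity argument such as $L^1$-continuity of translations is needed, and your $L^1_{\mathrm{loc}}$ convergence supplies it.

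One small correction: Lemma \ref{ball_evaluation_measurable} gives only $\mathcal{L}^{d+1}$-measurability on $\R^{d+1}$, which does not restrict to the fixed line $t\mapsto q+tv$. Measurability in $t$ of $\mathcal{D}f(v)(\overline{B_\delta(q)}+tv)$, and of the radius-$(\delta+\eta)$ versions in your dominated convergence step, should instead come from their upper semicontinuity in $t$. That holds because $\mathcal{D}f(v)$ is a nonnegative finite measure near these sets.
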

\begin{proof}
    By the upper semi-continuity on closed sets under the narrow convergence of measures, and using a telescoping argument, we get that 
    \begin{align*}
        \int_{-s}^s \int_{\overline{B_\delta(q)}} \, \di\mathcal{D}f(v)(y+ tv) \di t &\geq \limsup_{n \to \infty} \int_{-s}^s \int_{\overline{B_\delta(q)}} \frac{n}{2s}\Big(f\Big(y+tv+\frac{2s}{n}v\Big)-f(y+tv)\Big)\, \di \mathcal{L}^{d+1}(y) \di t \\
        &= \limsup_{n \to \infty}  \int_{\overline{B_\delta(q)}} \int_{-s}^s \frac{n}{2s}\Big(f\Big(y+tv+\frac{2s}{n}v\Big)-f(y+tv)\Big) \, \di t\di \mathcal{L}^{d+1}(y) \\
        & = \limsup_{n \to \infty}  \int_{\overline{B_\delta(q)}} \frac{n}{2s}\int_{[0, \frac{2s}{n})}  f\Big(y+\Big(t+s+\frac{2s}{n}\Big)v\Big)-f(y+(t-s)v) \, \di t\di \mathcal{L}^{d+1}(y)\\
        & \geq  \int_{\overline{B_\delta(q)}} f(y+sv)-f(y-sv) \, \di \mathcal{L}^{d+1}(y),
    \end{align*}
    where the last inequality follows from the fact that $f$ is increasing in $v$-direction.
\end{proof}

\begin{lemma}\label{identify_derivative_with_absolutely_continuous_part}
    Given a causal function $f$ on $(M, g)$ and a neighbourhood $W$ as in Lemma \ref{bounded_variation_neighbourhoods}. Suppose that $W \subset\R^{d+1}$. Denote by $\mathcal{D}f$ the distributional derivative of $f$ in $W$ and split it into its absolutely continuous part with respect to $\mathcal{L}^{d+1}$ denoted by $\mathcal{D}f^a = Df \mathcal{L}^{d+1}$ and its singular part $\mathcal{D}f^s$. Then for almost every $p \in W$, we have that if $f$ is differentiable at $p$, it holds that $\di f(p) = Df(p)$.
\end{lemma}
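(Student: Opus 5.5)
The plan is to test both $\di f(p)$ and $Df(p)$ against a countable dense set of future directed timelike directions $v$, and to show that $\di f(p)(v)=Df(p)(v)$ for every such $v$. Since the timelike vectors form an open cone spanning $\R^{d+1}$, linearity of both covectors then gives $\di f(p)=Df(p)$. Concretely, I fix a countable set $\Gamma\subset B_1(0)$ of vectors that are dense in the open $g(p)$-timelike future cone. Using continuity of $g$ and local compactness, it is enough to work in countably many small neighbourhoods $U$ on which a fixed $v\in\Gamma$ stays future directed timelike. The countably many resulting exceptional null sets are then discarded, or handled via Lemma \ref{null_set_local_to_global}.

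For fixed $v$ and $U$, the starting point is that $\mathcal{D}f(v)$ is a nonnegative measure, since $f$ is increasing along $v$. I write it as $Df(v)\mathcal{L}^{d+1}+\mathcal{D}f^s(v)$. I then restrict to points $p$ satisfying four conditions:
\begin{itemize}
\item $p$ is a Lebesgue point of $Df(v)$, which holds almost everywhere by the Lebesgue differentiation theorem;
\item $\mathcal{D}f^s(v)(B_\delta(p))/\delta^{d+1}\to 0$ as $\delta\to0$, which holds almost everywhere by Lemma \ref{singular_measure_lebesgue_point};
\item $f$ is differentiable at $p$;
\item the Lebesgue point property of $Df(v)$ along the line $t\mapsto p+tv$ holds at $t=0$, as in Lemma \ref{lebesgue_point_line}. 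This may need a truncation, since $Df(v)$ is only $L^1$ and not bounded.
\end{itemize}

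With $p$ fixed, I compare both quantities with the averaged difference quotient
\[
A(\delta,s):=\frac{1}{2s\,\mathcal{L}^{d+1}(B_\delta)}\int_{\overline{B_\delta(p)}} \big(f(y+sv)-f(y-sv)\big)\,\di\mathcal{L}^{d+1}(y).
\]
\emph{Upper bound.} Lemma \ref{fundamental_thm_of_calculus_for_measure} gives $A(\delta,s)\le \frac{1}{2s\,\mathcal{L}^{d+1}(B_\delta)}\int_{-s}^s\mathcal{D}f(v)(\overline{B_\delta(p)}+tv)\,\di t$. Each translated ball lies inside $B_{\delta+s}(p)$. I choose $s=\delta$ and let $\delta\to0$. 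The singular part then contributes $o(\delta^{d+1})/\delta^{d+1}\to0$, and the absolutely continuous part contributes $Df(v)(p)+o(1)$ by the Lebesgue point property on the slightly larger ball $B_{2\delta}$. This gives $\limsup A\le Df(v)(p)$.

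\emph{Lower bound.} I need the reverse inequality. One option is the symmetric telescoping argument with $\liminf$, using lower semicontinuity on open balls together with monotonicity of $f$ in the direction $v$; this should give $\liminf A\ge Df(v)(p)$ on the same scale. Alternatively, I bound $\mathcal{D}f(v)$ from above by a slightly larger averaged difference and use nonnegativity of the singular part.

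\emph{Identification with $\di f(p)$.} It remains to show $A(\delta,\delta)\to \di f(p)(v)$. I would expect this from differentiability of $f$ at $p$, since $f(y\pm\delta v)=f(p)+\di f(p)(y-p\pm\delta v)+o(\delta)$ for $|y-p|\le\delta$. Averaging over $y$, the linear term in $y-p$ cancels by symmetry, and $A=\di f(p)(v)+o(\delta)/\delta\to \di f(p)(v)$.

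The main obstacle is this choice of scale. The lower bound must hold at the coupled scale $s\approx\delta$, and the singular measure must be handled on the thickened sets $\bigcup_t(\overline{B_\delta}+tv)$ and not just on balls. Lemma \ref{singular_measure_lebesgue_point} covers exactly this situation, because the thickened sets lie in $B_{2\delta}(p)$. If the two-sided inequality turns out to be hard to prove directly, I would try a fallback that uses only the inequality $\di f(p)(v)\le Df(v)(p)$ for all $v$ in the open cone. The plan would then be to obtain the reverse inequality by applying the same argument to the absolutely continuous and singular parts together with the total mass identity $\int\mathcal{D}f(v)=\int Df(v)+\mathcal{D}f^s(v)$ and the almost everywhere differentiability of BV functions; I have not checked that these ingredients actually yield the reverse inequality.
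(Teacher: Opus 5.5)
Your argument is correct in substance, but it takes a genuinely different route from the paper. The paper's proof is two lines. By \cite[Theorem 3.83]{ambrosio2000functions}, the $BV$ function $f$ is approximately differentiable at $\mathcal{L}^{d+1}$-a.e.\ point, with approximate differential equal to the density $Df$ of $\mathcal{D}f^a$. At points of classical differentiability the approximate differential coincides with $\di f(p)$.

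You instead identify the two covectors by hand. You test against countably many timelike directions $v$ and compare both sides with the averaged difference quotient $A(\delta,\delta)$. For this you use the nonnegativity of $\mathcal{D}f(v)$, Lemma~\ref{fundamental_thm_of_calculus_for_measure}, Lemma~\ref{singular_measure_lebesgue_point} and Lebesgue differentiation, which is the machinery the paper uses later in Proposition~\ref{weak_subslope_identified}. The paper's route is shorter and uses nothing beyond $f\in BV$, but it rests on a deep structure theorem. Yours is self-contained and elementary.

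Your upper bound and the identification $A(\delta,\delta)\to\di f(p)(v)$ are fine as written. In fact the linear term cancels pointwise in $f(y+\delta v)-f(y-\delta v)$, so no averaging is needed for that. Your fourth, line-wise Lebesgue condition is superfluous: ordinary Lebesgue points of $Df(v)$, used on $B_{2\delta}(p)$, suffice.

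The lower bound, which you leave hedged, does go through along your second option, so the fallback is not needed. The fallback could not close the argument on its own anyway, since a covector that is merely nonnegative on an open cone need not vanish. Here is how the lower bound works.

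\begin{enumerate}
\item The measures $h^{-1}\big(f(\cdot+hv)-f\big)\mathcal{L}^{d+1}$ are nonnegative and converge vaguely to $\mathcal{D}f(v)$. Hence $\mathcal{D}f(v)(O)\le\liminf_{h\searrow0}$ of their mass on every open set $O$.
\item Integrate in $t$, apply Fatou, and telescope as in Lemma~\ref{fundamental_thm_of_calculus_for_measure}. This time bound the telescoped integrand $\frac1h\int_s^{s+h}f(y+uv)\,\di u-\frac1h\int_{-s}^{-s+h}f(y+uv)\,\di u$ from above by $f(y+(s+h)v)-f(y-sv)$, using monotonicity. This gives
\[
\int_{-s}^{s}\mathcal{D}f(v)\big(B_\delta(p)+tv\big)\,\di t\le\int_{B_\delta(p)}\big(f(y+s'v)-f(y-sv)\big)\,\di\mathcal{L}^{d+1}(y)\quad\text{for every } s'>s.
\]
\item Discard the nonnegative singular part on the left. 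Take $s=\delta$ and $s'=(1+\eta)\delta$. The Lebesgue point property and differentiability at $p$ then give $Df(p)(v)\le(1+\eta/2)\,\di f(p)(v)$.
\item Letting $\eta\to0$ gives the reverse inequality.
\end{enumerate}

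There is also a shorter alternative that gives both inequalities at once, for every $v$ and without monotonicity. Test with a smooth bump $\varphi_\delta=\delta^{-d-1}\varphi((\cdot-p)/\delta)$ with $\int\varphi=1$. Then $\int\varphi_\delta\,\di\mathcal{D}f(v)=-\int f\,\partial_v\varphi_\delta\to\di f(p)(v)$ by differentiability at $p$. The left-hand side tends to $Df(p)(v)$ by the Lebesgue point property together with Lemma~\ref{singular_measure_lebesgue_point} applied to $|\mathcal{D}f^s|$.
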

\begin{proof}
    By \cite[Theorem 3.83]{ambrosio2000functions}, we have that for almost every $p$ it holds that $f$ is approximately differentiable at $p$ and the approximate differential equals the density $Df$ of the absolutely continuous part $\mathcal{D}f^a$ of the distributional derivative of $f$. Hence, this holds for almost every $p$, which satisfies that $f$ is differentiable at $p$. But then the approximate differential and the differential are equal at such points, hence $Df = \di f$ almost everywhere.
\end{proof}

From now on, when writing $\di f$ we refer to the absolutely continuous part of the distributional derivative of $f$, which is a covector valued measurable function that is almost everywhere defined and locally integrable in $\{|f| < \infty\}$. 
The following lemma has been proved in Riemannian signature in \cite{mondino2025equivalence}.

\smallskip
\begin{lemma}\label{Lebesguepointsadvanced_lor} 
Let $V \subset M$ be an open set that lies in one coordinate patch.
Let $k \in L^1_{\rm{loc}}(V, \vol_g)$ and assume that $V \subset \R^{d+1}$. Let $x \in V$  and $\theta > 0$, such that $B_{4\theta}^{euc}(x) \subset V$. Let $\Gamma \subset B_\theta^{euc}(0)$ be a countable set.
For $\delta \in (0, \theta)\cap \Q$, $\dot{\gamma} \in \Gamma$,  we define 
    \begin{align*}
        F^k_{\dot{\gamma}, \delta, x}\colon  t \mapsto \frac{1}{\mathcal{L}^{d+1}(\overline{B_\delta(x)})}\int_{\overline{B_\delta(x)}} |\dot{\gamma}|_{g(y + t\dot{\gamma})}k(y + t\dot{\gamma})\,\di\mathcal{L}^{d+1}(y), \quad  t \in (-1, 1).
    \end{align*}
    Then for $\mathcal{L}^{d+1}$-a.e.\;$x \in V$ and all {$\delta \in (0, \theta)\cap \Q$}, $\dot{\gamma} \in \Gamma$, we have that $t=0$ is a Lebesgue point of $ F^k_{\dot{\gamma}, \delta, x}$.
\end{lemma}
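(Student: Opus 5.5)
The plan is to reduce the statement to the classical Lebesgue differentiation theorem on $\R^{d+1}$, applied to a suitably rearranged function, for one fixed pair $(\delta,\dot\gamma)$. Since $\Gamma$ and $(0,\theta)\cap\Q$ are countable, a countable union of null sets is null, so this suffices. Note also that $|\dot\gamma|_{g(\cdot)}$ is only used where it makes sense; we interpret it as $\sqrt{|g(\cdot)(\dot\gamma,\dot\gamma)|}$, which is continuous and locally bounded. Hence $\kappa(z):=|\dot\gamma|_{g(z)}k(z)$ belongs to $L^1_{\rm loc}(V)$, because $\vol_g$ and $\mathcal{L}^{d+1}$ are mutually comparable on compact sets. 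A small technical point is that the condition on $x$ involves $B_{4\theta}(x)\subset V$. We therefore work on the open set $V_\theta:=\{x: B_{4\theta}(x)\subset V\}$, so that all points $y+t\dot\gamma$ with $y\in \overline{B_\delta(x)}$ and $|t|<1$ stay in $B_{2\theta}(x)\subset V$.

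Fix $\delta$ and $\dot\gamma$, and set $c:=\mathcal{L}^{d+1}(\overline{B_\delta(0)})$. By the change of variables $y=x+z$,
\begin{align*}
F^k_{\dot\gamma,\delta,x}(t)=\frac1c\int_{\overline{B_\delta(0)}}\kappa(x+z+t\dot\gamma)\,\di\mathcal{L}^{d+1}(z)=:G(x+t\dot\gamma),
\end{align*}
where $G:=\kappa*\frac1c\mathbbm{1}_{\overline{B_\delta(0)}}$ is a convolution of an $L^1_{\rm loc}$ function with a bounded compactly supported kernel. The key observation is that $G$ is continuous on $V_\theta$. Indeed, $\|G(x)-G(x')\|\leq \frac1c\int|\kappa(\cdot+x)-\kappa(\cdot+x')|\mathbbm{1}_{\overline{B_\delta}}$, which tends to $0$ as $x'\to x$ by continuity of translations in $L^1$ applied to $\kappa$ restricted to a compact neighbourhood. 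Consequently $t\mapsto F^k_{\dot\gamma,\delta,x}(t)=G(x+t\dot\gamma)$ is continuous at $t=0$, and every point of continuity is a Lebesgue point. In this case the conclusion would therefore hold for every $x\in V_\theta$, not just almost every $x$.

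The step I would double-check is whether the averaging over the fixed ball $\overline{B_\delta(x)}$ really centres at $x$. If it does, the statement is immediate as above. If instead the intended object averages over the ball centred at the varying point, or if $k$ is only assumed measurable and locally bounded along lines (as in Lemma \ref{lebesgue_point_line}), I would use the fallback argument. For a.e. $x$, Fubini applied along the lines $x+t\dot\gamma$ shows that the restriction of $\kappa$ to almost every line is in $L^1_{\rm loc}$, and we then apply the one-dimensional Lebesgue differentiation theorem on each line. The resulting exceptional set must be shown to be $\mathcal{L}^{d+1}$-null. For this, we foliate $\R^{d+1}$ into lines parallel to $\dot\gamma$ and note that the set of pairs $(x,t)$ at which $t$ fails to be a Lebesgue point of the line restriction is measurable and null in each fibre. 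By Fubini it is null in $\R^{d+1}$, and translation invariance along the fibres transfers this to the point $t=0$. The measurability of this exceptional set is the main technical obstacle in the fallback, and I would handle it by writing the Lebesgue-point condition as a countable $\limsup$ over rational radii of measurable functions of $(x,t)$.
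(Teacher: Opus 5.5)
Your argument is correct, and it takes a genuinely different and much shorter route than the paper.

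\textbf{The paper's route.} The paper fixes $z$ and applies the one-dimensional Lebesgue differentiation theorem to $t\mapsto F^k_{\dot\gamma,\delta,z}(t)$, which gives that a.e.\ $t$ is a Lebesgue point. The translation identity $F^k_{\dot\gamma,\delta,z}(t_0+\cdot)=F^k_{\dot\gamma,\delta,z+t_0\dot\gamma}(\cdot)$ turns this into ``$t=0$ is a Lebesgue point at a.e.\ point of the line through $z$.'' Fubini over the hyperplane orthogonal to $\dot\gamma$ then gives the statement locally. Finally, the Besicovitch-based Lemma \ref{null_set_local_to_global} globalises it. Countability of $\Gamma$ and of the rational radii is needed to collect the exceptional sets.

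\textbf{Your route.} You observe that $F^k_{\dot\gamma,\delta,x}(t)=G(x+t\dot\gamma)$, where $G(w)=\frac1c\int_{\overline{B_\delta(w)}}\kappa\,\di\mathcal{L}^{d+1}$ is a ball average of an $L^1_{\rm loc}$ function. Such a $G$ is continuous, either by continuity of translations in $L^1$ or by dominated convergence since $\mathcal{L}^{d+1}(\partial B_\delta)=0$. Hence $t\mapsto F^k_{\dot\gamma,\delta,x}(t)$ is continuous on $(-1,1)$, and every $t$ is a Lebesgue point.

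\textbf{What each approach buys.} Yours gives a strictly stronger conclusion: there is no exceptional set at all. It holds for every $x$ with $B_{4\theta}(x)\subset V$, every $\delta\in(0,\theta)$ and every $\dot\gamma\in B^{euc}_\theta(0)$. It also sidesteps the measurability of $\bigcup_{z}N_{\dot\gamma,\delta,z}$, which the paper's Fubini step uses without comment. The paper's line-by-line argument buys uniformity: the same template proves Lemma \ref{lebesgue_point_line}. There, no ball averaging is present and $f$ is merely measurable and locally bounded, so your continuity trick is unavailable.

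The point you flagged for double-checking resolves in your favour. The ball $\overline{B_\delta(x)}$ is centred at $x$, and the shift $t\dot\gamma$ enters only through the integrand. So your fallback, which is essentially the paper's route, is not needed for this statement.
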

Here, $|\dot{\gamma}|_g$ should be interpreted as $\sqrt{|g(\dot{\gamma}, \dot{\gamma})|}$, as we have not made any assumptions on whether $\dot{\gamma}$ is causal.
 \begin{proof}
     As $g$ is continuous, we get that $k \in L^1_{\rm{loc}}(V, \mathcal{L}^{d+1})$.
     Fix $w \in V$, and choose $\theta > 0$ accordingly. Moreover, fix $\delta$ and $\dot{\gamma}$. For any $z \in B^{euc}_\theta(w)$, the Lebesgue differentiation theorem yields that $\mathcal{L}^1$-almost every $t \in (-1, 1)$ is a Lebesgue point of $ F^k_{\dot{\gamma}, \delta, z}$. 
     In other words that means that for $\mathcal{L}^1$-almost every $y$ on the line $\{z+t\dot{\gamma}, |t| <1\}$, $t=0$ is a Lebesgue point of $ F^k_{\dot{\gamma}, \delta, y}$. The set of non-Lebesgue points on that line shall be denoted by $N_{\dot{\gamma}, \delta, z}$, and it holds that $\mathcal{L}^1(N_{\dot{\gamma}, \delta, z})=0$. Let $H_{\dot{\gamma}, w}$ be the $d$-dimensional hyperplane in $\R^{d+1}$ through $w$ that is orthogonal to $\dot{\gamma}$, and intersected with $B^{euc}_{\theta}(w)$. Then, by Fubini's Theorem, it holds
     \begin{align*}
         \mathcal{L}^{d+1}\Big(\bigcup_{z \in H_{\dot{\gamma}, w}}N_{\dot{\gamma}, \delta, z}\Big) = \int_{ H_{\dot{\gamma}, w}} \mathcal{L}^1(N_{\dot{\gamma}, \delta, z}) \, \di\mathcal{L}^{d}(z) = 0. 
     \end{align*}
     Hence for $\mathcal{L}^{d+1}$-almost every $y \in B^{euc}_\theta(w)$, $t=0$ is a Lebesgue point of $ F^k_{\dot{\gamma}, \delta, y}$. Note that, by construction,   $\dot{\gamma}$ and $\delta$  vary in a countable set. This shows the claim in an open neighbourhood around $w$. As $w$ was arbitrary, the proof is complete by Lemma \ref{null_set_local_to_global}. 
 \end{proof}
Under the same assumptions on $k, \theta, \delta, \Gamma$, Lemma \ref{Lebesguepointsadvanced_lor} holds analogously for 
 \begin{align}\label{def_f_tilde_lebesgue_advanced}
        \Tilde{F}^k_{\dot{\gamma}, \delta, x}\colon  t \mapsto \frac{1}{\mathcal{L}^{d+1}(\overline{B_\delta(x)})}\int_{\overline{B_\delta(x)}} k(y + t\dot{\gamma})\,\di\mathcal{L}^{d+1}(y), \quad  t \in (-1, 1).
    \end{align}

The following proposition describes the maximal weak subslope of a causal function $f$. This is the last ingredient we need to prove infinitesimal Minkowskianity.
 \smallskip
 
\begin{proposition}\label{weak_subslope_identified}
    Let $g$ be continuous and strongly causal and $f\colon M \to \overline{\R}$ be a causal function. Then, the maximal weak subslope $|\di f|$ is given by 
    \begin{align}
        |\di f|(p) =G(p):= \left\{\begin{array}{cc}
          |\nabla_g f|_g(p)   & \mathrm{if}\  |f(p)| < \infty\ \mathrm{and\ } f \ \mathrm{is}\ \mathrm{differentiable\ at\ }p, \\
           +\infty  & \mathrm{otherwise}.
        \end{array} \right.
    \end{align}
\end{proposition}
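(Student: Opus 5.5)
The plan is to prove two inequalities: first that $G$ is a weak subslope, so $G \le |\di f|$, and then that $|\di f| \le G$ almost everywhere. For the first, note that $G=+\infty$ on the complement of the set where $f$ is finite and differentiable. By Lemma \ref{bounded_variation_neighbourhoods} this complement, intersected with $\{|f|<\infty\}$, is $\vol_g$-null. On $\{|f|=\infty\}$ the value $+\infty$ is admissible by the conventions and the structure theory of \cite{beran2024nonlinear} (Lemmas \ref{cts_outside_achronal}, \ref{achronal_invisible}, \ref{local_finiteness}).

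So fix a test plan $\boldsymbol{\pi}$. By bounded compression and Fubini, for $\boldsymbol{\pi}$-a.e.\ $\gamma$ and a.e.\ $t$, the point $\gamma_t$ lies in the good set: $f$ is finite and differentiable there, $\gamma$ is differentiable at $t$ (Lemma \ref{lcc_diff_a-e}), and $|\dot\gamma_t| = |\dot\gamma_t|_g$ (Corollary \ref{causal_speed_computed}). At such points the chain rule gives $\frac{d}{dt} f(\gamma_t) = \langle \nabla_g f, \dot\gamma_t\rangle_g$. Proposition \ref{causal_function_gradient} and Lemma \ref{derivative_causal_where_exists} say $\nabla_g f$ is past directed and $\dot\gamma_t$ future directed (or zero). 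Reverse Cauchy--Schwarz \eqref{reverse_cauchy_schwartz} then yields $\frac{d}{dt}f(\gamma_t)\ge |\nabla_g f|_g|\dot\gamma_t|_g$. Since $t\mapsto f(\gamma_t)$ is nondecreasing, $f(\gamma_1)-f(\gamma_0)\ge \int_0^1 \frac{d}{dt}f(\gamma_t)\,\di t$ (the singular part of a monotone function only helps). Integrating in $\boldsymbol{\pi}$ gives the weak subslope inequality.

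Where $\gamma_t$ hits $\{|f|=\infty\}$ on a positive-measure set of times, the left side is $+\infty$ by the conventions. The only point needing care is that a monotone function $f\circ\gamma$ which is infinite at some times still has $f(\gamma_1)-f(\gamma_0)=+\infty$, consistent with the right side being $+\infty$.

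For the reverse inequality, suppose $|\di f|>G+\eta$ on a set $A$ of positive measure inside $\{|f|<\infty\}$. We may take $A$ inside a chart neighbourhood $U$ as above, with $|\di f|\le k$ bounded on $A$ after truncation. Choose a density point $p$ of $A$ that is a Lebesgue point for $\di f$ and $|\di f|$. Require also that $\mathcal{D}f^s(B_\delta(p))=o(\delta^{d+1})$ (Lemma \ref{singular_measure_lebesgue_point}) and that the conclusions of Lemmas \ref{Lebesguepointsadvanced_lor} and \ref{lebesgue_point_line} hold for a countable dense set $\Gamma$ of future timelike directions. By Lemma \ref{cauchy_schwartz_optimiser}, with $w=\nabla_g f(p)$, pick $v\in\Gamma$ (by density, approximately) with $\langle v,\di f(p)\rangle \le \e|v|_g+|v|_g|\nabla_g f(p)|_g$. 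If $\di f(p)=0$ any timelike $v$ works. Build the test plan $\boldsymbol{\pi}_\delta$ of straight segments $y+tsv$, $y\in \overline{B_\delta(p)}$, uniformly distributed, exactly as in Lemma \ref{visibility}. Testing the subslope inequality for $|\di f|$ gives
\[
\int f(y+sv)-f(y-sv)\,\di y \ge \int\int |\di f|(y+tv)|v|_{g}\,\di t\,\di y .
\]
Bound the left side above using Lemma \ref{fundamental_thm_of_calculus_for_measure} by $\int\int \di\mathcal{D}f(v)$. Split this into the absolutely continuous part, which is about $2s\,\mathcal{L}^{d+1}(B_\delta)\,\langle \di f(p),v\rangle$ by the Lebesgue point properties, and the singular part, which is $o(\delta^{d+1})$ after letting $s\to0$ suitably relative to $\delta$ (e.g.\ $s=\delta$). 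Bound the right side below by about $2s\,\mathcal{L}^{d+1}(B_\delta)\,|v|_g(G(p)+\eta)$ using Lemma \ref{Lebesguepointsadvanced_lor} and density of $A$. Dividing and letting $\delta,s\to0$ gives $|v|_g(G(p)+\eta)\le \e|v|_g+|v|_g G(p)$, a contradiction for $\e<\eta$.

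I expect the main obstacle to be this last localisation. The singular part $\mathcal{D}f^s$ is integrated along translates $B_\delta(p)+tv$ rather than a single ball, so Lemma \ref{singular_measure_lebesgue_point} must be used in the form $\int_{-s}^s\mathcal{D}f^s(B_\delta(p+tv))\,\di t \le 2s\,\mathcal{D}f^s(B_{2\delta}(p))$ for $s\le\delta$. This is $2s\cdot o(\delta^{d+1})$, which is $o$ of the main term $2s\,\delta^{d+1}$. Coordinating $s,\delta$ together with the Lebesgue-point limits for the translated averages is where the care lies.
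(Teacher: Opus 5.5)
Your proposal is correct and follows the same architecture as the paper. The weak-subslope half uses the chain rule, reverse Cauchy--Schwarz, and the case split on whether $f\circ\gamma$ ever takes an infinite value. The maximality half uses a test plan of parallel segments in a near-optimal timelike direction $v$, together with Lemma~\ref{fundamental_thm_of_calculus_for_measure}, Lemma~\ref{identify_derivative_with_absolutely_continuous_part} and Lemma~\ref{singular_measure_lebesgue_point}.

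The one step you do differently is the control of $\mathcal{D}f^s$ on the translated balls. The paper requires $t=0$ to be a Lebesgue point of $t\mapsto \mathcal{D}f(\overline{B_\delta(p+tv)})$, which it extracts from Lemmas~\ref{ball_evaluation_measurable} and~\ref{lebesgue_point_line}. It then compares with the density estimate at radius $\delta$. You instead use $B_\delta(p+tv)\subset B_{2\delta}(p)$ for $|t|\le s\le \delta$ and $|v|_{euc}=1$. Combined with $\mathcal{D}f^s(v)\ge 0$ (or just the total variation $|\mathcal{D}f^s|$) and Lemma~\ref{singular_measure_lebesgue_point} at radius $2\delta$, this gives a bound that is uniform in $s\le\delta$ and needs no Lebesgue-point property along lines.

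The same containment also removes the coordination of $s$ and $\delta$ that you flag as the main difficulty. For $h\ge 0$ and $s\le\delta$,
\[
\int_{-s}^{s}\int_{B_\delta(p)} h(y+tv)\,\di\mathcal{L}^{d+1}(y)\,\di t \le 2s\int_{B_{2\delta}(p)} h\,\di\mathcal{L}^{d+1}.
\]
Taking $h=|k-k(p)|$, ordinary Lebesgue points of $k$ at $p$ already control the tube averages, so Lemma~\ref{Lebesguepointsadvanced_lor} becomes superfluous. If you do rely on that lemma, note that it is a limit $s\to 0$ at fixed $\delta$, so it does not cover your suggestion $s=\delta$. You would then have to fix $\delta$ first and take $s\le \delta$ small afterwards, as the paper does.

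One point to tighten: $|\di f|$ is not known a priori to be finite or locally integrable. Shrinking $A$ to $\{|\di f|\le k\}$ may leave a null set if $|\di f|=+\infty$ on $A$. Instead, take Lebesgue points of a bounded truncation such as $\min(|\di f|-G,\eta)$ together with $G$, as the paper does.
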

\begin{remark}
This proposition can be seen as a Lorentzian counterpart to \cite[Proposition 4.24]{mondino2025equivalence}. The proof essentially follows a similar strategy, with the additional difficulty that $f$ is not necessarily $C^1$ and hence the distributional derivative might be measure valued. In the Riemannian case, proving the analogue statement for $C^1$-functions sufficed to identify the synthetic Sobolev space as the classical one (see \cite[Section 4.4]{mondino2025equivalence}), as the theory from \cite{ambrosio2014inventio, ambrosio2014duke} allowed to pass from $C^1$ to all Sobolev functions.  
\end{remark}
Before giving the proof, we summarise the strategy: We start with verifying that $G$ is a weak subslope. Then, to prove maximality, we argue by contradiction and assume that $|\di f|>G$ on a set of positive measure: 
\begin{itemize}
    \item We pick a point $p$ in $\{|\di f|>G\}$ at which $f$ is differentiable and find a timelike vector $v \in T_pM$ such that $G(p)|v|_g =|\nabla_g f|_g(p) |v|_g \approx \langle\nabla_gf(p), v\rangle_g = (f \circ \gamma)'(0) = \lim_{h \searrow 0} \frac{f(p+hv)-f(p-hv)}{2h}$, where $\gamma\colon t \mapsto p+tv$. Recalling Definition \ref{def:subslope}, this suggests that $|\di f| \leq G$ along the curve $\gamma|_{[-h, h]}$ for small $h$.
    \item To make this idea rigorous, we build a test plan of short parallel curves in $v$-direction around $p$ and use Lemma \ref{singular_measure_lebesgue_point} to localise away from the support of the singular part $\mathcal{D}f^s$ of the distributional derivative of $\mathcal{D}f$ of $f$.
\end{itemize}
\begin{proof}[Proof of Proposition \ref{weak_subslope_identified}]
    We start by proving that $G$ is a weak subslope. Fix an arbitrary test plan $\boldsymbol{\pi} \in \mathcal{P}(\mathrm{LCC}([0,1], M))$. Moreover, let $N \subset M$ be a $\vol_g$-null set such that $f$ is differentiable or infinite on $M \setminus N$, which exists by Lemma \ref{bounded_variation_neighbourhoods}. 
    As in \cite[Lemma 3.5]{ryborz2025infinitesimal}, we have that for $\boldsymbol{\pi}$-almost every path $\gamma$, it holds that
    \begin{align}\label{transverse_curve}
        \mathcal{L}^1(\gamma^{-1}(N))=0.
    \end{align}
     Consider a path $\gamma \in \mathrm{LCC}([0,1], M)$ such that \eqref{transverse_curve} holds. We distinguish two cases. 

     \textbf{Case 1.} $f(\gamma([0,1]))\subset \R$. \\
     Now, $f$ is differentiable $\mathcal{H}^1$-almost everywhere on $\gamma([0,1])$ and the chain rule yields that for almost every $t \in [0,1]$, it holds
     \begin{align}
         (f \circ \gamma)'(t) = \di f_{\gamma_t}(\dot{\gamma_t}) \geq |\nabla_g f|_g (\gamma_t)|\dot{\gamma}_t|_g,
     \end{align}
     where the last inequality follows from the reverse Cauchy-Schwartz inequality. Note, that we can apply it because $\nabla_g f$ is zero or causal and past directed by Lemma \ref{causal_function_gradient} and $\dot{\gamma}_t$ is zero or causal and future directed by Lemma \ref{derivative_causal_where_exists}. Now, as $f \circ \gamma\colon [0,1] \to \R$ is a monotone function, and $(f \circ \gamma)'(t)$ the absolutely continuous part of the distributional derivative of $f\circ \gamma$, this yields 
     \begin{align}
         f(\gamma_1)-f(\gamma_0) \geq \int_0^1 (f \circ \gamma)'(t) \, \di t \geq \int_0^1 |\nabla f|_g(\gamma_t) |\dot{\gamma}_t|_g \, \di t = \int_0^1 G(\gamma_t) |\dot{\gamma}_t| \, \di t.
     \end{align}

     \textbf{Case 2.} $f(\gamma([0,1])) \not \subset \R$. \\
     Then, as $f$ is causal, we get that $f(\gamma_0) = - \infty$ or $f(\gamma_1) = +\infty$. In both cases, we have that 
     \begin{align*}
         f(\gamma_1)-f(\gamma_0) = +\infty,
     \end{align*}
     hence $G$ is a weak subslope. 

     Now, we want to verify that $G$ is the maximal weak subslope. Suppose that was not the case. Then there exists an $\e > 0$ such that 
     \begin{align*}
         \vol_g(\{|\di f| \geq G+\e\}) >0.
     \end{align*}
     Denote $S:= \{|\di f| \geq G+\e\}$. Then, $G$ is finite at $\vol_g$-almost every point in $S$, hence, by the definition of $G$, $f$ is finite at almost every point of $S$.
     We may assume that $S$ lies inside one coordinate patch and will from now on work in $\R^{d+1}$.     
     Define $\Gamma$ to be a countable and dense set in $\partial B_1^{euc}(0) \subset \R^{d+1}$.
     There exists a $\zeta>1$ such that the set
     \begin{align}\label{bounded_gradient_domain}
         \mathcal{L}^{d+1}(\{|\nabla_g f|_g \leq \zeta\}\cap S)>0. 
     \end{align}
    By Lemma \ref{lebesgue_point_line}, Lemma \ref{singular_measure_lebesgue_point}, Lemma \ref{Lebesguepointsadvanced_lor}, and the Lebesgue differentiation theorem, we can choose a point $p \in S$ and an open neighbourhood $W$ of $p$ such that 
    \begin{itemize}
        \item $p$ is a $\mathcal{L}^{d+1}$-density point of $S \cap \{|\nabla_g f|_g \leq \zeta\}$ and a Lebesgue point of $G, \di f, \min(|\di f|-G, \e)$.
        \item  $f$ is of bounded variation and $f$ is differentiable almost everywhere in $W$ (see Lemma \ref{bounded_variation_neighbourhoods} and \cite{ambrosio2000functions}).
        \item $W$ is relatively compact and $W \subset \{|f|< \infty\}$ (see Lemma \ref{local_finiteness}).
        \item there exists a $\theta>0$ such that for all $\delta\in (0, \theta)\cap \Q$, $\dot{\gamma} \in \theta \Gamma$, it holds that $t=0$ is a Lebesgue point of $F^k_{\dot{\gamma}, \delta, p}$ and $\Tilde{F}^k_{\dot{\gamma}, \delta, p}$, as they are defined in Lemma \ref{Lebesguepointsadvanced_lor} and \eqref{def_f_tilde_lebesgue_advanced} for $k = \di f, G, \min(|\di f|-G, \e)$. In the case of $\di f$ this shall be understood component wise.
        \item $\lim_{\delta\to 0} \frac{\mathcal{D}f^s(B_\delta(p))}{\delta^{d+1}} =0$ (see Lemma \ref{singular_measure_lebesgue_point}). 
        \item for every $\dot{\gamma} \in \Gamma$ and $ \delta \in (0,1)\cap \Q$ small enough, $t=0$ is a Lebesgue point of $L^{E_{\mathcal{D}f, \delta}}_{p,\dot{\gamma}}$ as defined in Lemma \ref{lebesgue_point_line} and Lemma \ref{ball_evaluation_measurable}.
    \end{itemize}
    Define
    \begin{align}\label{choice_of_epsilon}
        \epsilon := \frac{\e}{200(1+\zeta)}. 
    \end{align}
    Now, note that by Lemma \ref{causal_function_gradient}, $\nabla_g f(p) \in T_pM$ is causal and past directed or $0$. If  $\nabla_g f(p)\neq 0$, Lemma \ref{cauchy_schwartz_optimiser} yields that there exists a timelike and future directed vector $\Tilde{v} \in \R^{d+1} \cong T_pM$, such that $\frac{1}{2} \leq|\Tilde{v}|_{euc} \leq 2$ and
    \begin{align}
        \langle \nabla_g f, \Tilde{v} \rangle_g(p) \leq (\epsilon + |\nabla_g f|_g )|\Tilde{v}|_g(p).
    \end{align}
    Now, from our choice of $\Gamma$, we can approximate $\Tilde{v}|\Tilde{v}|_{euc}^{-1}$ to get a timelike and future directed vector $v \in \Gamma$ such that
    \begin{align}\label{almost_equality_vector_v}
        |\nabla_g f|_g |v|_g(p) \leq \langle \nabla_g f, v \rangle_g(p) \leq (3\epsilon + |\nabla_g f|_g) |v|_g(p).
    \end{align}
    If $\nabla_gf(p)=0$, then any timelike and future directed $v \in \Gamma$ will satisfy \eqref{almost_equality_vector_v}, so we may fix an arbitrary one.

    Using a linear transformation $A \in SO(d+1)$ and a translation, we may assume that $v= e_0$ and $p = 0$. Then, by the continuity of $g$, we have that $e_0$ is timelike and future directed in a neighbourhood of $p=0$, so by possibly shrinking $W \ni p=0$, we get that $e_0$ is timelike and future directed in $W$ and that there exists a $\lambda \in (0,1)$ such that for all $q \in W$, it holds
    \begin{align}\label{lambda_minimal_lor_length_e_0}
        |e_0|_{g(q)} \geq \lambda.
    \end{align}
    Now, we can choose $\delta, s \in (0,1)\cap \Q$ such that $B_{2\delta+2s}(0) \subset W$ and such that for $k \in \{G, \di f, \min(\e, |\di f|-G)\}$, it holds  
    \begin{align}
       &\frac{1}{\mathcal{L}^{d+1}(\overline{B_\delta(0)})}\int_{\overline{B_\delta(0)}} |k(y)-k(0)| \, \di \mathcal{L}^{d+1}(y) \leq \lambda \epsilon, \label{lebesgue_for_k} \\
       &\frac{1}{\mathcal{L}^{d+1}(\overline{B_\delta(0)})} \Bigg|\frac{1}{2s}\int_{-s}^s\int_{\overline{B_\delta(0)}} k(y+te_0)|e_0|_{g(y+te_0)} \, \di \mathcal{L}^{d+1}(y) \di t - \int_{\overline{B_\delta(0)}} k(y)|e_0|_{g(y)} \, \di \mathcal{L}^{d+1}(y) \Bigg| \leq \lambda \epsilon, \label{lebesgue_advanced_for_k}
    \end{align}
as well as 
 \begin{align}
       &\frac{1}{\mathcal{L}^{d+1}(\overline{B_\delta(0)})}\int_{\overline{B_\delta(0)}} |\langle \nabla_g f, e_0\rangle_g(y)-\langle \nabla_g f, e_0\rangle_g(0)| \, \di \mathcal{L}^{d+1}(y) \leq \lambda \epsilon, \label{lebesgue_inner_prod} \\
       &\frac{1}{\mathcal{L}^{d+1}(\overline{B_\delta(0)})} \Bigg|\frac{1}{2s}\int_{-s}^s\int_{\overline{B_\delta(0)}} \langle \nabla_g f, e_0\rangle_g(y+te_0) \, \di \mathcal{L}^{d+1}(y) \di t - \int_{\overline{B_\delta(0)}} \langle \nabla_g f, e_0\rangle_g(y) \, \di \mathcal{L}^{d+1}(y) \Bigg| \leq \lambda \epsilon, \label{lebesgue_inner_prod_advanced}
    \end{align}
and  
\begin{align}\label{lebesgue_assumption_for_dfs}
    \frac{|\mathcal{D}f^s(\overline{B_\delta(0)})|}{\mathcal{L}^{d+1}(\overline{B_\delta(0)})} \leq \lambda
    \epsilon\  \mathrm{and}\ \frac{1}{\mathcal{L}^{d+1}(\overline{B_\delta(0)})}\int_{-s}^s \int_{\overline{B_\delta(0+se_0)}} \, \di \mathcal{D}f^s(e_0)\,  \di t \leq 2s\lambda\epsilon. 
\end{align}
For $y \in \overline{B_\delta(p)} = \overline{B_\delta(0)}$ define the causal curve 
\begin{align}
    \gamma^y\colon  [0,1] \to W, t \mapsto y + 2s \Big(t-\frac{1}{2}\Big)  v = y + 2s \Big(t-\frac{1}{2}\Big)e_0.
\end{align}
Now, define the test plan $\boldsymbol{\pi}$ via 
\begin{align}
    \di \boldsymbol{\pi}(\gamma) := \left\{ \begin{array}{ll}
       \frac{1}{\mathcal{L}^{d+1}(\overline{B_\delta(0)})} \di \mathcal{L}^{d+1}(y)   & \mathrm{if}\ \gamma = \gamma^y\ \mathrm{for\ some\ } y \in \overline{B_\delta(0)},  \\
       0  & \mathrm{otherwise}. 
    \end{array} \right.
\end{align}
Using Corollary \ref{causal_speed_computed}, we get that 
\begin{align}\label{evaluate_test_plan_at_G}
    \int \int_0^1 G(\gamma_t)|\dot{\gamma}_t| \, \di t \di \boldsymbol{\pi}(\gamma) &= \frac{1}{\mathcal{L}^{d+1}(\overline{B_\delta(0)})} \int_{\overline{B_\delta(0)}}  \int_0^1 |\nabla_g f|_g\Big(y +2s \Big(t-\frac{1}{2}\Big)  e_0 \Big) 2s|e_0|_g \, \di t \di \mathcal{L}^{d+1}(y)\nonumber \\
    &= \frac{1}{\mathcal{L}^{d+1}(\overline{B_\delta(0)})} \int_{\overline{B_\delta(0)}}  \int_{-s}^s |\nabla_g f|_g(y +te_0)|e_0|_g \, \di t \di \mathcal{L}^{d+1}(y).
\end{align}
Using \eqref{almost_equality_vector_v} and \eqref{lambda_minimal_lor_length_e_0}, combined with \eqref{lebesgue_for_k} - \eqref{lebesgue_inner_prod_advanced} with $k = G$ we get that 
\begin{align}\label{cauchy_schwartz_lebesgue_advanced}
    \frac{1}{\mathcal{L}^{d+1}(\overline{B_\delta(0)})} \Bigg|\int_{\overline{B_\delta(0)}}  \int_{-s}^s |\nabla_g f|_g(y +te_0)|e_0|_g \, \di t \di \mathcal{L}^{d+1}(y) &-\int_{\overline{B_\delta(0)}}\int_{-s}^s   \langle \nabla_g f, e_0\rangle_g(y +te_0) \, \di t \di \mathcal{L}^{d+1}(y)\Bigg|\nonumber \\
    &\leq 2s(4\lambda + 3|e_0|_{g(0)})\epsilon.
\end{align}
Using furthermore \eqref{lebesgue_for_k}, \eqref{lambda_minimal_lor_length_e_0}, and \eqref{lebesgue_advanced_for_k} with $k \in \{\min(\e, |\di f|-G), G\}$ yields
\begin{align}\label{contradiction_assumption_lebesgue_advanced}
    \frac{1}{\mathcal{L}^{d+1}(\overline{B_\delta(0)})} \int_{\overline{B_\delta(0)}}  \int_{-s}^s \big(|\di f|-|\nabla_g f|_g(y +te_0)\big)|e_0|_g \, \di t \di \mathcal{L}^{d+1}(y) \geq 2s(|e_0|_{g(0)}\e - 4\lambda\epsilon).
\end{align}
Recall that our choice of $W$ (see the discussion leading up to \eqref{lambda_minimal_lor_length_e_0}) allows us to apply Lemma \ref{fundamental_thm_of_calculus_for_measure}.
Now, Lemma \ref{identify_derivative_with_absolutely_continuous_part} together with Lemma \ref{fundamental_thm_of_calculus_for_measure} and \eqref{lebesgue_assumption_for_dfs} gives that 
\begin{align}\label{comparison_with_distributional_derivative}
    &\frac{1}{\mathcal{L}^{d+1}(\overline{B_\delta(0)})} \int_{\overline{B_\delta(0)}}  \int_{-s}^s \langle \nabla_g f, e_0\rangle_g(y +te_0) \, \di t \di \mathcal{L}^{d+1}(y) \nonumber \\
    &= \frac{1}{\mathcal{L}^{d+1}(\overline{B_\delta(0)})} \int_{-s}^s \int_{\overline{B_\delta(0)}}  \, \di \mathcal{D}f(e_0)(y +te_0) \di t - \frac{1}{\mathcal{L}^{d+1}(\overline{B_\delta(0)})} \int_{-s}^s \int_{\overline{B_\delta(p)}}  \, \di \mathcal{D}f^s(e_0)(y +te_0) \di t \nonumber \\
    &\geq \frac{1}{\mathcal{L}^{d+1}(\overline{B_\delta(0)})} \int_{\overline{B_\delta(0)}} f(y+se_0)-f(y-se_0) \, \di \mathcal{L}^{d+1}(y) -8 s\lambda \epsilon.  
\end{align}
Then, a similar computation as in \eqref{evaluate_test_plan_at_G} followed by the combination of \eqref{cauchy_schwartz_lebesgue_advanced}, \eqref{contradiction_assumption_lebesgue_advanced}, and \eqref{comparison_with_distributional_derivative}, yields that
\begin{align}
     &\int \int_0^1 |\di f|(\gamma_t)|\dot{\gamma}_t|\, \di t \di \boldsymbol{\pi}(\gamma) \\
     &=\frac{1}{\mathcal{L}^{d+1}(B_\delta(0))} \int_{\overline{B_\delta(0)}}  \int_{-s}^s |\di f|(y+se_0)|e_0|_{g(y+se_0)}\,\di t\di \mathcal{L}^{d+1}(y) \nonumber \\
    &\geq \frac{1}{\mathcal{L}^{d+1}(\overline{B_\delta(0)})} \int_{\overline{B_\delta(0)}} f(y+se_0)-f(y-se_0)\, \di \mathcal{L}^{d+1}(y) + 2s(|e_0|_{g(0)}\e - (8\lambda+3|e_0|_{g(0)})\epsilon) \nonumber\\
    &> \frac{1}{\mathcal{L}^{d+1}(\overline{B_\delta(0)})} \int_{\overline{B_\delta(0)}} f(y+se_0)-f(y-se_0)\, \di \mathcal{L}^{d+1}(y) =\int f(\gamma_1)-f(\gamma_0)\, \di \boldsymbol{\pi}(\gamma),
\end{align}
where the last step follows from \eqref{choice_of_epsilon}. Hence, $|\di f|$ is not a weak subslope. This is a contradiction and finishes the proof.
\end{proof}
This allows us to prove the main theorem:
\smallskip

\begin{theorem}[Theorem \ref{mainthm}]
    Let $M$ be a $(d+1)$-dimensional manifold and $g$ be a continuous Lorentzian metric on $M$ such that $(M, g)$ is a causally simple spacetime.
    Then the metric measure spacetime $(M, \ell_g, \vol_g)$ is infinitesimally Minkowskian. 
\end{theorem}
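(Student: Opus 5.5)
The plan is to reduce infinitesimal Minkowskianity to a pointwise parallelogram identity for the Lorentzian norm on covectors, using Proposition \ref{weak_subslope_identified}. Strong causality follows from causal simplicity by Lemma \ref{simple_to_strong_causal}, so the proposition applies to every causal function. First note that if $f$ and $h$ are causal, then $f+h$ and $2f+h$ are causal as well, with the infinity conventions. Hence all four maximal weak subslopes in \eqref{def:infinitesimal_minkowskianity} are given by the explicit formula $G$.

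\textbf{Case 1: all functions finite and differentiable.} Let $F$ be the set of points $p$ where $f$ and $h$ are both finite and differentiable. At such $p$, the functions $f+h$ and $2f+h$ are also finite and differentiable, with $\di(f+h)=\di f+\di h$ and $\di(2f+h)=2\di f+\di h$. By Proposition \ref{causal_function_gradient}, each of the gradients $\nabla_g f$, $\nabla_g h$, $\nabla_g(f+h)$, $\nabla_g(2f+h)$ is zero or causal past directed. Therefore $|\nabla_g u|_g^2=-g(\nabla_g u,\nabla_g u)$ for each of them, and this is a quadratic form $Q$ in $\di u$. Write $a=\di f$ and $b=\di h$. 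The parallelogram law for a quadratic form gives
\begin{align*}
2Q(a+b)+2Q(a)=Q(b)+Q(2a+b).
\end{align*}
To check it, expand both sides: each equals $4Q(a)+4B(a,b)+2Q(b)$, where $B$ is the associated bilinear form. So the identity holds pointwise on $F$.

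\textbf{Case 2: the complement $M\setminus F$.} Here I need both sides to equal $+\infty$ almost everywhere. Suppose $p\notin F$, so $f$ or $h$ is infinite or not differentiable at $p$. By Lemma \ref{bounded_variation_neighbourhoods}, up to a null set the only issue is infiniteness. If $f(p)=\pm\infty$, then $G_f(p)=+\infty$ and the left side is $+\infty$. The right side involves only $h$ and $2f+h$. If $h$ is finite at $p$, then $2f+h$ is infinite at $p$, so $G_{2f+h}(p)=+\infty$; otherwise $G_h(p)=+\infty$. Either way the right side is $+\infty$.

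If instead $h(p)$ is infinite and $f(p)$ is finite, then $f+h$ is infinite, so the left side is $+\infty$, and $G_h(p)=+\infty$, so the right side is $+\infty$. Non-differentiability on the finite set is handled by Lemma \ref{bounded_variation_neighbourhoods}, applied to $f$ and $h$, which makes it a null set.

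The main subtlety is the infinity conventions, for instance $+\infty-(+\infty)=+\infty$. These affect whether $f+h$ is well defined as a causal function where $f=+\infty$ and $h=-\infty$. I would check that the convention $\pm\infty+z=\pm\infty$ still yields an order-preserving function. If it does not, I would argue that these sets are $\vol_g$-negligible for the subslope identity, or note that $G=+\infty$ there regardless, since the sum is infinite. Finally, combining the two cases gives \eqref{def:infinitesimal_minkowskianity} $\vol_g$-almost everywhere.
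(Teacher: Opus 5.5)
Your proposal is correct and follows the paper's proof. You identify all four maximal weak subslopes through Proposition \ref{weak_subslope_identified}, and note that non-differentiability on $\{|f|<\infty\}\cap\{|h|<\infty\}$ is $\vol_g$-null by Lemma \ref{bounded_variation_neighbourhoods}. You then apply the parallelogram identity for the quadratic form $-g^{-1}(\cdot,\cdot)$ on differentials there, and check that both sides are $+\infty$ wherever $f$ or $h$ is infinite; the paper does the same, and your additional points (strong causality via Lemma \ref{simple_to_strong_causal}, nonnegativity of the radicand via Proposition \ref{causal_function_gradient}, and the case-by-case infinite bookkeeping) only spell out what the paper leaves implicit.
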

\begin{proof}
    Take two causal functions $f_1, f_2$ on $(M, \ell_g, \vol_g)$. Note that
    \begin{align*}
        \{|f_1+f_2|<\infty\}= \{|2f_1+f_2|<\infty\} = \{|f_1|<\infty\}\cap \{|f_2|<\infty\}.
    \end{align*}
    We aim to prove
    \begin{align}\label{final_minkowskianity}
        2|\di(f_1+f_2)|^2+ 2|\di f_1|^2 = |\di f_2|^2 + |\di (2f_1+f_2)|^2 \ \vol_g-\mathrm{a.e.}
    \end{align}
     Note furthermore that $f_1+f_2$ and $2f_1+f_2$ are differentiable wherever both $f_1$ and $f_2$ are. 
    Define
    \begin{align}
        N:= (\{|f_1|<\infty\}\cap \{|f_2|<\infty\})\setminus \{f_1 \ \mathrm{and}\ f_2\ \mathrm{are\ differentiable}\}.
    \end{align}
    Using that $N$ is a $\vol_g$-null set, Proposition \ref{weak_subslope_identified} yields that in $\{|f_1|<\infty\}\cap \{|f_2|<\infty\}$ it holds
    \begin{align}
        &2|\di(f_1+f_2)|^2+ 2|\di f_1|^2 = 2|\nabla_g(f_1+f_2)|_g^2+ 2|\nabla_g f_1|_g^2 \nonumber\\
        &= |\nabla_g f_2|_g^2 + |\nabla_g (2f_1+f_2)|_g^2=  |\di f_2|^2 + |\di (2f_1+f_2)|^2 \ \vol_g-\mathrm{a.e.}
    \end{align}
    Indeed, the middle equality follows from the fact that taking derivatives is linear and that $g$ is a bilinear form. In any other case, both sides of \eqref{final_minkowskianity} are infinite and hence equal. This finishes the proof.
\end{proof}

\phantomsection
\addcontentsline{toc}{section}{References}
\bibliography{bibliographie}
\bibliographystyle{abbrv}
\end{document}